\documentclass[11pt,reqno]{amsart}

\usepackage[T1]{fontenc}
\usepackage{lmodern}
\usepackage[utf8]{inputenc}
\usepackage{amsmath,amssymb,amsthm}
\usepackage{mathtools}
\usepackage{booktabs}
\usepackage{longtable}
\usepackage[margin=1.15in]{geometry}
\usepackage[colorlinks=true,linkcolor=blue,citecolor=blue,urlcolor=blue]{hyperref}

\usepackage[backend=biber,style=numeric,sorting=nyt,giveninits=true,
            maxbibnames=99,doi=true,isbn=false,url=false]{biblatex}
\DeclareSourcemap{
  \maps[datatype=bibtex]{
    \map{
      \step[fieldsource=mrnumber,fieldset=usera,origfieldval]
      \step[fieldset=mrnumber,null]
      \step[fieldset=mrclass,null]
      \step[fieldset=mrreviewer,null]
      \step[fieldset=fjournal,null]
    }
  }
}

\theoremstyle{plain}
\newtheorem{theorem}{Theorem}[section]
\newtheorem{proposition}[theorem]{Proposition}
\newtheorem{lemma}[theorem]{Lemma}
\newtheorem{corollary}[theorem]{Corollary}
\theoremstyle{definition}
\newtheorem{definition}[theorem]{Definition}
\newtheorem{remark}[theorem]{Remark}

\newtheorem{result}[theorem]{Result}

\DeclareMathOperator{\Tr}{Tr}

\newcommand{\F}{\mathbb{F}}
\newcommand{\N}{\mathbb{N}}
\newcommand{\Z}{\mathbb{Z}}
\newcommand{\R}{\mathbb{R}}
\newcommand{\C}{\mathbb{C}}
\newcommand{\Kx}{K^{\times}}

\newcommand{\muthree}{\mu_{3}}
\newcommand{\ReX}{}

\begin{document}

\title[Carlet's Kasami cyclic-additive conjecture]
      {Carlet's cyclic-additive conjecture for the Kasami monomials}

\author{G\'abor~P.~Nagy}
\email{nagyg@math.u-szeged.hu}
\address{Bolyai Institute, University of Szeged, Aradi V\'{e}rtan\'{u}k tere 1, H-6720 Szeged, Hungary}
\author{Douglas~S.~McNeil}
\email{dsm054@gmail.com}
\address{Unaffiliated}
\author{Attila~Vajda}
\email{at.vajda@gmail.com}
\address{Bolyai Institute, University of Szeged, Aradi V\'{e}rtan\'{u}k tere 1, H-6720 Szeged, Hungary}
\date{}

\begin{abstract}
Let $K$ be a finite field of characteristic two with $|K| = 2^{n}$, let
$\gcd(k,n) = 1$, let $d_{k} = 4^{k} - 2^{k} + 1$ be the Kasami exponent, and let
$\Delta_{k} = \{(b+1)^{d_{k}} + b^{d_{k}} + 1 : b \in K\}$ be the image of the
normalised derivative of the Kasami monomial in the direction $1$.  We show
that, for all distinct nonzero $v_{1},v_{2} \in K$,
\[
  \bigl|\{(x,y,z) \in \Delta_{k}^{3} :
     v_{1}x + v_{2}y + (v_{1}+v_{2})z = 0\}\bigr| = 2^{2n-3}.
\]
This establishes the cyclic-additive difference-set condition introduced by
Carlet and later posed for the Kasami functions at NSUCRYPTO~2019.  Starting
from the known half-size property of the derivative image, we express the
Fourier correction as twisted root counts and prove their required
nonnegativity by an incidence argument on the Fermat cubic.  An exact average
over the slopes then forces equality pointwise.  The argument covers every
admissible pair $(n,k)$ and has been formalised and machine-checked in Lean~4
with Mathlib.
\end{abstract}

\maketitle

\tableofcontents

\section{Introduction}

\subsection{Cyclic-additive difference sets}

Let $n \ge 2$ and let $K$ be a finite field with $|K| = Q = 2^{n}$. A \emph{cyclic difference set with Singer-like parameters} is a subset $\Delta \subseteq \Kx$ of size $2^{n-1}$ such that for all $v_{1}, v_{2} \in K$ with $v_{1}, v_{2} \neq 0$ and $v_{1} \neq v_{2}$,
\[
  \bigl|\{(x,y) \in \Delta^{2} : v_{1}x + v_{2}y  = 0\}\bigr| = 2^{n-2}.
\]
In general, showing that a subset $\Delta \subseteq K$ satisfies this property can be quite challenging; it often requires deep results from finite field theory, see \cite{DillonDobbertin}. Carlet generalized this notion to the componentwise setting (\cite[Definition~4.14]{Carlet2018}). A set $\Delta \subseteq K$ is a \emph{cyclic-additive difference set with Singer-like parameters} if
\begin{equation}\label{eq:carlet}
  \bigl|\{(x,y,z) \in \Delta^{3} :
    v_{1}x + v_{2}y + (v_{1}+v_{2})z = 0\}\bigr| = 2^{2n-3}.
\end{equation}
for all $v_{1}, v_{2} \in K$ with $v_{1}, v_{2} \neq 0$ and $v_{1} \neq v_{2}$. The value $2^{2n-3} = Q^{2}/8$ is what a random half of $K$ would give: the equation $v_{1}x + v_{2}y + (v_{1}+v_{2})z = 0$ is one linear condition on $\Delta^{3}$, of expected weight $|\Delta|^{3}/Q = Q^{2}/8$.  

Carlet introduced componentwise Walsh uniformity (CWU) to study the 
relationship between Walsh spectra and APN functions. He proved that every
quadratic APN function is CWU (\cite[Proposition~4.6]{Carlet2018}) and that
CWU implies APN-ness (\cite[Proposition~4.5]{Carlet2018}). Moreover, for a
power APN permutation $F$, CWU is equivalent to the condition that
\[
  \Delta_F = \{ F(x+1) + F(x) + 1 : x \in K \}
\]
is a cyclic-additive difference set with Singer-like parameters
(\cite[Proposition~4.10]{Carlet2018}).


\subsection{The conjecture}
  
The case of primary interest is the Kasami family of functions $F:K\to K$.  Fix $k$ with
$\gcd(k,n)=1$ and let
\[
  d_{k} = 4^{k} - 2^{k} + 1 = 2^{2k} - 2^{k} + 1
\]
be the \emph{Kasami exponent}, $F(x) = x^{d_{k}}$ the \emph{Kasami monomial.}  The
normalised derivative in the direction $1$ is
\[
  \delta_{k}(b) = (b+1)^{d_{k}} + b^{d_{k}} + 1 ,
\]
and $\Delta_{k} = \delta_{k}(K)$ is its image. Carlet verified \eqref{eq:carlet} computationally for all $1\leq k < n \le 11$ with $\gcd(n,k)=1$. He remarked that his proof for odd $n$ was ``not yet complete'' (\cite{cryptoeprint:2017/528}) and that even $n$ required a different method. He also asked for a direct proof (Conclusions of \cite{Carlet2018}). The problem was posed again at the Sixth International Olympiad in Cryptography (NSUCRYPTO 2019)~\cite{NSUCRYPTO}.
Nagy and Vajda~\cite{NagyVajda} proved \eqref{eq:carlet} for $k \bmod n \in \{1,2,n-2,n-1\}$ and verified it exhaustively for $n \le 13$.  Condition~\eqref{eq:carlet} also has a combinatorial reading, relating constancy of the three-variable solution count to constancy of the corresponding cubic Walsh correlation and to affine $3$-designs; see Tang~\cite{Tang}.  

\subsection{The result}

\begin{theorem}[Main theorem]\label{thm:main}
Let $K$ be a finite field of characteristic two with $|K| = 2^{n}$, let
$k \in \N$ satisfy $\gcd(k,n) = 1$, and let
$\Delta_{k} = \{\delta_{k}(b) : b \in K\}$ with
$\delta_{k}(b) = (b+1)^{d_{k}} + b^{d_{k}} + 1$ and
$d_{k} = 4^{k}-2^{k}+1$.  Then for all $v_{1},v_{2} \in K$ with
$v_{1} \neq 0$, $v_{2} \neq 0$, and $v_{1} \neq v_{2}$,
\[
  N_{k}(v_{1},v_{2}) :=
  \bigl|\{(x,y,z) \in \Delta_{k}^{3} :
     v_{1}x+v_{2}y+(v_{1}+v_{2})z=0\}\bigr| = 2^{2n-3}.
\]
\end{theorem}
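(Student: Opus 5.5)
Write $\chi(u)=(-1)^{\Tr(u)}$ and let $\mathbf 1_{\Delta}$ be the indicator of $\Delta=\Delta_k$. I would follow the abstract's outline: Fourier expansion, then a nonnegativity statement for each correction term, then an exact average over slopes. The starting input is the known half-size property: $\delta_k$ is $2$-to-$1$ onto $\Delta_k$, $|\Delta_k|=2^{n-1}$, and $\Delta_k$ lies in a hyperplane-like structure determined by the Kasami APN property.

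\textbf{Step 1: Fourier expansion.} Expanding the linear condition by characters gives
\[
N_k(v_1,v_2)=\frac1Q\sum_{a\in K}\widehat{\mathbf 1}_\Delta(av_1)\,\widehat{\mathbf 1}_\Delta(av_2)\,\widehat{\mathbf 1}_\Delta(a(v_1+v_2)),
\]
where $\widehat{\mathbf 1}_\Delta(u)=\sum_{x\in\Delta}\chi(ux)$. The $a=0$ term is $|\Delta|^3/Q=Q^2/8$. Because $\delta_k$ is $2$-to-$1$, I rewrite $\widehat{\mathbf 1}_\Delta(u)=\tfrac12\sum_b\chi(u\delta_k(b))$. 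This sum is a Walsh-type sum of the derivative of the Kasami function. It vanishes or has absolute value $2^{(n+1)/2}$ (or $2^{n/2+1}$ for even $n$), according to known Kasami Walsh spectra (Dillon–Dobbertin). The correction $E(v_1,v_2)=N_k-Q^2/8$ is therefore a sum over $a\neq0$ of triple products of these sums.

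\textbf{Step 2: rewrite the correction as twisted root counts.} Each product $\sum_{b,c,e}\chi(a(v_1\delta_k(b)+v_2\delta_k(c)+(v_1+v_2)\delta_k(e)))$ can be summed over $a$ first. This gives $Q$ times the number of $(b,c,e)$ with $v_1\delta_k(b)+v_2\delta_k(c)+(v_1+v_2)\delta_k(e)=0$. Only the bias relative to uniform matters, so I would reparametrise by $t=v_2/v_1\in K\setminus\{0,1\}$ (the slope). Using the Dillon–Dobbertin description of $\Delta_k$ via the substitution $b\mapsto$ solutions of $x^{2^k}+x=\ldots$, I would express $E$ as $c\cdot R(t)$ with $c>0$. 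Here $R(t)$ is a count of roots, twisted by a cube-root-of-unity character $\muthree$ (hence the $\gcd(3,\cdot)$ flavour of $d_k$), of an equation resembling $X^3+Y^3+Z^3=0$ over $K$ with coefficients depending on $t$.

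\textbf{Step 3: nonnegativity.} I would prove $R(t)\ge0$ for every admissible $t$. This is the main obstacle. The first thing I would try is an incidence argument: interpret each twisted term as an incidence between points of the Fermat cubic $X^3+Y^3+Z^3=0$ and lines determined by $t$. Then group the terms into orbits under the $\muthree^3$ and $S_3$ symmetries of the cubic. Within each orbit the twisted contributions would add up to a square or a nonnegative multiple of an orbit size. If that fails, the fallback is to write $R(t)$ as $\sum_\lambda|S_\lambda(t)|^2$ for suitable partial character sums.

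\textbf{Step 4: exact averaging.} Next I compute $\sum_{t\neq0,1}N_k(1,t)$ exactly. Summing over all $t$ turns the linear condition into a near-trivial count: for fixed $x,y,z$, the equation $x+ty+(1+t)z=0$ determines $t$ unless $y=z$. This gives a closed-form total from $|\Delta|$ and the count of solutions of $x=z$, $y=z$. Subtracting the excluded slopes $t=0,1$, whose counts are $|\Delta|^2$-type expressions, shows the average correction is exactly zero. Since every correction is $\ge0$ by Step 3 and they sum to zero, each vanishes. Scaling $(v_1,v_2)\mapsto(1,v_2/v_1)$ preserves $N_k$, so $N_k(v_1,v_2)=2^{2n-3}$ for all admissible pairs.
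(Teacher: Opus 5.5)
\textbf{Overall.} Your Steps 1 and 4 are sound. They coincide with the paper's Walsh triple-count formula, its exact slope average, and the forcing step. The gap is in Steps 2 and 3, which carry all the Kasami-specific content and are not actually carried out.

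\textbf{Step 2.} Summing over $a$ first merely undoes Step 1: it returns $Q$ times the number of triples $(b,c,e)$, so it cannot produce root counts. What is needed is a closed form for $S(a)=\widehat{\mathbf 1}_{\Delta}(a)$ itself.

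The paper first transports the problem, via $\delta_k(b)=\delta_{n-k}(b)^{2^{2k}}$ (which moves the coefficients, not the slope), to an odd parameter with $2^k+1=3m$ and $\gcd(m,2^n-1)=1$. It then uses the Dillon--Kashyap phase formula $\widehat F(\chi)=G(\chi)G(\chi^{2^k+1})/G(\chi^3)$ for the sign function of $\Delta_k$. Gauss-sum algebra gives $S=0$ off the cubes and $2S(A^3)=\sum_{u\in\muthree}\sum_{x\in K}\psi(x+uAx^m)$. Parseval plus a Weil-sum evaluation then yield, for $\rho=A^3$ and $1+\rho=B^3$,
\[
  Z(A^3)=\frac{Q^2}{8}\Bigl(\sum_{u,v\in\muthree}R_{u,v}(A,B)-c^2\Bigr),
  \qquad
  R_{u,v}(A,B)=\bigl|\{t\in K: ut^D+v(At+B)^D=1\}\bigr|,
\]
with $mD\equiv 1\pmod{2^n-1}$.

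So the correction is a root count minus a baseline, and the positivity you need is an \emph{existence} statement: for all $p,q\in\Kx$ with $p^3+q^3=1$ there are $w,z\in K$ with $w+z=1$ and $w^m+pz^m=q$. Your proposed form $c\cdot R(t)$ with $R(t)\ge 0$ does not identify this; a genuine count would be trivially nonnegative. Nothing in your sketch shows how the Dillon--Dobbertin description would lead there.

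\textbf{Step 3.} This step gives no mechanism. Orbit-grouping under the symmetries of the cubic and a sum-of-squares rewriting are guesses. Moreover $Z(\rho)$, a sum of triple products of real Walsh values, has no visible positivity structure.

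In the paper the roots come from the group law on the Fermat cubic $W^3+T^3=1$. For $\mathcal Q=(W,T)$, the Hessian data $(N_x:N_y:D_0)$ of $\mathcal Q\boxplus\pi^k\mathcal Q$ satisfy $W^{2^k+1}N_y+N_xT^{2^k+1}=D_0$. Hence if $(p,q)=\boxminus(\mathcal Q\boxplus\pi^k\mathcal Q)\boxplus T_3$, then $(w,z)=(W^3,T^3)$ is a root. One must then hit every target by some $\mathcal Q$ with $W^3,T^3\in K$:
\begin{itemize}
\item for odd $n$, this holds because $1+\pi^k$ has trivial kernel on $E(K)$;
\item for even $n$, one factors $1+\pi^k=(1+\pi)(a-b\pi)$ in $\Z[\pi]$ with $\pi^2=[-2]$, inverts $a-b\pi$ on $E(K)$ (its norm $m$ is prime to the annihilator $(-2)^{n/2}-1$), and lifts through $1+\pi$ by explicit coordinates.
\end{itemize}
Without this or an equivalent argument, Step 3 --- the whole difficulty --- remains open.

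\textbf{Smaller points.}
\begin{itemize}
\item The statement allows any $k$ coprime to $n$, so you need the reduction $k\mapsto k\bmod n$ before invoking half-size.
\item Your claim that $\widehat{\mathbf 1}_\Delta$ takes only the values $0,\pm2^{(n+1)/2}$ (or $\pm2^{n/2+1}$) confuses the Walsh spectrum of $x^{d_k}$ with that of its derivative image. For $k=1$, $\Delta_1=\{b^2+b\}$ is the trace-zero hyperplane and $\widehat{\mathbf 1}_\Delta(1)=Q/2$, which is larger for $n\ge 5$.
\end{itemize}
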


No lower bound on $n$ and no range restriction on $k$ are assumed.  Both are
consequences of the remaining hypotheses: two distinct nonzero coefficients
force $|K| \ge 3$, hence $n \ge 2$, and coprimality then makes $k \bmod n$ a
positive representative below $n$; Section~\ref{sec:parameters} shows that
$\Delta_{k}$ and $N_{k}$ are unchanged when $k$ is replaced by $k \bmod n$.

Theorem~\ref{thm:main} has been formalised and machine-checked in Lean~4 on
top of Mathlib; see the Palomar entry~\cite{palomar-2026-09-04-000005-v1}.

\subsection{Outline of the proof}

Write $Q = |K| = 2^{n}$.  The argument has two halves, an analytic one
(Steps~2--3) and a geometric one (Step~4), linked by the half-size property
$2|\Delta_{k}| = Q$ of Section~\ref{sec:defs}.

\emph{Step 1 (Sections~\ref{sec:defs}--\ref{sec:parameters}): set up the
statement and reduce the parameter.}  Section~\ref{sec:defs} fixes the
notation, recasts the target count in slope form, and supplies the half-size
property of the Kasami derivative image.  Section~\ref{sec:parameters} shows
that reducing $k$ modulo $n$ leaves the image and count unchanged, and relates
the complementary parameters $k$ and $n-k$ by Frobenius.  Thus we may work
with $1 \le k < n$, while retaining the Frobenius transport needed later.

\emph{Step 2 (Section~\ref{sec:counting}): two elementary consequences of
half-size.}  Dividing the coefficient equation by $v_{1}$ turns
Theorem~\ref{thm:main} into a statement about the \emph{slope counts}
\[
  N(\rho) = \bigl|\{(x,y,z) \in \Delta^{3}: x + \rho y + (1+\rho) z = 0\}\bigr|,
  \qquad \rho \notin \{0,1\}.
\]
Additive-character orthogonality gives, for any $\Delta$ with
$2|\Delta| = Q$,
\[
  N(\rho) = \frac{Q^{2}}{8} + \frac{\ReX Z(\rho)}{Q},
  \qquad
  Z(\rho) = \sum_{\lambda \in \Kx}
     S(\lambda)\,S(\rho\lambda)\,S\bigl((1+\rho)\lambda\bigr),
\]
where $S=\widehat{\mathbf{1}_{\Delta}}$ is the Walsh transform of $\Delta$. A direct double count over
the pairs (slope, triple) gives the \emph{exact} average
\[
  \frac{1}{|\mathcal S|}\sum_{\rho \in \mathcal S} N(\rho) = \frac{Q^{2}}{8},
  \qquad \mathcal S = K \setminus \{0,1\}.
\]

\emph{Step 3 (Sections~\ref{sec:normalization}--\ref{sec:rootcount}): the
phase correction.}  After transporting the problem by Frobenius to an odd
parameter $k_{0}$ with $2^{k_{0}}+1 = 3m$ and $\gcd(m,2^{n}-1)=1$, the
half-size identity together with Gauss-sum algebra expresses the Walsh transform $S$ through Gauss sums. These turn the analytic quantity
$Z(\rho)$ into a count of solutions of an algebraic equation.  Writing the two
slopes as cubes, $\rho = A^{3}$ and $1+\rho = B^{3}$, and letting $D$ be an
inverse of $m$ modulo $2^{n}-1$, Theorem~\ref{thm:phaseroot} (\emph{phase-to-root-count identity}) states
\[
  Z(A^{3})
   = \frac{Q^{2}}{8}\Bigl(\sum_{u,v \in \muthree} R_{u,v}(A,B) - c^{2}\Bigr),
  \qquad
  R_{u,v}(A,B)
   = \bigl|\{t \in K: u\,t^{D} + v\,(At+B)^{D} = 1\}\bigr| ,
\]
where $\muthree \subseteq \Kx$ is the group of cube roots of unity and
$c = |\muthree| \in \{1,3\}$.  Its engine is Lemma~\ref{lem:weil}
(\emph{Weil sum evaluation}).  
In particular $Z(\rho)$ is a nonnegative real number as soon as each of the
$c^{2}$ root counts $R_{u,v}(A,B)$ is at least $1$, and a short rescaling
shows that this is guaranteed by one single arithmetic condition on the
exponent $m$, which we call $\mathrm{RootEqSolvable}(m,K)$:
\[
  \text{for all } p,q \in \Kx \text{ with } p^{3}+q^{3}=1
  \text{ there exist } w,z \in K \text{ with } w+z = 1
  \text{ and } w^{m}+p\,z^{m} = q .
\]
This condition is the only geometric input of the whole proof.

\emph{Step 4 (Section~\ref{sec:geometry}): geometry produces the roots.}  The
root equation is an incidence statement on the Fermat cubic
$E : W^{3}+T^{3}=1$ in characteristic two, for the map
$\Phi_{k}(\mathcal{Q}) = \boxminus(\mathcal{Q} \boxplus \pi^{k}\mathcal{Q}) \boxplus T_{3}$,
where $\pi$ is the Frobenius
endomorphism and $T_{3}$ a distinguished $3$-torsion point.  For odd $n$ the
rational kernel of $1+\pi^{k}$ is trivial, so $\Phi_{k}$ is bijective on the
finite group $E(K)$.  For even $n$ we factor $1+\pi^{k} = (1+\pi)G$ inside
$\Z[\pi] \cong \Z[\sqrt{-2}]$, invert the prime-to-three factor $G$ over $K$,
and lift through the degree-three Hessian quotient using explicit coordinates.
Either way every twisted fibre contains a root, whence $\ReX Z(\rho) \ge 0$ and
\[
  N(\rho) \ge \frac{Q^{2}}{8} \qquad \text{for every admissible slope.}
\]

\emph{Step 5 (Section~\ref{sec:assembly}): rigidity.}  Every slope count is at
least $Q^{2}/8$ and their average is exactly $Q^{2}/8$; hence all of them equal
$Q^{2}/8 = 2^{2n-3}$.  Transporting back to the original coefficients and to
the original parameter $k$ finishes the proof.

\subsection{Relation to earlier work}

The immediate antecedent is Nagy--Vajda~\cite{NagyVajda}.  The Frobenius
transport between complementary parameters, the reduction of the
character-sum correction to root counts, the strategy of a pointwise lower
bound combined with an exact average, and the reading of the root equation on
the Fermat cubic (their Remark~11.13, for $k=2$) all originate there; the
present proof extends the last of these to every admissible $k$.

The Kasami exponent and its spectral theory go
back to Kasami~\cite{Kasami} and Dobbertin~\cite{DobbertinKasami}; the
exceptional polynomials $M_{k}$ to Müller~\cite{Muller} and
Cohen--Matthews~\cite{CohenMatthews}, with the connection to the Kasami
derivative in Dobbertin~\cite{DobbertinNATO} and
Dillon--Dobbertin~\cite{DillonDobbertin}; the phase formula to Dillon and
Kashyap~\cite{DillonKashyap}; the moment-to-root-count conversion to the Weil
sum tradition of Helleseth~\cite{Helleseth} and Katz~\cite{Katz}; and the
Hessian addition law to Cauchy and Sylvester, see
Chudnovsky--Chudnovsky~\cite{Chudnovsky} and, for the degree-three Hessian
isogeny, \cite{TwistedHessian,HessianIsogenies}.

\section{Notation, definitions, and preliminary results}\label{sec:defs}

Throughout, $K$ is a finite field of characteristic two with
\[
  Q = |K| = 2^{n}, \qquad N = |\Kx| = Q-1 .
\]
All characters take values in $\C$.  We fix once and for all a primitive
additive character
\[
  \psi \colon (K,+) \to \C^{\times},
\]
concretely $\psi(x) = (-1)^{\Tr_{K/\F_{2}}(x)}$.  Since
$\operatorname{char} K = 2$ we have $\psi(x)^{2} = \psi(2x) = 1$, so $\psi$
takes only the values $\pm 1$, and $\psi(x^{2}) = \psi(x)$ because the trace is
Frobenius invariant.  For a multiplicative character
$\chi \colon \Kx \to \C^{\times}$, extended by $\chi(0)=0$, the Gauss sum is
\[
  G(\chi) = \sum_{x \in \Kx} \chi(x)\psi(x).
\]

\begin{definition}[The four statement-level objects]\label{def:objects}
For $k \in \N$ put
\[
  d_{k} = 4^{k} - 2^{k} + 1 \in \N,
  \qquad
  \delta_{k}(b) = (b+1)^{d_{k}} + b^{d_{k}} + 1 \quad (b \in K),
\]
\[
  \Delta_{k} = \delta_{k}(K) \subseteq K,
  \qquad
  N_{k}(v_{1},v_{2}) =
    \bigl|\{(x,y,z) \in \Delta_{k}^{3} :
        v_{1}x+v_{2}y+(v_{1}+v_{2})z = 0\}\bigr| .
\]
\end{definition}

\begin{definition}[Slopes]\label{def:slopes}
For $\rho \in K$ set
\[
  N(\rho) = \bigl|\{(x,y,z) \in \Delta_{k}^{3} :
     x + \rho y + (1+\rho)z = 0\}\bigr|.
\]
A slope $\rho$ is \emph{admissible} if
$\rho \neq 0$ and $\rho \neq 1$; we write
$\mathcal S = \{ \rho \in K : \rho \neq 0,\ \rho \neq 1\}$.  The \emph{Walsh coefficient} of
$\Delta \subseteq K$ is
\[
  S(a) = \widehat{\mathbf{1}_{\Delta}}(a) = \sum_{x \in \Delta} \psi(ax) \qquad (a \in K),
\]
and the \emph{phase triple sum} at a pair
$(\rho,\sigma)$ is
\[
  Z(\rho) = \sum_{\lambda \in \Kx}
     S(\lambda) S(\rho\lambda) S(\sigma\lambda),
  \qquad \sigma = 1+\rho.
\]
\end{definition}

The passage between the coefficient form and the slope form is pure
bookkeeping.

\begin{lemma}[Coefficient form versus slope form]\label{lem:coefslope}
If $v_{1} \neq 0$ then $N_{k}(v_{1},v_{2}) = N(v_{2}/v_{1})$.  If moreover
$v_{2} \neq 0$ and $v_{1} \neq v_{2}$, then $\rho = v_{2}/v_{1}$ is admissible.
Conversely, if $N(\rho) = c$ for every admissible $\rho$, then
$N_{k}(v_{1},v_{2}) = c$ for all admissible coefficient pairs.
\end{lemma}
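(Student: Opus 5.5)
The plan is to prove the first claim by exhibiting an explicit bijection between the two solution sets, and then read off the other two claims directly. Fix $v_{1} \neq 0$ and put $\rho = v_{2}/v_{1}$. For $(x,y,z) \in \Delta_{k}^{3}$, multiplying the equation $x + \rho y + (1+\rho) z = 0$ by $v_{1}$ gives
\[
  v_{1}x + v_{2}y + (v_{1}+v_{2})z = 0,
\]
because $v_{1}\rho = v_{2}$ and $v_{1}(1+\rho) = v_{1}+v_{2}$. Since $v_{1}$ is invertible in $K$, this multiplication is reversible: dividing the coefficient equation by $v_{1}$ recovers the slope equation. So the two conditions define the same subset of $\Delta_{k}^{3}$. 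The identity map on triples is therefore the required bijection, and $N_{k}(v_{1},v_{2}) = N(v_{2}/v_{1})$. No property of $\Delta_{k}$ is used here, so the equality holds for any subset of $K$.

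For admissibility, assume in addition that $v_{2} \neq 0$ and $v_{1} \neq v_{2}$. Then $\rho = v_{2}/v_{1} \neq 0$, because $v_{2} \neq 0$. Also $\rho \neq 1$, because $\rho = 1$ would force $v_{2} = v_{1}$. Hence $\rho \in \mathcal S$.

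The converse follows by combining the two previous steps. Take an admissible coefficient pair, meaning $v_{1}, v_{2} \neq 0$ and $v_{1} \neq v_{2}$. The second step shows that $\rho = v_{2}/v_{1}$ is admissible, so by hypothesis $N(\rho) = c$. The first step then gives $N_{k}(v_{1},v_{2}) = N(\rho) = c$.

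None of these steps is a real obstacle; the statement is bookkeeping. The only point needing care is the direction of the quotient. The slope is normalised so that $x$ has coefficient $1$, so we must divide by $v_{1}$, not by $v_{2}$. With that choice the coefficient of $z$ comes out exactly as $1+\rho$.
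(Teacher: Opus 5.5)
Your proposal is correct and follows essentially the same route as the paper: dividing (or multiplying) the equation by $v_{1} \neq 0$ shows the two filtered subsets of $\Delta_{k}^{3}$ coincide, and admissibility of $\rho = v_{2}/v_{1}$ follows from $v_{2} \neq 0$ and $v_{1} \neq v_{2}$. You also spell out the converse by combining these two steps, which the paper leaves implicit.
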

\begin{proof}
Dividing $v_{1}x+v_{2}y+(v_{1}+v_{2})z = 0$ by $v_{1}$ gives
$x + \rho y + (1+\rho)z = 0$ with $\rho = v_{2}/v_{1}$, and multiplication by
$v_{1} \neq 0$ is a bijection of the solution set of one equation onto the
other; the two filtered subsets of $\Delta_{k}^{3}$ are literally equal.  If
$v_{2} \neq 0$ then $\rho \neq 0$, and $v_{1} \neq v_{2}$ gives $\rho \neq 1$.
\end{proof}

We record the arithmetic identity that turns the analytic main term
into the integer of Theorem~\ref{thm:main}: if $|K| = 2^{n}$ with $n \ge 2$
then
\begin{equation}\label{eq:mainterm}
  \frac{Q^{2}}{8} = 2^{2n-3};
\end{equation}
note $2n-3 \ge 1$ exactly because
$n\ge 2$.

With the statement-level objects in place, the first Kasami-specific input
is the APN property of the Kasami monomial. In direction $1$, its derivative
has fibres of size two: any preimage $b$ is paired with $b+1$. Since
$\delta_k(b)$ is that derivative plus $1$, it has the same fibres. Thus its
image has half the field's elements:

\begin{result}[Half-size]\label{res:halfsize}
Let $|K| = 2^{n}$, $1 \le k < n$, $\gcd(k,n) = 1$.  Then
\[
  2\,|\Delta_{k}| = |K| .
\]
\end{result}
\begin{proof}
This is the half-size consequence of \cite[Theorem A]{DillonDobbertin}; see
\cite[Corollary 2]{DobbertinNATO} for a direct proof of APN-ness.
\end{proof}

We also record, for later use, the elementary fact
\begin{equation}\label{eq:zeroinDelta}
  0 = \delta_{k}(0) \in \Delta_{k}.
\end{equation}

\section{Parameter reduction and Frobenius transport}\label{sec:parameters}

\subsection{Reduction of the parameter modulo the degree}\label{subsec:reduction}

The literature statement imposes only $\gcd(k,n)=1$.  The substantive proof is
carried out for $1 \le k < n$.  

\begin{lemma}\label{lem:expperiod}
For all $n,k$ one has $d_{k} \equiv d_{k \bmod n} \pmod{2^{n}-1}$.
\end{lemma}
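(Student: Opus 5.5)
The plan is to reduce everything to the fact that $2^{j} \bmod (2^{n}-1)$ depends only on $j \bmod n$. Since $2^{n} \equiv 1 \pmod{2^{n}-1}$, for any $j \ge 0$ we may write $j = qn + r$ with $r = j \bmod n$. Then
\[
  2^{j} = (2^{n})^{q}\, 2^{r} \equiv 2^{r} \pmod{2^{n}-1}.
\]
Write $r = k \bmod n$ and apply this congruence twice: with $j = k$ it gives $2^{k} \equiv 2^{r}$, and with $j = 2k$ it gives $4^{k} = 2^{2k} \equiv 2^{2r} = 4^{r}$. The second instance uses that $2k \bmod n = 2r \bmod n$.

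Next I combine the two congruences in the ring $\Z/(2^{n}-1)\Z$:
\[
  d_{k} = 4^{k} - 2^{k} + 1 \equiv 4^{r} - 2^{r} + 1 = d_{r}.
\]
Since $d_{k}$ is a natural number (as $4^{k} \ge 2^{k}$), this is an honest congruence of integers, and the subtraction causes no trouble. The degenerate case $n = 1$ is trivial because the modulus is $1$. When $n = 0$ the expression $k \bmod n$ equals $k$ under the usual convention, so the statement is again trivial; I would just note this in passing.

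No step is a genuine obstacle. The only point needing care is bookkeeping: treating $d_{k}$ as an integer while handling the subtraction modulo $2^{n}-1$, and covering the edge cases $n \in \{0,1\}$ so that the statement holds \emph{for all} $n,k$ as claimed. A cleaner alternative is to use $(2^{n}-1) \mid (2^{qn}-1)$ directly: then $2^{k} - 2^{r} = 2^{r}(2^{qn}-1)$ is divisible by $2^{n}-1$, and similarly for $4^{k} - 4^{r}$. Subtracting these two divisibilities yields $d_{k} - d_{r} \equiv 0$ without any modular-ring language.

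The lemma will be used via $x^{d_{k}} = x^{d_{k \bmod n}}$ for $x \in \Kx$, since $x^{2^{n}-1} = 1$, together with $0^{d} = 0$ for $d \ge 1$. From this it follows that $\delta_{k} = \delta_{k \bmod n}$ and hence $\Delta_{k}$ and $N_{k}$ are unchanged, but that consequence belongs to the next statement rather than this one.
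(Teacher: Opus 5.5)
Your proof is correct and follows essentially the same route as the paper: reduce $2^{k}$ modulo $2^{n}-1$ using $2^{n}\equiv 1$, obtain the congruence for $4^{k}$ (the paper squares, where you reapply the reduction with $j=2k$), and note that $4^{j}\ge 2^{j}$ makes the natural-number subtraction in $d_{k}$ harmless. Your remarks on the degenerate moduli $n\in\{0,1\}$ go slightly beyond the paper's proof but change nothing.
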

\begin{proof}
Write $k = (k \bmod n) + n\lfloor k/n\rfloor$.  Since $2^{n} \equiv 1
\pmod{2^{n}-1}$ we get $2^{k} \equiv 2^{k \bmod n}$ and, squaring,
$4^{k} \equiv 4^{k\bmod n} \pmod{2^{n}-1}$.  As $2^{j} \le 4^{j}$ for all $j$,
the truncated differences also agree modulo $2^{n}-1$, and adding $1$
preserves the congruence.
\end{proof}

\begin{lemma}\label{lem:powperiod}
Let $x \in K$ and let $a,b \ge 1$ with $a \equiv b \pmod{|K|-1}$.  Then
$x^{a} = x^{b}$.
\end{lemma}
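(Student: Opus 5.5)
The plan is to reduce the claim to a statement about the finite multiplicative group $\Kx$. First I split off the case $x = 0$. Since $a,b \ge 1$, we have $0^{a} = 0 = 0^{b}$, and this is exactly why the hypothesis $a,b\ge 1$ is needed: with exponent $0$ the convention $0^{0}=1$ would break the identity.

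For $x \neq 0$, the element $x$ lies in $\Kx$, a group of order $|K|-1$. By Lagrange's theorem, $x^{|K|-1} = 1$. Without loss of generality $a \ge b$, and the congruence gives $a = b + j(|K|-1)$ for some natural number $j$. Then
\[
  x^{a} = x^{b}\bigl(x^{|K|-1}\bigr)^{j} = x^{b}.
\]
The symmetric case $b \ge a$ is identical, so working with natural-number exponents, as here, causes no difficulty.

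There is no real obstacle. The only point requiring care is the case split at $x=0$, together with writing the congruence as an explicit natural-number difference, so that no negative exponents appear. This matters in particular for the formalised version.

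This lemma is then used together with Lemma~\ref{lem:expperiod}. Both $d_{k}$ and $d_{k \bmod n}$ are at least $1$: the quantity $4^{j}-2^{j}+1 \ge 1$ holds for every $j$ because $2^{j}\le 4^{j}$. Hence $x^{d_{k}} = x^{d_{k\bmod n}}$ for all $x\in K$, and therefore $\delta_{k} = \delta_{k \bmod n}$ and $\Delta_{k} = \Delta_{k\bmod n}$.
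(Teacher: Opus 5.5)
Your proof is correct and follows the paper's argument: the case $x=0$ is settled by $a,b\ge 1$, and for $x\neq 0$ you use $x^{|K|-1}=1$ in $\Kx$. The only cosmetic difference is that you write $a=b+j(|K|-1)$ directly, whereas the paper splits each exponent as a remainder plus a multiple of $|K|-1$.
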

\begin{proof}
For $x = 0$ both sides vanish because $a,b \ge 1$.  For $x \neq 0$ use
$x^{|K|-1} = 1$ and split each exponent as remainder plus multiple of $|K|-1$.
\end{proof}

\begin{proposition}\label{prop:reduction}
Let $|K| = 2^{n}$.  Then for every $k$,
\[
  \delta_{k} = \delta_{k \bmod n}, \qquad
  \Delta_{k} = \Delta_{k \bmod n}, \qquad
  N_{k}(v_{1},v_{2}) = N_{k \bmod n}(v_{1},v_{2}).
\]
Moreover, if $|K|=2^{n}$ and $K$ contains two distinct nonzero elements
$v_{1} \neq v_{2}$, then $n \ge 2$; and if $n \ge 2$ and $\gcd(k,n)=1$ then
$r = k \bmod n$ satisfies $1 \le r < n$ and $\gcd(r,n)=1$.
\end{proposition}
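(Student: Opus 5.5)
The plan is to prove the three equalities in order, each following from the previous one, and then to handle the two parameter claims separately. Let $r = k \bmod n$. For the first equality, fix $b \in K$. Lemma~\ref{lem:expperiod} gives $d_{k} \equiv d_{r} \pmod{2^{n}-1}$, and $|K|-1 = 2^{n}-1$. To apply Lemma~\ref{lem:powperiod} to $x = b$ and to $x = b+1$ we need both exponents to be at least $1$.

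This is the only small point to check. Since $4^{j} \ge 2^{j}$, we have $d_{j} = 4^{j}-2^{j}+1 \ge 1$ for every $j \in \N$, including $j = 0$ (where $d_{0}=1$). Hence $(b+1)^{d_{k}} = (b+1)^{d_{r}}$ and $b^{d_{k}} = b^{d_{r}}$, so $\delta_{k}(b) = \delta_{r}(b)$ pointwise. Equality of the images $\Delta_{k} = \Delta_{r}$ follows immediately. Since $N_{k}(v_{1},v_{2})$ depends on $k$ only through the set $\Delta_{k}$, the filtered subsets of $\Delta_{k}^{3} = \Delta_{r}^{3}$ coincide, and so $N_{k} = N_{r}$.

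For the second claim, suppose $v_{1} \neq v_{2}$ are nonzero elements of $K$. Together with $0$ they give three distinct elements, so $2^{n} = |K| \ge 3$, and therefore $n \ge 2$.

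For the last claim, assume $n \ge 2$ and $\gcd(k,n)=1$. We have $r < n$ because $n > 0$. Also $\gcd(r,n) = \gcd(k,n) = 1$, since $k \equiv r \pmod n$ and the gcd is invariant under subtracting multiples of $n$. If $r = 0$, this would give $\gcd(0,n) = n = 1$, contradicting $n \ge 2$; hence $r \ge 1$.

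No step is a genuine obstacle. The only care needed is the exponent-positivity hypothesis of Lemma~\ref{lem:powperiod}, which is why the bound $d_{j} \ge 1$ is checked explicitly above.
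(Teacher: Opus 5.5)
Your proposal is correct and follows the paper's proof essentially step for step: you use Lemmas~\ref{lem:expperiod} and~\ref{lem:powperiod} with the check $d_{j}\ge 1$, then count the three distinct elements $0,v_{1},v_{2}$, and finally use gcd invariance together with $r=0\Rightarrow n=1$. Your explicit remark that $d_{0}=1$ covers the case $n\mid k$, which the paper leaves implicit in ``$d_{k}\ge 1$ always''.
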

\begin{proof}
$d_{k} \ge 1$ always, so Lemmas~\ref{lem:expperiod} and~\ref{lem:powperiod}
give $x^{d_{k}} = x^{d_{k \bmod n}}$ for all $x \in K$; applying this to
$x = b+1$ and $x = b$ gives $\delta_{k} = \delta_{k \bmod n}$, and the image and
the count follow.  For the second part, $0, v_{1}, v_{2}$ are three distinct
elements, so $2^{n} = |K| \ge 3$, forcing $n \ge 2$.  Finally $\gcd(k,n)=1$
gives $\gcd(k \bmod n, n) = 1$, and $k \bmod n = 0$ would force $n = 1$.
\end{proof}

From now on we may and do assume
\[
  1 \le k < n, \qquad \gcd(k,n) = 1, \qquad n \ge 2 .
\]

\subsection{The Frobenius transport}

The following lemma shows that the derivative image $\Delta_{k}$ is closely related to $\Delta_{n-k}$ via a Frobenius automorphism. This transport will let us choose an odd parameter in the normalization step below.

\begin{lemma}[Frobenius transport]\label{lem:complement}
Let $|K| = 2^{n}$ and $k \le n$.  Then for all $b \in K$,
\[
  \delta_{k}(b) = \bigl(\delta_{n-k}(b)\bigr)^{2^{2k}},
\]
so $\Delta_{k}$ is the image of $\Delta_{n-k}$ under the automorphism
$x \mapsto x^{2^{2k}}$; in particular $|\Delta_{k}| = |\Delta_{n-k}|$.
\end{lemma}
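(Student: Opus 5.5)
The plan is to reduce the claim to a congruence between exponents modulo $2^{n}-1$, and then use the additivity of Frobenius. Write $\phi(x) = x^{2^{2k}}$. Since $\operatorname{char} K = 2$, $\phi$ is a field automorphism of $K$: it is a power of the absolute Frobenius, hence additive, multiplicative and bijective. Applying $\phi$ to $\delta_{n-k}(b) = (b+1)^{d_{n-k}} + b^{d_{n-k}} + 1$ and using additivity together with $\phi(1)=1$ gives
\[
  \phi\bigl(\delta_{n-k}(b)\bigr) = (b+1)^{d_{n-k}2^{2k}} + b^{d_{n-k}2^{2k}} + 1 .
\]
It therefore suffices to show that $x^{d_{n-k}2^{2k}} = x^{d_{k}}$ for every $x \in K$.

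For this I would invoke Lemma~\ref{lem:powperiod}. Both exponents are at least $1$, since $d_{j} = 4^{j}-2^{j}+1 \ge 1$ for every $j \ge 0$; this includes the edge case $k=n$, where $d_{0}=1$. So it is enough to prove the congruence $2^{2k}d_{n-k} \equiv d_{k} \pmod{2^{n}-1}$.

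To prove the congruence, compute in $\Z$:
\[
  2^{2k}d_{n-k} = 4^{k}\bigl(4^{n-k} - 2^{n-k} + 1\bigr) = 4^{n} - 2^{n}2^{k} + 4^{k}.
\]
Reducing with $2^{n} \equiv 1$ (hence $4^{n} \equiv 1$) yields $1 - 2^{k} + 4^{k} = d_{k}$. This is an honest integer identity followed by reduction in $\Z/(2^{n}-1)$, so no issue arises from subtraction of naturals; in the Lean setting this is the only point that needs care, namely casting to $\Z$ before reducing.

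Finally, since $\phi$ is a bijection of $K$, we get $\Delta_{k} = \phi(\Delta_{n-k})$, and therefore $|\Delta_{k}| = |\Delta_{n-k}|$. I do not expect a real obstacle here. The only mildly delicate step is the exponent congruence: one should handle the hypothesis $k \le n$ so that $n-k$ is a genuine natural number, and treat the truncated-subtraction form of $d_{j}$ correctly.
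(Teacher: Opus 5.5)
Your proposal is correct and follows essentially the same route as the paper. The paper writes the exponent congruence as the explicit integer identity $2^{2k}d_{n-k} = d_{k} + (2^{n}-1)\bigl((2^{n}-2^{k})+1\bigr)$ instead of reducing modulo $2^{n}-1$; it then likewise shifts exponents, which is valid since both are at least $1$, and concludes by additivity of $x \mapsto x^{2^{2k}}$.
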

\begin{proof}
A direct computation with the Kasami exponents gives the integer identity
\[
  2^{2k}\,d_{n-k} = d_{k} + (2^{n}-1)\bigl((2^{n}-2^{k})+1\bigr),
\]
and exponents may be shifted by multiples of $|K|-1 = 2^{n}-1$ without changing
the value of $x \mapsto x^{e}$ for $e \ge 1$.  Hence
$x^{d_{n-k}2^{2k}} = x^{d_{k}}$ for all $x \in K$, and applying this to $b+1$
and $b$ and using that $x \mapsto x^{2^{2k}}$ is additive gives the claim.
\end{proof}

\section{Counting identities from half-size alone}\label{sec:counting}

In this section $\Delta \subseteq K$ is an \emph{arbitrary} subset of a
characteristic-two finite field with
\[
  2|\Delta| = Q = |K| ,
\]
and $S(a)$, $N(\rho)$, $Z(\rho)$ are as in Definition~\ref{def:slopes}, with
$\Delta$ in place of $\Delta_{k}$.  Nothing Kasami-specific is used.

\begin{theorem}[Walsh triple-count formula]\label{thm:walshcount}
For every $\rho \in K$,
\[
  N(\rho) = \frac{Q^{2}}{8} + \frac{\ReX Z(\rho)}{Q}.
\]
\end{theorem}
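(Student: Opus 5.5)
We plan to prove this by additive-character orthogonality. For $w \in K$ we have $\frac{1}{Q}\sum_{\lambda \in K}\psi(\lambda w) = [w=0]$. Hence
\[
  N(\rho) = \frac{1}{Q}\sum_{\lambda \in K}\sum_{x,y,z \in \Delta}
     \psi\bigl(\lambda(x+\rho y+(1+\rho)z)\bigr)
   = \frac{1}{Q}\sum_{\lambda \in K} S(\lambda)\,S(\rho\lambda)\,S\bigl((1+\rho)\lambda\bigr),
\]
because $\psi$ is a homomorphism and the triple sum factorises. Since $\psi$ takes values $\pm1$, every $S(a)$ is an integer, so the sum is real and no real part needs to be taken.

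Next we separate the term $\lambda = 0$. There $S(0)=|\Delta| = Q/2$ for each of the three factors, whatever $\rho$ is. The contribution is therefore $\frac{1}{Q}(Q/2)^{3} = Q^{2}/8$, and this is the only place the hypothesis $2|\Delta|=Q$ is used. The remaining terms, summed over $\lambda \in \Kx$, are exactly $Z(\rho)$ with $\sigma = 1+\rho$, as in Definition~\ref{def:slopes}. This gives $N(\rho) = Q^{2}/8 + Z(\rho)/Q$.

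We expect no real obstacle. The only points to check are the following.
\begin{itemize}
\item The formula holds for every $\rho \in K$, including $\rho \in \{0,1\}$. When a factor like $S(0\cdot\lambda)$ appears, it simply equals $Q/2$, and the orthogonality identity does not care.
\item The product of three sums over $\Delta$ equals the sum over triples, which is plain distributivity.
\end{itemize}
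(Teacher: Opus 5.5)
Your proposal is correct and follows the paper's proof essentially verbatim: orthogonality of $\psi$, factorisation of the character sum over the three coordinates into $S(\lambda)S(\rho\lambda)S((1+\rho)\lambda)$, and isolation of the $\lambda=0$ term $S(0)^{3}=Q^{3}/8$, with the remaining terms forming $Z(\rho)$. Your side remarks, that $S$ is integer-valued so no real part is needed and that the formula also holds at $\rho\in\{0,1\}$, are accurate.
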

\begin{proof}
Orthogonality of $\psi$ gives, for $t \in K$,
$\sum_{a \in K}\psi(at) = Q\cdot \mathbf 1_{t=0}$.  Hence
\[
  Q\, N(\rho)
  = \sum_{(x,y,z) \in \Delta^{3}} \ \sum_{a \in K}
      \psi\bigl(a(x+\rho y+(1+\rho)z)\bigr)
  = \sum_{a \in K} S(a)\,S(\rho a)\,S\bigl((1+\rho)a\bigr),
\]
because for fixed $a$ the character factorises over the three coordinates.
The term $a=0$ contributes $S(0)^{3} = |\Delta|^{3} = Q^{3}/8$, and the
remaining terms are by definition $Z(\rho)$.  Dividing by $Q$ gives the formula.
\end{proof}

\begin{theorem}[Exact slope average]\label{thm:average}
Assume $Q > 2$ and $2|\Delta| = Q$.  Then
\[
  \frac{1}{|\mathcal S|}\sum_{\rho \in \mathcal S} N(\rho) = \frac{Q^{2}}{8},
  \qquad \mathcal S = K \setminus\{0,1\},\quad |\mathcal S| = Q-2 .
\]
\end{theorem}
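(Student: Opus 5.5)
We prove the statement by a direct double count over pairs (slope, triple): we compute $\sum_{\rho\in\mathcal S} N(\rho)$ exactly and divide by $|\mathcal S| = Q-2$. Interchanging the order of summation,
\[
  \sum_{\rho \in \mathcal S} N(\rho)
  = \sum_{(x,y,z)\in\Delta^{3}}
    \bigl|\{\rho \in \mathcal S : x + \rho y + (1+\rho)z = 0\}\bigr| .
\]
Rewrite the defining equation as $(x+z) + \rho\,(y+z) = 0$, which is linear in $\rho$. We then split the triples into cases according to $y+z$ and $x+z$.

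\emph{Case $y \neq z$.} There is exactly one $\rho\in K$ solving the equation, namely $\rho = (x+z)/(y+z)$. It is admissible unless it equals $0$ or $1$. We have $\rho=0$ exactly when $x=z$, and $\rho = 1$ exactly when $x = y$. Fixing $y\ne z$, the forbidden values of $x$ are therefore $x=z$ and $x=y$. These are two distinct elements of $\Delta$, so each such pair contributes $|\Delta|-2$ triples. The number of ordered pairs with $y\neq z$ is $|\Delta|(|\Delta|-1)$, so this case contributes $|\Delta|(|\Delta|-1)(|\Delta|-2)$.

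\emph{Case $y=z$.} The equation reduces to $x=z$, independently of $\rho$. The $|\Delta|$ diagonal triples $(z,z,z)$ are then solutions for every $\rho\in\mathcal S$, contributing $|\Delta|(Q-2)$. All other triples with $y=z$ contribute nothing.

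Putting the cases together and writing $h = |\Delta| = Q/2$,
\[
  \sum_{\rho\in\mathcal S} N(\rho) = h(h-1)(h-2) + h(Q-2)
  = h(h-1)(h-2) + 2h(h-1) = h^{2}(h-1).
\]
Since $Q-2 = 2(h-1)$, dividing by $|\mathcal S|$ gives $h^{2}/2 = Q^{2}/8$, as claimed. The hypothesis $Q>2$ is needed only to guarantee $|\mathcal S| = Q-2 > 0$, and hence $h-1\neq 0$, so the division is legitimate.

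There is no serious obstacle in this argument. The only point requiring care is the bookkeeping in the case split, in particular the degenerate diagonal $y=z$. Here one must check that $\Delta$ has no special structure that would make the count depend on $\Delta$. It does not: the count uses only $|\Delta|$, through the half-size hypothesis $2|\Delta|=Q$.
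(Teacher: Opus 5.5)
Your proposal is correct and follows the paper's proof: the same double count over (slope, triple) pairs, the same case split on $y=z$ versus $y\neq z$, and the same total $(Q-2)|\Delta|+|\Delta|(|\Delta|-1)(|\Delta|-2)$. Your factorisation of this total as $h^{2}(h-1)$ with $Q-2=2(h-1)$ is just a tidier presentation of the paper's final arithmetic.
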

\begin{proof}
We double count the set of pairs $(\rho, (x,y,z))$ with $\rho \in \mathcal S$,
$(x,y,z) \in \Delta^{3}$ and $x+\rho y+(1+\rho)z = 0$.  Fix a triple
$(x,y,z)$.

If $y = z$ the equation reads $x+y = 0$, so it holds for every admissible
$\rho$ exactly when $x=y=z$.  If $y \neq z$, the unique solution is
$\rho = (x+z)/(y+z)$, and this is admissible exactly when $x,y,z$ are pairwise
distinct.  Hence
\[
  \sum_{\rho \in \mathcal S} N(\rho)
   = (Q-2)|\Delta| + |\Delta|(|\Delta|-1)(|\Delta|-2).
\]
Substituting $|\Delta|=Q/2$ and dividing by $Q-2$ gives
\[
  \frac{1}{Q-2}\sum_{\rho\in\mathcal S}N(\rho)
  = \frac{Q}{2} + \frac{Q}{2}\left(\frac{Q}{4}-1\right)
  = \frac{Q^{2}}{8}.
\]
\end{proof}

Theorem~\ref{thm:average} is the ``conservation law'' of the argument.  Note
that it holds for any half-size set; what is special about $\Delta_{k}$ is the
pointwise lower bound proved in the next four sections.

\section{Normalisation of the parameter and of the coefficients}\label{sec:normalization}

The analytic part of the argument needs an odd parameter for which
$2^{k}+1$ factors as $3m$ with $m$ prime to $2^{n}-1$.  Both conditions can be
arranged inside the pair $\{k, n-k\}$.

\begin{proposition}\label{prop:normparam}
Let $k<n$ with $\gcd(k,n)=1$.  Then there are $k_{0}, m \in \N$ with
\[
  k_{0} \in \{k,\, n-k\}, \qquad k_{0} \text{ odd}, \qquad \gcd(k_{0},n)=1,
  \qquad 2^{k_{0}}+1 = 3m, \qquad \gcd(m, 2^{n}-1) = 1 .
\]
\end{proposition}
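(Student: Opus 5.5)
The plan is to choose $k_{0}$ by a short case split on the parity of $n$, and then to control $\gcd(m,2^{n}-1)$ through the multiplicative order of $2$ modulo a prime common divisor. Two preliminary facts hold in every case. First, $\gcd(n-k,n)=\gcd(k,n)=1$, so either choice of $k_{0}\in\{k,n-k\}$ satisfies $\gcd(k_{0},n)=1$. Second, for odd $k_{0}$ we have $2^{k_{0}}\equiv -1 \pmod 3$, so $m=(2^{k_{0}}+1)/3$ is a positive integer.

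The key observation concerns a prime $p$ dividing both $2^{k_{0}}+1$ and $2^{n}-1$. Such a $p$ is odd. Since $2^{2k_{0}}\equiv 1$ and $2^{n}\equiv 1 \pmod p$, the order of $2$ modulo $p$ divides $\gcd(2k_{0},n)$. Because $\gcd(k_{0},n)=1$, this gcd equals $1$ when $n$ is odd and $2$ when $n$ is even.

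\emph{Case $n$ odd.} Exactly one of $k,n-k$ is odd, and we take $k_{0}$ to be that one. If a prime $p$ divided both $2^{k_{0}}+1$ and $2^{n}-1$, the order of $2$ modulo $p$ would be $1$, giving $p\mid 1$, which is absurd. Hence $\gcd(2^{k_{0}}+1,2^{n}-1)=1$, and a fortiori $\gcd(m,2^{n}-1)=1$.

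\emph{Case $n$ even.} Coprimality forces $k$ to be odd, and then $n-k$ is odd as well, so both candidates are admissible for parity. Any common prime $p$ of $m$ and $2^{n}-1$ has order of $2$ dividing $2$, so $p\mid 2^{2}-1=3$, that is, $p=3$. Since $3\mid 2^{n}-1$ for even $n$, the condition $\gcd(m,2^{n}-1)=1$ becomes $3\nmid m$, i.e.\ $9\nmid 2^{k_{0}}+1$.

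The order of $2$ modulo $9$ is $6$, and $2^{3}\equiv -1 \pmod 9$. Hence $2^{j}\equiv -1 \pmod 9$ holds exactly when $j\equiv 3 \pmod 6$; for odd $j$ this is the same as $3\mid j$. So we must choose the odd candidate not divisible by $3$. We cannot have both $3\mid k$ and $3\mid n-k$, since that would give $3\mid n$ and contradict $\gcd(k,n)=1$. We therefore take $k_{0}=k$ if $3\nmid k$, and $k_{0}=n-k$ otherwise.

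The only real obstacle is the even case. There the common factor $3$ of $2^{k_{0}}+1$ and $2^{n}-1$ cannot be avoided, and the choice between $k$ and $n-k$ must be made modulo $3$ rather than by parity. The mod-$9$ computation above settles it, and everything else is routine order arithmetic.
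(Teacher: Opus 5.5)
Your proof is correct and follows essentially the same route as the paper. You bound the common divisors of $2^{k_0}+1$ and $2^n-1$ by $2^{\gcd(2k_0,n)}-1\in\{1,3\}$ using the order of $2$ modulo a common prime, where the paper quotes $\gcd(2^{a}-1,2^{b}-1)=2^{\gcd(a,b)}-1$. You then reduce the even case to $9\nmid 2^{k_0}+1$, i.e.\ $k_0\not\equiv 3\pmod 6$, and pick whichever of $k$, $n-k$ is not divisible by $3$, exactly as the paper does.
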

\begin{proof}
We first show that an odd $k_{0}$ coprime to $n$ works as soon as either $n$ is
odd or $k_{0}\not\equiv 3 \pmod 6$.  For odd
$k_{0}$ one has $2^{k_{0}}\equiv 2 \pmod 3$, so $3 \mid 2^{k_{0}}+1$ and
$2^{k_{0}}+1 = 3m$ for some $m$.  Since $\gcd(k_{0},n)=1$, we have
$\gcd(2^{2k_{0}}-1, 2^{n}-1) = 2^{\gcd(2k_{0},n)}-1 \in \{1,3\}$, and
$2^{k_{0}}+1 \mid 2^{2k_{0}}-1$, so every common divisor
of $2^{k_{0}}+1$ and $2^{n}-1$ divides $3$; hence $\gcd(m,2^{n}-1)$ divides $3$
as well, and equals $3$ only if $9 \mid 3m = 2^{k_{0}}+1$ and $3 \mid 2^{n}-1$.
Now $9 \mid 2^{k_{0}}+1$ holds exactly when $k_{0}\equiv 3 \pmod 6$, and $3 \mid 2^{n}-1$ exactly when
$n$ is even.  So under either of the two stated conditions
$\gcd(m,2^{n}-1)=1$.

It remains to choose $k_{0}\in\{k,n-k\}$ accordingly; in both cases
$1 \le k_{0} < n$ and $\gcd(k_{0},n) = \gcd(k,n) = 1$.  If $n$ is odd, exactly
one of $k$, $n-k$ is odd, and we take that one; no residue condition is needed.
If $n$ is even, then $k$ and $n-k$ are both odd; if $k \not\equiv 3 \pmod 6$ we
take $k_{0}=k$, and otherwise $3 \mid k$, so $3 \nmid n$ by coprimality, hence
$n \equiv 2$ or $4 \pmod 6$ and $n-k \equiv 5$ or $1 \pmod 6$; in particular
$n-k \not\equiv 3 \pmod 6$ and we take $k_{0}=n-k$.
\end{proof}

\begin{proposition}[Coefficient transport]\label{prop:transport}
Let $|K|=2^{n}$, $k \le n$, and let $v_{1},v_{2}$ be distinct nonzero
elements.  Then there are distinct nonzero $w_{1},w_{2}$ with
\[
  N_{k}(v_{1},v_{2}) = N_{n-k}(w_{1},w_{2}).
\]
\end{proposition}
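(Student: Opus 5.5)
We use Lemma~\ref{lem:complement}: the automorphism $\phi(x)=x^{2^{2k}}$ of $K$ maps $\Delta_{n-k}$ bijectively onto $\Delta_{k}$. The plan is to pull the defining equation of $N_{k}(v_{1},v_{2})$ back through $\phi$, which gives an equation of the same shape with new coefficients $w_{1},w_{2}$ and with $\Delta_{n-k}$ in place of $\Delta_{k}$.

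Set $\phi^{-1}$ for the inverse automorphism, which is again a power of Frobenius, and put $w_{1}=\phi^{-1}(v_{1})$ and $w_{2}=\phi^{-1}(v_{2})$. Since $\phi^{-1}$ is a field automorphism, it is injective, fixes $0$, and is additive. Therefore $w_{1},w_{2}$ are nonzero and distinct, and $\phi^{-1}(v_{1}+v_{2})=w_{1}+w_{2}$.

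Next consider the map $\Phi\colon\Delta_{n-k}^{3}\to\Delta_{k}^{3}$ defined by $(x',y',z')\mapsto(\phi(x'),\phi(y'),\phi(z'))$. By Lemma~\ref{lem:complement} it is a bijection. For a triple $(x,y,z)=\Phi(x',y',z')$ we compute
\[
  v_{1}x+v_{2}y+(v_{1}+v_{2})z
  =\phi\bigl(w_{1}x'+w_{2}y'+(w_{1}+w_{2})z'\bigr),
\]
using that $\phi$ is a ring homomorphism. Because $\phi$ is injective and $\phi(0)=0$, the left-hand side vanishes exactly when the argument on the right vanishes. Hence $\Phi$ restricts to a bijection between the solution set counted by $N_{n-k}(w_{1},w_{2})$ and the one counted by $N_{k}(v_{1},v_{2})$, and so the two counts are equal.

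There is no real obstacle here. The only point that needs care is the hypothesis $k\le n$ of Lemma~\ref{lem:complement}, which is exactly the hypothesis of the proposition, and this is what makes $n-k$ meaningful.
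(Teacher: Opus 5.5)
Your proof is correct and matches the paper's argument. Like the paper, you take $w_{i}=\phi^{-1}(v_{i})$ for the Frobenius power $\phi(x)=x^{2^{2k}}$ of Lemma~\ref{lem:complement}, and use that $\phi$ is a field automorphism to carry solution triples bijectively between $\Delta_{n-k}^{3}$ and $\Delta_{k}^{3}$ while preserving distinctness and nonvanishing; the only difference is that you push triples forward by $\phi$ where the paper pulls them back by $\phi^{-1}$, which is the same bijection.
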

\begin{proof}
By Lemma~\ref{lem:complement}, $\Delta_{k} = \phi(\Delta_{n-k})$ where
$\phi(x) = x^{2^{2k}}$ is a field automorphism.  Applying $\phi^{-1}$ to a
solution triple of $v_{1}x+v_{2}y+(v_{1}+v_{2})z=0$ in $\Delta_{k}^{3}$ gives a
solution triple in $\Delta_{n-k}^{3}$ of the equation with coefficients
$w_{i} = \phi^{-1}(v_{i})$, and this is a bijection between the two solution
sets.  Distinctness and nonvanishing are preserved by $\phi^{-1}$.
\end{proof}

\begin{remark}
It is important that the transport moves the \emph{coefficients}: it is
\emph{not} true that $N(\rho)$ at parameter $k$ equals $N(\rho)$ at parameter
$n-k$ for the same slope $\rho$.  Frobenius acts on the slope too.
\end{remark}

Combining Propositions~\ref{prop:normparam} and~\ref{prop:transport} with
Result~\ref{res:halfsize}, we may and do assume from now on, for the purpose
of proving the pointwise lower bound, that
\begin{equation}\label{eq:normalized}
  1 \le k < n, \quad k \text{ odd}, \quad \gcd(k,n)=1, \quad
  2^{k}+1 = 3m, \quad \gcd(m,2^{n}-1)=1, \quad 2|\Delta_{k}| = Q .
\end{equation}

\section{The multiplicative phase formula}\label{sec:phase}

Define the \emph{sign function} of $\Delta = \Delta_{k}$ on $\Kx$ by
\[
  F(x) = \begin{cases} +1, & x \in \Delta, \\ -1, & x \notin \Delta,\end{cases}
\]
and its multiplicative transform
$\widehat F(\chi) = \sum_{x \in \Kx} F(x)\chi(x)$.

\begin{theorem}[Phase formula]\label{thm:phase}
Assume \eqref{eq:normalized}.  Then for every multiplicative character $\chi$
of $K$,
\[
  \widehat F(\chi)
   = \frac{G(\chi)\,G\bigl(\chi^{2^{k}+1}\bigr)}{G\bigl(\chi^{3}\bigr)} .
\]
\end{theorem}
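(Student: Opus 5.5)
The plan is to compute $\widehat F(\chi)$ by counting fibres of $\delta_{k}$, and then to turn the resulting multiplicative sum into Gauss sums. I do not have the key structural identity in hand: everything below rests on Step 2, which I would take from Dobbertin/Dillon–Dobbertin and have not verified.

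\emph{Step 1 (fibre count).} On $K$ put $\mathbf 1_{\Delta}(x)=\tfrac12\,|\delta_{k}^{-1}(x)|$. This holds because every fibre of $\delta_{k}$ has size $0$ or $2$ (the APN property behind Result~\ref{res:halfsize}). Since $F=2\cdot\mathbf 1_{\Delta}-1$ on $\Kx$ and $\chi(0)=0$, we get
\[
  \widehat F(\chi)=\sum_{b\in K}\chi\bigl(\delta_{k}(b)\bigr)-\sum_{x\in\Kx}\chi(x).
\]
The second sum vanishes unless $\chi$ is trivial. In the trivial case we check directly that $\widehat F(\chi_{0})=(|\Delta|-1)-(Q-|\Delta|)=-1$, using \eqref{eq:zeroinDelta}. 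This agrees with the right-hand side, since $G(\chi_{0})=-1$ and each trivial-character Gauss sum equals $-1$. So we may assume $\chi\neq\chi_{0}$. Here the hypothesis $\gcd(m,2^{n}-1)=1$ matters: $\chi^{2^{k}+1}=\chi^{3m}$ is trivial exactly when $\chi^{3}$ is, and that case is handled separately by the same direct count.

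\emph{Step 2 (rational parametrisation of the derivative).} I would use Dobbertin's substitution for the Kasami derivative. One writes $b$ through an auxiliary variable $y$ (essentially $b=y+y^{2^{k}}$-type linearised substitutions, which are $2$-to-$1$ or bijective since $\gcd(k,n)=1$). The expected output is an identity of the shape
\[
  \delta_{k}(b)=\frac{P(y)^{2^{k}+1}}{R(y)^{3}},
\]
with $P,R$ explicit and the map $y\mapsto b$ of known multiplicity. Then $\chi(\delta_{k}(b))=\chi^{2^{k}+1}(P(y))\,\overline{\chi}^{3}(R(y))$, and the sum over $b$ becomes a sum over $y$ of a product of two multiplicative characters evaluated at simple expressions. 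Since $k$ is odd and $2^{k}+1=3m$, the exponent $2^{k}+1$ is $3m$ with $m$ invertible on $\Kx$. This is what should let us substitute $w=P^{m}$ and reduce to characters of degree-three expressions.

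\emph{Step 3 (Gauss-sum algebra).} For nontrivial $\chi$, expand
\[
  \chi(a)=\frac{1}{G(\overline\chi)}\sum_{t}\overline\chi(t)\psi(at),
\]
which follows from $G(\chi)G(\overline\chi)=\chi(-1)Q=Q$ in characteristic two. Apply this to the $\chi^{3}$ factor, and evaluate the resulting additive sum in $y$ by orthogonality. The result should be a single product $G(\chi)\,G(\chi^{2^{k}+1})$ divided by $G(\chi^{3})$, up to a normalisation which the trivial-character check fixes. This is the Dillon–Kashyap computation, and it is essentially a Jacobi-sum identity $J(\chi,\chi^{3m})$-type collapse.

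The main obstacle is Step 2. Producing the exact Dobbertin parametrisation with the correct multiplicities, valid for every odd $k$ coprime to $n$ and for both parities of $n$, is the genuinely Kasami-specific input. My first attempt would be to quote it from \cite{DillonDobbertin} (Theorem~A and its proof via the Müller–Cohen–Matthews polynomials) rather than rederive it.
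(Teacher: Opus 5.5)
\textbf{There is a genuine gap.} Your Step~1 and the trivial-character check are correct, but they carry none of the content. Everything rests on Steps~2 and~3, which you do not supply, and the shape you predict for Step~2 contradicts the formula you are trying to prove.

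Suppose $\delta_k(b)=P(y)^{2^k+1}/R(y)^3$ with $P(y),R(y)\in K$, and the parameter $y$ reaches every $b$. Since $2^k+1=3m$, every nonzero value $\delta_k(b)=\bigl(P(y)^m/R(y)\bigr)^3$ would then be a cube in $K$. For even $n\ge 4$ this is impossible: $\Delta_k\setminus\{0\}$ has $Q/2-1$ elements, while $\Kx$ contains only $(Q-1)/3$ cubes. Equivalently, for a character $\chi$ of order $3$ your Step~1 would give $\widehat F(\chi)=Q-2$. The theorem instead asserts $\widehat F(\chi)=G(\chi)G(1)/G(1)=G(\chi)$, which has absolute value $\sqrt Q$.

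The same example shows that the case $\chi^3=1\neq\chi$ is \emph{not} settled ``by the same direct count''. Since $\psi$ restricted to $\Kx$ is the sign function of $\ker\Tr=\Delta_1$, the identity $\widehat F(\chi)=G(\chi)$ for the cubic characters says exactly this: $\Delta_k$ meets each coset of the cubes in as many points as $\ker\Tr$ does. That is genuine arithmetic information about $\Delta_k$, and no count based only on $|\Delta_k|=Q/2$ and $0\in\Delta_k$ can give it.

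Two further problems:
\begin{enumerate}
\item The substitution $b=y^{2^k}+y$ with $y\in K$ is $2$-to-$1$ onto $\ker\Tr$ only. For even $n$ the symmetry $b\mapsto b+1$ preserves $\ker\Tr$, so this substitution reaches only half of $\Delta_k$.
\item Step~3 is not a computation. For nonlinear $P,R$ the $y$-sum does not collapse by orthogonality. An undetermined normalising factor also cannot be pinned down by the trivial character, since it may depend on $\chi$.
\end{enumerate}

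For comparison, the paper does not derive the phase formula at all. It quotes it from Dillon--Kashyap~\cite[Theorems 1--2]{DillonKashyap}. The only work done there is a sign adjustment, because their difference set is the complement of $\Delta$. It also remarks that the right-hand side is well defined, since $G(\chi^{3})\neq 0$ by Lemma~\ref{lem:gauss-sum}.

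If you intend to lean on the literature anyway, do what the paper does. Quote the character-sum identity itself, and check that its hypotheses and conventions match \eqref{eq:normalized}: complement versus set, sign, $k$ odd, $2^k+1=3m$, $\gcd(m,2^n-1)=1$. Do not reconstruct an intermediate parametrisation whose advertised form the target formula already rules out.
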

\begin{proof}
This is the formula of Dillon and Kashyap~\cite[Theorems 1--2]{DillonKashyap},
after changing their sign convention because their difference set is the
complement of $\Delta$.  The displayed expression is well defined:
$G(\chi^{3}) \neq 0$ for every $\chi$ because $\psi$ is primitive; see Lemma \ref{lem:gauss-sum}.
\end{proof}

We record the Gauss-sum identities needed to derive the all-character Walsh formula below.

\begin{lemma}\label{lem:gauss-sum}
For a multiplicative character $\chi: \Kx \to \C^{\times}$, let
\[ G(\chi) = \sum_{x \in \Kx} \chi(x) \psi(x) \]
be its Gauss sum. Then $G(1)=-1$ and $G(\chi)G(\chi^{-1}) = Q$ for nontrivial $\chi$. Moreover, the multiplicative Fourier inversion formula
\[ \sum_\chi G(\chi^{-1}) \chi(x) = (Q-1) \psi(x)\]
holds for all $x \in \Kx$.
\end{lemma}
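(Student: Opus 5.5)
We prove the three assertions separately, each by a direct computation with the orthogonality relations; nothing beyond the definition of $G(\chi)$ and the fact that $\psi$ is a nontrivial additive character is needed.

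\emph{Trivial character.} For $\chi = 1$ we have
\[
G(1) = \sum_{x \in \Kx} \psi(x) = \sum_{x \in K} \psi(x) - \psi(0) = 0 - 1 = -1,
\]
since a nontrivial additive character sums to zero over $K$.

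\emph{Product formula.} Let $\chi$ be nontrivial and expand
\[
G(\chi)G(\chi^{-1}) = \sum_{x,y \in \Kx} \chi(x y^{-1})\,\psi(x+y).
\]
Substitute $x = ty$ with $t \in \Kx$. The sum becomes
\[
\sum_{t \in \Kx} \chi(t) \sum_{y \in \Kx} \psi\bigl(y(t+1)\bigr).
\]
The inner sum equals $Q-1$ when $t = 1$, and $-1$ otherwise, again because $\psi$ is nontrivial and $y \mapsto y(t+1)$ permutes $\Kx$ when $t \neq 1$. In characteristic two, $t+1 = 0$ exactly when $t = 1$, so no sign question arises. Hence
\[
G(\chi)G(\chi^{-1}) = (Q-1)\chi(1) - \sum_{t \neq 1} \chi(t) = Q - \sum_{t \in \Kx} \chi(t) = Q,
\]
because $\sum_{t \in \Kx} \chi(t) = 0$ for nontrivial $\chi$.

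\emph{Inversion formula.} For $x \in \Kx$, expand
\[
\sum_\chi G(\chi^{-1})\chi(x) = \sum_{y \in \Kx} \psi(y) \sum_\chi \chi(x y^{-1}).
\]
Orthogonality of the multiplicative characters of the cyclic group $\Kx$ gives
\[
\sum_\chi \chi(x y^{-1}) = (Q-1)\,\mathbf{1}_{y = x}.
\]
Only the term $y = x$ survives, which yields $(Q-1)\psi(x)$.

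No step is a real obstacle. The one point that needs care is the convention that the sum over $\chi$ ranges over all $Q-1$ multiplicative characters, including the trivial one, whose Gauss sum is $-1$. This is consistent with the orthogonality relation used in the last step. We also note that $|G(\chi)|^2 = Q$ for nontrivial $\chi$, since $\overline{\chi} = \chi^{-1}$ and $\psi$ is real valued. So $G(\chi^3) \neq 0$ in every case, whether or not $\chi^3$ is trivial, which is the nonvanishing invoked in Theorem~\ref{thm:phase}.
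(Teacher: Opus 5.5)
Your proof is correct and follows the paper's argument essentially step for step: the same evaluation of $G(1)$, the same substitution $x = ty$ with the term $t=1$ split off for the product formula, and the same interchange of sums plus multiplicative orthogonality for the inversion formula. Your closing remark, that $|G(\chi)|^{2} = Q$ for nontrivial $\chi$ and $G(1) = -1$ together give $G(\chi^{3}) \neq 0$, makes explicit the nonvanishing that Theorem~\ref{thm:phase} cites from this lemma.
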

\begin{proof}
  For the trivial character $1$, we have $G(1) = \sum_{x \in \Kx} \psi(x) = -1$ because $\sum_{x \in K} \psi(x) = 0$ and $\psi(0) = 1$. Fix a nontrivial character $\chi$. Then
  \begin{align*}
    G(\chi)G(\chi^{-1}) &= \sum_{x \in \Kx} \sum_{y \in \Kx} \chi(x) \chi^{-1}(y) \psi(x) \psi(y) \\
    & = \sum_{x,y \in \Kx} \chi(x/y) \psi(x+y) \\
    & = \sum_{y,u \in \Kx} \chi(u) \psi(y(u+1)) \quad \text{with } u = x/y \\
    & = \chi(1) \sum_{y \in \Kx} \psi(0) + \sum_{u \neq 0,1} \chi(u) \sum_{y\in \Kx} \psi(y(u+1)) \\
    & = 1\cdot (2^n-1) - \sum_{u \neq 0,1} \chi(u)  \\
    & = 2^n -\sum_{u \in \Kx} \chi(u) = 2^n = Q.
  \end{align*}
  For the multiplicative Fourier inversion formula, 
  \begin{align*}
    \sum_\chi G(\chi^{-1}) \chi(x) &= \sum_\chi \sum_{y \in \Kx} \chi^{-1}(y) \psi(y) \chi(x) \\
    &=\sum_{y \in \Kx} \psi(y) \sum_\chi \chi(x/y) \\
    &= (Q-1) \psi(x),
  \end{align*}
  where we used the orthogonality of characters: $\sum_\chi \chi(x) = 2^n-1$ if $x=1$ and $0$ otherwise.
\end{proof}

The phase formula is the input for the Fourier analysis; we restate it in the
form actually consumed downstream.

\begin{theorem}[All-character Walsh formula]\label{thm:walshchar}
Assume \eqref{eq:normalized} and put $e = 2^{k}+1$.  Then for every
$a \in \Kx$,
\[
  2\,S(a) = \frac{Q}{N}\sum_{\chi}
      \frac{G(\chi^{e})}{G(\chi^{3})}\,\chi(a),
\]
the sum ranging over all $N = |\Kx|$ multiplicative characters of $K$.
\end{theorem}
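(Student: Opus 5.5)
The plan is to pass from the additive Walsh coefficient $S(a)$ to the multiplicative transform $\widehat F$, and then substitute the phase formula of Theorem~\ref{thm:phase}. The only inputs will be the half-size property $2|\Delta|=Q$, the fact $0\in\Delta$ from \eqref{eq:zeroinDelta}, and the identities of Lemma~\ref{lem:gauss-sum}.

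\emph{Step 1: rewrite $2S(a)$ through the sign function.} For $x\in\Kx$ we have $\mathbf 1_{\Delta}(x) = (1+F(x))/2$. Separating the term $x=0$, which contributes $\psi(0)=1$ because $0\in\Delta$, gives
\[
  2S(a) = 2 + \sum_{x\in\Kx}\psi(ax) + \sum_{x\in\Kx}F(x)\psi(ax)
        = 1 + \sum_{x\in\Kx}F(x)\psi(ax),
\]
since $\sum_{x\in\Kx}\psi(ax) = -1$ for $a\neq 0$.

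\emph{Step 2: expand $\psi(ax)$ multiplicatively.} The inversion formula of Lemma~\ref{lem:gauss-sum}, applied at the point $ax\in\Kx$, gives $\psi(ax)=\frac1N\sum_\chi G(\chi^{-1})\chi(a)\chi(x)$. Substituting this and interchanging the finite sums yields
\[
  2S(a) = 1 + \frac1N\sum_\chi G(\chi^{-1})\,\chi(a)\,\widehat F(\chi).
\]
Inserting Theorem~\ref{thm:phase} turns the summand into $G(\chi^{-1})G(\chi)G(\chi^{e})\chi(a)/G(\chi^{3})$.

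\emph{Step 3: simplify character by character.} For nontrivial $\chi$, Lemma~\ref{lem:gauss-sum} gives $G(\chi)G(\chi^{-1})=Q$, so the summand is exactly $Q\,G(\chi^e)\chi(a)/G(\chi^3)$, as in the target formula.

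\emph{Step 4: the trivial character.} This is the only point needing care. Here $G(1)=-1$, so the phase formula gives $\widehat F(1)=-1$. This agrees with the direct count $\widehat F(1)=(|\Delta|-1)-(Q-|\Delta|)=-1$, which uses half-size. Hence the trivial term in our expression is $\frac1N(-1)(-1)=\frac1N$. In the target formula the trivial term is instead $\frac QN\cdot\frac{G(1)}{G(1)}=\frac QN$. The discrepancy is $\frac QN-\frac1N = 1$, which is precisely the constant $1$ left over from Step 1. So the two sides agree.

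The main obstacle is this bookkeeping for the trivial character and the $x=0$ term, in particular making sure that the sign convention in Theorem~\ref{thm:phase} is the one giving $\widehat F(1)=-1$. The rest is routine exchange of finite sums. The denominators $G(\chi^{3})$ are nonzero by Lemma~\ref{lem:gauss-sum}, so every step is legitimate.
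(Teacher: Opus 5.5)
Your proposal is correct and follows the paper's proof essentially verbatim. You use the same reduction $2S(a)=1+\sum_{x}F(x)\psi(ax)$ via $0\in\Delta$, the same multiplicative Fourier inversion, and $G(\chi)G(\chi^{-1})=Q$ for nontrivial $\chi$, and you make the same observation that the principal character's discrepancy $\frac{Q}{N}-\frac{1}{N}=1$ absorbs the leftover constant. Your half-size cross-check of $\widehat F(1)=-1$ is not needed, since the phase formula already gives this value, but it is a harmless consistency check.
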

\begin{proof}
On $\Kx$ we have $1_{\Delta} = (1+F)/2$, so by \eqref{eq:zeroinDelta}
\[
  2S(a) = 2\sum_{x \in \Delta}\psi(ax)
        = 2 + \sum_{x \in \Kx}\bigl(1+F(x)\bigr)\psi(ax)
        = 1 + \sum_{x\in\Kx}F(x)\psi(ax),
\]
using $\sum_{x \in \Kx}\psi(ax) = -1$ for $a \neq 0$.  Multiplicative
Fourier inversion on $\Kx$ (Lemma \ref{lem:gauss-sum}) turns the last sum into
\[
  \sum_{x\in\Kx}F(x)\psi(ax)
   = \frac{1}{N}\sum_{\chi}\widehat F(\chi)\,\chi(a)\,G(\chi^{-1}),
\]
and inserting Theorem~\ref{thm:phase} together with
$G(\chi)G(\chi^{-1}) = Q$ for $\chi \neq 1$ and $G(1) = -1$ converts each term
into $\frac{Q}{N}\frac{G(\chi^{e})}{G(\chi^{3})}\chi(a)$, except for the
principal character, whose correction is exactly the constant $1$ subtracted
above.
\end{proof}

\section{From the phase to twisted root counts}\label{sec:rootcount}

The results of this section are more general and apply to any function $S$ satisfying the all-character Walsh formula (Theorem~\ref{thm:walshchar}), not just the specific function arising from the set $\Delta_k$. The parameters are $n$ and $m$, where $n$ is the degree of the field extension $K/\F_2$ and $\gcd(m,N) = 1$ with $N = |\Kx|$. Moreover, we define $e = 3m$ and choose $D$ such that $mD \equiv 1 \pmod{N}$. Then $x \mapsto x^{D}$ and $x \mapsto x^{m}$ are mutually inverse bijections of $K$.

\begin{definition}\label{def:rootcount}
Let $\muthree = \{u \in K : u^{3}=1\}$ and $c = |\muthree| \in \{1,3\}$; $c=3$
if $n$ is even and $c=1$ if $n$ is odd.  Put
\[
  \Phi(x) = \sum_{u \in \muthree}\psi\bigl(u x^{D}\bigr),
  \qquad
  \widehat\Phi(z) = \sum_{t \in K}\Phi(t)\psi(zt),
\]
\[
  W_{u,v}(A,B) = \sum_{x,y \in K}
      \psi\bigl(u x^{D} + v (Ax+By)^{D} + y^{D}\bigr),
\]
\[
  R_{u,v}(A,B) = \bigl|\{t \in K : u t^{D} + v (At+B)^{D} = 1\}\bigr| .
\]
\end{definition}

\begin{theorem}[Phase-to-root-count identity]\label{thm:phaseroot}
Let $\psi$ be primitive, $e = 3m$, $mD \equiv 1 \pmod N$, and let
$S \colon K \to \C$ satisfy the all-character Walsh formula of
Theorem~\ref{thm:walshchar}.  Let $A,B \in K$ with $A \neq 0$,
$A^{3}+B^{3} = 1$, and $A^{3}\neq 1$.  Then
\[
  Z(A^{3})
  = \frac{Q^{2}}{8}\Bigl(
      \sum_{u,v \in \muthree} R_{u,v}(A,B) - c^{2}\Bigr),
\]
where $Z$ is the phase triple sum at the pair $(\rho,\sigma) = (A^{3},B^{3})$.
\end{theorem}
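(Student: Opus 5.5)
The plan is to turn the multiplicative Walsh formula of Theorem~\ref{thm:walshchar} back into an additive expression for $S$. Then I expand the triple product $Z(A^{3})$ and reduce the resulting character sums to point counts by homogeneity.

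\emph{Step 1: an additive formula for $S$.} For $\chi$ with $\chi^{3}\neq 1$, Lemma~\ref{lem:gauss-sum} gives $1/G(\chi^{3}) = G(\chi^{-3})/Q$. So the nonprincipal-cube part of the sum in Theorem~\ref{thm:walshchar} becomes
\[
\frac{1}{N}\sum_{\chi} G(\chi^{3m})G(\chi^{-3})\chi(a) = \sum_{x,y\in\Kx,\ x^{3m}a=y^{3}}\psi(x)\psi(y).
\]
The characters with $\chi^{3}=1$ (there are $c$ of them) are treated separately, using $G(1)=-1$. Next I substitute $x=s^{D}$, so that $x^{3m}=s^{3}$, and solve $y^{3}=a s^{3}$. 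When $a=w^{3}$ this gives $y=u w s$ with $u\in\muthree$; when $a$ is not a cube (possible only if $c=3$) there are no solutions. The expected outcome is a closed formula of the form
\[
2S(w^{3}) = \sum_{s\in K}\sum_{u\in\muthree}\psi\bigl(s^{D}+u w s\bigr) + (\text{constant}),\qquad 2S(a)=(\text{constant})\ \text{for } a \text{ a noncube}.
\]
The constant comes from the $c$ characters with $\chi^{3}$ trivial and from the $s=0$ terms. Equivalently, $2S(w^{3})$ is an affine expression in $\widehat\Phi(w)$ after the change of variable $s\mapsto s$, $s^{D}\mapsto$ the argument of $\Phi$. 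I would first pin down these constants on the trivial case, checking that $S(0)$ and $\sum_{a}S(a)=Q\,\mathbf 1_{0\in\Delta}=Q$ are reproduced.

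\emph{Step 2: expand $Z$.} Since $\rho=A^{3}$ and $\sigma=B^{3}$ are cubes, $\lambda$, $\rho\lambda$ and $\sigma\lambda$ are simultaneously cubes or noncubes. Writing $\lambda=w^{3}$ lets each $w$ represent $\lambda$ exactly $c$ times, and the noncube $\lambda$ contribute only products of constants. On the cube part I insert the formula from Step 1 for $S(w^{3})$, $S((Aw)^{3})$ and $S((Bw)^{3})$ and multiply out. The fully oscillating product sums over $w$ first:
\[
\sum_{w}\psi\bigl(w(u_{1}s_{1}+u_{2}As_{2}+u_{3}Bs_{3})\bigr)
\]
is $Q$ times the indicator of the linear relation $u_{1}s_{1}=u_{2}As_{2}+u_{3}Bs_{3}$. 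Eliminating $s_{1}$ and rescaling by the $u_{i}$ (using $u^{D}$ again in $\muthree$, since $D$ is prime to $3$ whenever $c=3$) produces exactly the sums $W_{u,v}(A,B)$ of Definition~\ref{def:rootcount}. The mixed products of constants with one or two oscillating factors collapse, via $\sum_{s}\psi(s^{D})=\sum_{s}\psi(s)=0$, to explicit multiples of $Q$.

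\emph{Step 3: homogeneity (the Weil-sum evaluation).} In $W_{u,v}(A,B)$ I split off $y=0$ and put $x=ty$ for $y\neq 0$. The exponent then becomes $y^{D}\bigl(ut^{D}+v(At+B)^{D}+1\bigr)$. Since $y\mapsto y^{D}$ is a bijection, the sum over $y\in\Kx$ equals $Q-1$ when the bracket vanishes and $-1$ otherwise. This gives $W_{u,v}=Q\,R_{u,v}(A,B)-Q+(\text{the } y=0 \text{ term})$. The $y=0$ term is $\sum_{x}\psi\bigl((u+vA^{D})x^{D}\bigr)$, which is $Q$ or $0$ according as $u+vA^{D}=0$. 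I expect the hypothesis $A^{3}\neq 1$ to be used here, together with $A^{3}+B^{3}=1$ and $B\neq 0$, to show that these degenerate terms cancel against the constant terms from Step 2 rather than survive.

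\emph{Main obstacle.} The hard part is the bookkeeping of all these constants: the $c$ characters with $\chi^{3}=1$, the $s=0$ and $y=0$ terms, the noncube $\lambda$ when $n$ is even, and the $c$-to-one parametrisation $\lambda=w^{3}$. All of this must collapse to exactly $\frac{Q^{2}}{8}\bigl(\sum_{u,v}R_{u,v}-c^{2}\bigr)$. I would do this separately for $c=1$ and $c=3$, sanity-checking the final constant against the average of Theorem~\ref{thm:average} combined with Theorem~\ref{thm:walshcount}.
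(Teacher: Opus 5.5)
Your architecture is the paper's: a closed additive form of $S$, vanishing at noncubes, the $c$-to-one parametrisation $\lambda=w^{3}$, orthogonality in $w$ (the paper's Parseval step), the $\muthree$-rescaling to $c\sum_{u,v}W_{u,v}(A,B)$ (its symmetrisation step), and the homogeneity evaluation of $W_{u,v}$. What is missing is the step you flag as the main obstacle, and your expectations about it are wrong: there is no bookkeeping, because every constant is zero.

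\emph{The constants in Step 1.} Separate the $\chi^{3}=1$ terms, and subtract the copy $\frac1N\sum_{\chi^{3}=1}G(1)^{2}\chi(a)$ already contained in your full Gauss-product sum. Their net contribution is then $\sum_{\chi^{3}=1}\chi(a)$, the number of cube roots of $a$ in $\Kx$.
\begin{itemize}
\item At a noncube this count is $0$ and the double sum is empty, so $S(a)=0$ exactly.
\item At $a=w^{3}$ the count is $c$. This is exactly the $s=0$ term $\sum_{u}\psi(0)$ missing from $\sum_{s\in\Kx}$. Hence $2S(w^{3})=\sum_{s\in K}\sum_{u\in\muthree}\psi(s^{D}+uws)=\widehat\Phi(w)$ with no additive constant. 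To see the last equality, put $s=u^{-1}t$ and use that $u\mapsto u^{-D}$ permutes $\muthree$.
\end{itemize}
Thus $8c\,Z(A^{3})=\sum_{w\in\Kx}\widehat\Phi(w)\widehat\Phi(Aw)\widehat\Phi(Bw)$. You must add the $w=0$ term before applying orthogonality in $w$, and this is harmless because $\widehat\Phi(0)=\sum_{u}\sum_{t}\psi(ut^{D})=0$.

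Your proposed calibration through $S(0)$ and $\sum_{a}S(a)=Q$ is not available here. The theorem concerns an arbitrary $S$ satisfying the Walsh formula on $\Kx$ only, so $S(0)$ is unconstrained and there is no underlying set $\Delta$.

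\emph{Step 3.} Here too nothing cancels: the $y=0$ slice vanishes outright. If $u+vA^{D}=0$, then $A^{D}=uv^{-1}\in\muthree$, so $(A^{3})^{D}=1$. Since $x\mapsto x^{D}$ is a bijection fixing $1$, this forces $A^{3}=1$. This is where the hypothesis $A^{3}\neq 1$ is spent, and it gives $W_{u,v}(A,B)=Q\bigl(R_{u,v}(A,B)-1\bigr)$ exactly.

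The relation $A^{3}+B^{3}=1$ plays no role in the Weil evaluation. Together with $A^{3}\neq 1$ it gives $B\neq 0$, which you need in Step 2: it makes $Bw\in\Kx$, so the cube formula applies to $S((Bw)^{3})$.

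With these facts your chain closes as
$8c\,Z(A^{3})=Q\,c\sum_{u,v}W_{u,v}(A,B)=c\,Q^{2}\bigl(\sum_{u,v}R_{u,v}(A,B)-c^{2}\bigr)$,
and cancelling $c$ gives the statement.
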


The rest of this section is devoted to the proof of Theorem~\ref{thm:phaseroot}.
It is a chain of elementary identities: a closed form for $S$
(\S\ref{ssec:closedform}), the identification of $2S$ at cubes with the Fourier
transform $\widehat\Phi$ (\S\ref{ssec:transform}), a Parseval step and a
symmetrisation step (\S\ref{ssec:parseval}), and finally the evaluation of the
Weil sums as root counts (\S\ref{ssec:weil}).  None of the steps uses anything
about $\Delta_{k}$: the only input is the all-character Walsh formula, so the
theorem is a statement about an arbitrary function $S$ satisfying it.  We
assemble the pieces in \S\ref{ssec:assemblyroot}.

Throughout, $\psi$ is the fixed primitive additive character, $e = 3m$ with
$m \ge 1$, $D \ge 1$ satisfies $mD \equiv 1 \pmod N$, and $S \colon K \to \C$
satisfies
\begin{equation}\label{eq:walshhyp}
  2S(a) = \frac{Q}{N}\sum_{\chi}\frac{G(\chi^{3m})}{G(\chi^{3})}\chi(a)
  \qquad (a \in \Kx),
\end{equation}
the sum being over all $N$ multiplicative characters of $K$.  Recall the two
standing conventions: a multiplicative character is extended to $K$ by
$\chi(0) = 0$, and $\Kx$-indexed sums of a function vanishing at $0$ may be
extended to $K$ without change.

\subsection{Preliminaries on the exponent \texorpdfstring{$D$}{D} and on
\texorpdfstring{$\muthree$}{mu3}}\label{ssec:prelimroot}

\begin{lemma}[The power map $x \mapsto x^{D}$]\label{lem:powD}
Let $m,D \ge 1$ with $mD \equiv 1 \pmod{N}$.  Then:
\begin{enumerate}
\item $x^{mD} = x$ for every $x \in K$; consequently $(x^{D})^{m} = x$ and
  $(x^{m})^{D} = x$, so $x \mapsto x^{D}$ and $x \mapsto x^{m}$ are mutually
  inverse bijections of $K$ fixing $0$ and $1$;
\item for every $C \in K$,
  \[
    \sum_{y \in K}\psi\bigl(C y^{D}\bigr) =
      \begin{cases} Q, & C = 0,\\ 0, & C \neq 0.\end{cases}
  \]
\end{enumerate}
\end{lemma}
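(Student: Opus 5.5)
For part (1), we write $mD = 1 + jN$ for some integer $j \ge 0$. This is possible because $mD \equiv 1 \pmod N$ and $mD \ge 1$.

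For $x = 0$ we have $x^{mD} = 0 = x$, since $mD \ge 1$. For $x \in \Kx$, Lagrange's theorem in the cyclic group $\Kx$ gives $x^{N} = 1$, so $x^{mD} = x\cdot (x^{N})^{j} = x$. Lemma~\ref{lem:powperiod} gives the same conclusion directly.

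Since $(x^{D})^{m} = x^{mD} = (x^{m})^{D}$, both compositions of the maps $x\mapsto x^{D}$ and $x \mapsto x^{m}$ are the identity of $K$. Hence the two maps are mutually inverse bijections. Both fix $0$ because $D, m \ge 1$, and both fix $1$ trivially.

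For part (2), the plan is to reindex by the bijection from part (1). As $y$ runs over $K$, $s = y^{D}$ also runs over $K$ exactly once. Therefore
\[
  \sum_{y \in K}\psi\bigl(C y^{D}\bigr) = \sum_{s \in K}\psi(Cs).
\]
This is the standard orthogonality of the additive character, already used in the proof of Theorem~\ref{thm:walshcount}.

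If $C = 0$, every term equals $\psi(0) = 1$, so the sum is $Q$.

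If $C \neq 0$, multiplication by $C$ is a bijection of $K$, so the sum equals $\sum_{s\in K}\psi(s)$. This vanishes because $\psi$ is a nontrivial homomorphism $(K,+) \to \{\pm1\}$. Indeed, the trace map $K \to \F_2$ is surjective, so exactly half of the elements of $K$ have trace $1$. Equivalently, if $\psi(s_0) = -1$, then translating by $s_0$ shows the sum equals its own negative.

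There is no real obstacle here. The only point needing care is the element $0$ in part (1): the congruence alone only controls $\Kx$, and it is the hypothesis $m, D \ge 1$ that makes $0^{mD} = 0$ and ensures the power maps fix $0$ rather than being undefined or sending $0$ to $1$.
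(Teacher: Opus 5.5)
Your proposal is correct and matches the paper's proof. Both write $mD = 1 + jN$, treat $x = 0$ separately using $m, D \ge 1$, and in part (2) reindex by the bijection $s = y^{D}$ to reduce to orthogonality of $s \mapsto \psi(Cs)$; the only cosmetic difference is that you rescale by $C$ and use nontriviality of $\psi$, where the paper appeals to primitivity to see that $\psi(C\,\cdot\,)$ is nontrivial for $C \neq 0$.
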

\begin{proof}
(1) Write $mD = 1 + Nj$ with $j \ge 0$, which is possible since
$mD \ge 1$ and $mD \equiv 1 \pmod N$.  For $x = 0$ the claim is
$0^{mD} = 0$, true because $mD \ge 1$.  For $x \in \Kx$ we have $x^{N} = 1$ by
Lagrange, so $x^{mD} = x\cdot (x^{N})^{j} = x$.  Since $mD = Dm$, both
$(x^{D})^{m} = x$ and $(x^{m})^{D} = x$ follow, which exhibits the two maps as
mutually inverse.

(2) By (1) the substitution $y = x^{m}$, i.e.\ $x = y^{D}$, is a bijection of
$K$, so $\sum_{y}\psi(Cy^{D}) = \sum_{x}\psi(Cx)$.  The latter is the sum of
the shifted character $\psi_{C} = \psi(C\,\cdot\,)$ over the additive group $K$;
it equals $Q$ if $\psi_{C}$ is trivial and $0$ otherwise, and by primitivity of
$\psi$ the character $\psi_{C}$ is trivial exactly for $C = 0$.
\end{proof}

\begin{lemma}[The group $\muthree$]\label{lem:mu3basic}
$\muthree = \{u \in K : u^{3} = 1\}$ is a subgroup of $\Kx$ of order
$c = \gcd(3,Q-1)$; thus $c = 3$ if $n$ is even and $c = 1$ if $n$ is odd, and
in particular $c \neq 0$.  Moreover:
\begin{enumerate}
\item for every $A \in \Kx$ the fibre of the cubing map through $A$ is
  $\{b \in \Kx : b^{3} = A^{3}\} = A\muthree$, of cardinality exactly $c$;
\item if $c = 1$ then every element of $\Kx$ is a cube;
\item $u \mapsto (u^{-1})^{D}$ is a bijection of $\muthree$ onto itself, with
  inverse $v \mapsto (v^{m})^{-1}$;
\item for $b \in \muthree$, both $a \mapsto ab^{-1}$ and $g \mapsto gb^{-1}$
  are bijections of $\muthree$ onto itself.
\end{enumerate}
\end{lemma}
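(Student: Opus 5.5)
The plan is to argue everything from the cyclic structure of $\Kx$, which has order $N = Q-1 = 2^{n}-1$.

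\emph{Order of $\muthree$.} Since $\Kx$ is cyclic, for each divisor of $N$ it has exactly one subgroup of that order. The solutions of $u^{3}=1$ form the kernel of the cubing endomorphism, so $\muthree$ is a subgroup of order $c = \gcd(3,N)$. We have $2^{n} \equiv (-1)^{n} \pmod 3$, hence $3 \mid N$ exactly when $n$ is even. This gives $c=3$ for even $n$ and $c=1$ for odd $n$, and in particular $c \neq 0$.

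\emph{Item (1).} If $b^{3}=A^{3}$ with $A \neq 0$, then $(b/A)^{3}=1$, so $b \in A\muthree$. Conversely, every element of $A\muthree$ cubes to $A^{3}$. Multiplication by $A$ is injective, so $|A\muthree| = c$.

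\emph{Item (2).} If $c=1$, the cubing map on $\Kx$ has trivial kernel. It is therefore injective on a finite set, hence surjective, so every element of $\Kx$ is a cube.

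\emph{Item (3).} This is the only point needing care, because $D$ is only defined through $mD \equiv 1 \pmod N$. For $u \in \muthree$, both $u^{-1}$ and $(u^{-1})^{D}$ lie in $\muthree$, since $\muthree$ is a subgroup and powers of its elements stay in it. The proposed inverse also maps $\muthree$ into itself, since $(v^{m})^{-1} \in \muthree$. To check the compositions, apply Lemma~\ref{lem:powD}(1), which gives $(x^{D})^{m} = x = (x^{m})^{D}$ for all $x$, together with the fact that power maps commute with inversion.
\begin{itemize}
\item In one direction: $\bigl(((u^{-1})^{D})^{m}\bigr)^{-1} = (u^{-1})^{-1} = u$.
\item In the other: $\bigl(((v^{m})^{-1})^{-1}\bigr)^{D} = (v^{m})^{D} = v$.
\end{itemize}
Both composites are the identity, so the map is a bijection of $\muthree$ onto itself with the stated inverse.

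\emph{Item (4).} For $b \in \muthree$, right multiplication by $b^{-1}$ is a bijection of the group $\muthree$, with inverse given by multiplication by $b$. The two maps in (4) are this same map with the variable renamed.

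No step is a real obstacle. The only delicate points are the congruence $2^{n} \bmod 3$ and the use of Lemma~\ref{lem:powD} in item (3).
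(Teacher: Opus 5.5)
Your proposal is correct and follows the paper's proof essentially step for step. You use the kernel-of-cubing description of $\muthree$ together with $2\equiv -1 \pmod 3$, the coset description of the fibre in (1), injective-implies-surjective in (2), the same two composites via Lemma~\ref{lem:powD}(1) in (3), and translation in a group for (4); the only cosmetic difference is that you obtain $(u^{-1})^{D}\in\muthree$ from closure of the subgroup rather than by cubing explicitly.
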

\begin{proof}
$\muthree$ is the kernel of the endomorphism $x \mapsto x^{3}$ of the cyclic
group $\Kx$ of order $N = Q-1$, hence a subgroup of order $\gcd(3,N)$.  Since
$Q = 2^{n}$ and $2 \equiv -1 \pmod 3$, we get $3 \mid 2^{n}-1$ iff $n$ is even.
(1) is the coset description of a kernel fibre: $b^{3} = A^{3}$ iff
$(bA^{-1})^{3} = 1$ iff $b \in A\muthree$, and $b \mapsto bA^{-1}$ is a
bijection $A\muthree \to \muthree$.  (2) If $c=1$ the cubing map is injective
on the finite group $\Kx$, hence surjective.  (3) If $u^{3} = 1$ then
$((u^{-1})^{D})^{3} = ((u^{3})^{-1})^{D} = 1$, so the map sends $\muthree$ into
itself.  Writing $\varphi(u) = (u^{-1})^{D}$ and $\varphi'(v) = (v^{m})^{-1}$,
Lemma~\ref{lem:powD}(1) gives
$\varphi'(\varphi(u)) = \bigl(((u^{-1})^{D})^{m}\bigr)^{-1} = (u^{-1})^{-1} = u$
and
$\varphi(\varphi'(v)) = \bigl(((v^{m})^{-1})^{-1}\bigr)^{D} = (v^{m})^{D} = v$,
so the two maps are mutually inverse bijections of $\muthree$.  (4) is the
bijectivity of translation in a group.
\end{proof}

\subsection{The closed form of the phase}\label{ssec:closedform}

The first step removes all Gauss sums from \eqref{eq:walshhyp}.  Write
$\theta = \chi^{3}$, so that $\chi^{e} = \chi^{3m} = \theta^{m}$.

\begin{lemma}[Gauss-ratio decomposition]\label{lem:ratio}
For every multiplicative character $\chi$ of $K$, with $\theta = \chi^{3}$,
\[
  \frac{G(\chi^{3m})}{G(\chi^{3})}
   = \frac{G(\theta^{m})\,G(\theta^{-1})}{Q}
     + \begin{cases} 1 - \dfrac1Q, & \theta = 1,\\[2pt] 0, & \theta \neq 1 .\end{cases}
\]
\end{lemma}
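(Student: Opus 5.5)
We split into the two cases $\theta = 1$ and $\theta \neq 1$, using only Lemma~\ref{lem:gauss-sum} ($G(1) = -1$, and $G(\eta)G(\eta^{-1}) = Q$ for nontrivial $\eta$). Throughout, $\chi^{3m} = \theta^{m}$ and $\chi^{3} = \theta$. The one structural input is that $\gcd(m,N)=1$, so $\eta \mapsto \eta^{m}$ is an automorphism of the character group; in particular $\theta^{m} = 1$ if and only if $\theta = 1$.

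\emph{Case $\theta \neq 1$.} Here $\theta^{-1}$ is nontrivial, so $G(\theta)G(\theta^{-1}) = Q$, and in particular $G(\theta) \neq 0$. Hence $1/G(\theta) = G(\theta^{-1})/Q$, which gives
\[
  \frac{G(\chi^{3m})}{G(\chi^{3})} = \frac{G(\theta^{m})}{G(\theta)} = \frac{G(\theta^{m})\,G(\theta^{-1})}{Q}.
\]
This is exactly the claimed formula with correction term $0$. Note that this case does not actually need $\theta^{m} \neq 1$.

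\emph{Case $\theta = 1$.} Then $\theta^{m} = 1$ and $\theta^{-1} = 1$, so the left side is $G(1)/G(1) = 1$. On the right, $G(1)G(1)/Q = 1/Q$, and $1/Q + (1 - 1/Q) = 1$, so the two sides agree.

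No step is a real obstacle. The only points needing care are that $G(\theta) \neq 0$, which follows from the product identity in the nontrivial case and from $G(1) = -1$ in the trivial case, and that both cases use the same division by $Q$. The whole argument is therefore a short verification from Lemma~\ref{lem:gauss-sum}.
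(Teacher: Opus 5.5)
Your proof is correct and is essentially the paper's argument: the same split into $\theta=1$ and $\theta\neq1$, using $G(1)=-1$ and $G(\theta)G(\theta^{-1})=Q$ to rewrite $1/G(\theta)$ as $G(\theta^{-1})/Q$. The only difference is your opening appeal to $\gcd(m,N)=1$, which is never actually used, since $\theta=1$ forces $\theta^{m}=1$ trivially and, as you note yourself, the other case does not need it.
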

\begin{proof}
Both sides are well defined because $G(\theta) \neq 0$ for every $\theta$.
If $\theta = 1$ then also
$\theta^{m} = 1$ and $\theta^{-1} = 1$, so the left-hand side is
$G(1)/G(1) = 1$ while the right-hand side is
$G(1)^{2}/Q + 1 - 1/Q = 1/Q + 1 - 1/Q = 1$, using $G(1) = -1$.  If
$\theta \neq 1$ then $G(\theta)G(\theta^{-1}) = Q$, i.e.\
$G(\theta^{-1})/Q = G(\theta)^{-1}$, and the right-hand side is
$G(\theta^{m})/G(\theta)$, which is the left-hand side.
\end{proof}

\begin{lemma}[Orthogonality for the Gauss products]\label{lem:gaussprod}
For every $a \in \Kx$,
\[
  \sum_{\chi} G\bigl((\chi^{3})^{m}\bigr)\,G\bigl((\chi^{3})^{-1}\bigr)\,\chi(a)
   = N \sum_{x \in \Kx}\ \sum_{\substack{y \in \Kx\\ y^{3} = a\,x^{3m}}}\psi(x+y).
\]
\end{lemma}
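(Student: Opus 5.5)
Expanding both Gauss sums, exchanging the order of summation, and applying character orthogonality gives the identity directly. For $x,y \in \Kx$ we write
\[
  G\bigl(\chi^{3m}\bigr) = \sum_{x\in\Kx}\chi^{3m}(x)\psi(x)=\sum_{x}\chi(x^{3m})\psi(x),
  \qquad
  G\bigl(\chi^{-3}\bigr) = \sum_{y\in\Kx}\chi(y^{-3})\psi(y).
\]
Multiplying and including the factor $\chi(a)$, the left-hand side becomes
\[
  \sum_{x,y\in\Kx}\psi(x+y)\sum_{\chi}\chi\bigl(a\,x^{3m}y^{-3}\bigr),
\]
after moving the finite sum over $\chi$ inside. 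Here we use that $\chi$ is multiplicative on $\Kx$, so $\chi^{3m}(x)\chi^{-3}(y)\chi(a)=\chi(ax^{3m}y^{-3})$. Every argument lies in $\Kx$, so the convention $\chi(0)=0$ never comes into play.

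Next we apply the orthogonality relation already used in Lemma~\ref{lem:gauss-sum}: $\sum_\chi\chi(w)=N$ if $w=1$ and $0$ otherwise, for $w\in\Kx$. The inner sum is therefore $N$ exactly when $a x^{3m} y^{-3}=1$, i.e.\ when $y^{3}=a x^{3m}$, and it vanishes otherwise. What remains is
\[
  N\sum_{x\in\Kx}\ \sum_{\substack{y\in\Kx\\ y^{3}=ax^{3m}}}\psi(x+y),
\]
which is the claimed right-hand side.

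None of these steps is a real obstacle. The only point that needs care is the bookkeeping of the characters: $(\chi^{3})^{m}=\chi^{3m}$, and $(\chi^{3})^{-1}(y)=\chi(y)^{-3}=\chi(y^{-3})$, which holds because $\chi$ takes values in roots of unity on $\Kx$. The sum over $\chi$ runs over the full character group of order $N$, so the orthogonality constant is exactly $N$. No hypothesis on $m$ (such as $\gcd(m,N)=1$) and no hypothesis on $a$ beyond $a\neq0$ is needed.
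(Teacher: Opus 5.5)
Your proposal is correct and follows the paper's proof step for step. You expand both Gauss sums over $\Kx$, combine the factors by multiplicativity into $\chi(a\,x^{3m}y^{-3})\psi(x+y)$, interchange the finite sums, and use $\sum_{\chi}\chi(g)=N\cdot\mathbf 1_{g=1}$ to pick out $y^{3}=a\,x^{3m}$ (the identity $(\chi^{3})^{-1}(y)=\chi(y^{-3})$ needs only that $\chi$ is a homomorphism, not the root-of-unity remark).
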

\begin{proof}
Expand both Gauss sums as sums over $\Kx$ (legitimate since $\chi(0)=0$):
\[
  G\bigl((\chi^{3})^{m}\bigr)G\bigl((\chi^{3})^{-1}\bigr)\chi(a)
   = \sum_{x \in \Kx}\sum_{y \in \Kx}
      \chi^{3m}(x)\,\chi^{-3}(y)\,\chi(a)\,\psi(x)\psi(y).
\]
Since $\chi$ is multiplicative and $\psi(x)\psi(y) = \psi(x+y)$, the summand
equals $\chi\bigl(x^{3m}y^{-3}a\bigr)\psi(x+y)$.  Summing over all $\chi$ and
interchanging the (finite) sums, the first orthogonality relation
\[
  \sum_{\chi}\chi(g) = \begin{cases} N, & g = 1,\\ 0, & g \neq 1\end{cases}
  \qquad (g \in \Kx)
\]
gives
\[
  \sum_{\chi}G\bigl((\chi^{3})^{m}\bigr)G\bigl((\chi^{3})^{-1}\bigr)\chi(a)
   = N \sum_{\substack{x,y \in \Kx\\ x^{3m}y^{-3}a = 1}}\psi(x+y).
\]
Finally $x^{3m}y^{-3}a = 1$ is equivalent to $y^{3} = a\,x^{3m}$, which is the
indexing of the asserted inner sum.
\end{proof}

\begin{lemma}[Closed form of the phase]\label{lem:closedform}
For every $a \in \Kx$,
\[
  2S(a) = \bigl|\{b \in \Kx : b^{3} = a\}\bigr|
        + \sum_{x \in \Kx}\ \sum_{\substack{y \in \Kx\\ y^{3} = a x^{3m}}}
          \psi(x+y) .
\]
\end{lemma}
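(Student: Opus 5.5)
Start from the hypothesis \eqref{eq:walshhyp} and insert Lemma~\ref{lem:ratio} termwise, with $\theta=\chi^{3}$. This splits $2S(a)$ into two pieces:
\[
  2S(a) = \frac{Q}{N}\sum_{\chi}\frac{G(\theta^{m})G(\theta^{-1})}{Q}\,\chi(a)
        + \frac{Q}{N}\Bigl(1-\frac1Q\Bigr)\sum_{\chi:\ \chi^{3}=1}\chi(a).
\]
The factor $\frac{Q}{N}\cdot\frac1Q=\frac1N$ cancels against the $N$ produced by Lemma~\ref{lem:gaussprod}. Since $(\chi^{3})^{m}=\chi^{3m}$ and $(\chi^{3})^{-1}$ are exactly the characters appearing there, the first piece equals
\[
  \sum_{x \in \Kx}\ \sum_{\substack{y \in \Kx\\ y^{3}=a x^{3m}}}\psi(x+y).
\]
This is the double sum in the statement, with no further work.

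For the second piece, note that $\frac{Q}{N}\bigl(1-\frac1Q\bigr)=\frac{Q-1}{N}=1$. It remains to show that $\sum_{\chi^{3}=1}\chi(a)$ equals $|\{b\in\Kx : b^{3}=a\}|$. The characters with $\chi^{3}=1$ form the subgroup of the (cyclic) character group of order $c=\gcd(3,N)$. These are exactly the characters trivial on the subgroup $(\Kx)^{3}$ of cubes, which has index $c$. By orthogonality on the quotient $\Kx/(\Kx)^{3}$, their sum at $a$ is $c$ if $a$ is a cube and $0$ otherwise. By Lemma~\ref{lem:mu3basic}(1), if $a=A^{3}$ then the fibre $\{b : b^{3}=a\}=A\muthree$ has exactly $c$ elements, and the fibre is empty when $a$ is not a cube. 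So the two quantities agree.

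The only mildly delicate point is this identification of the characters with $\chi^{3}=1$ as the dual of $\Kx/(\Kx)^{3}$. I would handle it via a generator $g$ of $\Kx$: a character is determined by $\chi(g)$, an $N$-th root of unity, and $\chi^{3}=1$ forces $\chi(g)$ to be one of the $c$ cube roots of unity in $\C$. Writing $a=g^{j}$, the sum becomes $\sum_{\omega^{c}=1}\omega^{j}$, which is $c$ if $c\mid j$ and $0$ otherwise. Finally, $c\mid j$ is exactly the condition that $a$ is a cube. When $c=1$ both sides equal $1$ trivially, consistent with Lemma~\ref{lem:mu3basic}(2). Adding the two pieces gives the claimed closed form.
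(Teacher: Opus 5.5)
Your proposal is correct and follows the paper's proof. It uses the same termwise insertion of Lemma~\ref{lem:ratio}, with Lemma~\ref{lem:gaussprod} cancelling the factor $1/N$, and the same observation that the constant in front of $\sum_{\chi^{3}=1}\chi(a)$ is $\frac{Q}{N}(1-\frac1Q)=1$. The only variation is in that last count: you evaluate it via a generator of the cyclic group $\Kx$ together with Lemma~\ref{lem:mu3basic}(1), whereas the paper double-counts $N\,|\{b: b^{3}=a\}|$ using both orthogonality relations and the involution $\chi\mapsto\chi^{-1}$; either argument is fine.
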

\begin{proof}
Insert Lemma~\ref{lem:ratio} into \eqref{eq:walshhyp} and split the resulting
sum into the two contributions.  The first contributes
\[
  \frac{Q}{N}\cdot\frac1Q
    \sum_{\chi}G\bigl((\chi^{3})^{m}\bigr)G\bigl((\chi^{3})^{-1}\bigr)\chi(a)
  = \frac1N \cdot N\sum_{x \in \Kx}\sum_{\substack{y\in\Kx\\ y^{3}=ax^{3m}}}
      \psi(x+y)
\]
by Lemma~\ref{lem:gaussprod}, i.e.\ exactly the double sum in the statement.
The second contributes
\[
  \frac{Q}{N}\Bigl(1-\frac1Q\Bigr)\sum_{\chi^{3}=1}\chi(a)
   = \frac{Q-1}{N}\sum_{\chi^{3}=1}\chi(a)
   = \sum_{\chi^{3}=1}\chi(a),
\]
because $N = Q-1$.  It remains to identify $\sum_{\chi^{3}=1}\chi(a)$ with the
number of cube roots of $a$ in $\Kx$.  This is the standard count: by the two
orthogonality relations,
\[
  N\,\bigl|\{b : b^{3}=a\}\bigr|
   = \sum_{b \in \Kx}\sum_{\chi}\chi\bigl(b^{3}a^{-1}\bigr)
   = \sum_{\chi}\Bigl(\sum_{b\in\Kx}\chi^{3}(b)\Bigr)\chi(a^{-1})
   = N\sum_{\chi^{3}=1}\chi(a^{-1}),
\]
since $\sum_{b}\chi^{3}(b)$ is $N$ or $0$ according as $\chi^{3} = 1$ or not.
Dividing by $N$ and using that $\chi \mapsto \chi^{-1}$ is an involution of the
set $\{\chi : \chi^{3}=1\}$ with $\chi^{-1}(a^{-1}) = \chi(a)$, we get
$\sum_{\chi^{3}=1}\chi(a) = |\{b \in \Kx: b^{3}=a\}|$.
\end{proof}

\begin{lemma}[Vanishing at non-cubes]\label{lem:Snoncube}
If $a \in \Kx$ is not a cube in $\Kx$, then $S(a) = 0$.  (This is vacuous when
$c=1$, by Lemma~\ref{lem:mu3basic}(2).)
\end{lemma}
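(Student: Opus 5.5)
We prove the statement directly from the closed form of Lemma~\ref{lem:closedform}: both of its terms vanish when $a$ is not a cube. Fix $a \in \Kx$ that is not a cube, so necessarily $c = 3$. The first term, $|\{b \in \Kx : b^{3} = a\}|$, is $0$ by the very assumption that $a$ has no cube root in $\Kx$.

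For the double sum, we show that its index set is empty, i.e.\ that there are no $x,y \in \Kx$ with $y^{3} = a\,x^{3m}$. Suppose such a pair existed. Then
\[
  a = y^{3}\,(x^{3m})^{-1} = \bigl(y\,(x^{m})^{-1}\bigr)^{3},
\]
and $y(x^{m})^{-1} \in \Kx$ because $x \neq 0$. So $a$ would be a cube in $\Kx$, contradicting the hypothesis. Hence every inner sum over $\{y \in \Kx : y^{3} = a x^{3m}\}$ is empty, and the whole double sum is $0$.

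Combining the two, Lemma~\ref{lem:closedform} gives $2S(a) = 0 + 0$, so $S(a) = 0$, since we divide by $2$ in $\C$. The parenthetical remark about $c = 1$ is exactly Lemma~\ref{lem:mu3basic}(2): when $c = 1$ every element of $\Kx$ is a cube, so the hypothesis is never met. There is no real obstacle here; the only point requiring care is the factorisation $a = (y x^{-m})^{3}$, which uses $x \neq 0$, and this is guaranteed because the sum in Lemma~\ref{lem:closedform} runs over $\Kx$.
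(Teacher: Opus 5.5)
Your proof is correct and is exactly the paper's argument. Both terms of Lemma~\ref{lem:closedform} vanish: the first because $a$ has no cube root in $\Kx$, and the second because any $x,y\in\Kx$ with $y^{3}=a\,x^{3m}$ would make $a=\bigl(y\,(x^{m})^{-1}\bigr)^{3}$ a cube.
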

\begin{proof}
Both terms of Lemma~\ref{lem:closedform} vanish.  The first is the number of
cube roots of $a$, which is $0$ by hypothesis.  For the second, fix
$x \in \Kx$ and suppose $y \in \Kx$ satisfies $y^{3} = a\,x^{3m}$.  Then
\[
  \bigl(y\,(x^{m})^{-1}\bigr)^{3} = y^{3}\,x^{-3m} = a,
\]
so $a$ would be a cube, a contradiction; hence every inner index set is empty.
\end{proof}

\subsection{The phase at cubes is the Fourier transform of
\texorpdfstring{$\Phi$}{Phi}}\label{ssec:transform}

\begin{lemma}[Transform identity]\label{lem:twoScube}
For every $A \in \Kx$,
\[
  2S(A^{3}) = \widehat\Phi(A) = \sum_{t \in K}\Phi(t)\,\psi(At).
\]
\end{lemma}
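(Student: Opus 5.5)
We prove the identity by evaluating both sides in closed form and matching them term by term: the left side through Lemma~\ref{lem:closedform} at $a = A^{3}$, the right side through a change of variables in the $t$-sum.

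\emph{Left-hand side.} Apply Lemma~\ref{lem:closedform} with $a = A^{3}$. By Lemma~\ref{lem:mu3basic}(1), the first term $|\{b \in \Kx : b^{3} = A^{3}\}|$ equals $c$. In the double sum, fix $x \in \Kx$. The condition $y^{3} = A^{3}x^{3m} = (Ax^{m})^{3}$ says, again by Lemma~\ref{lem:mu3basic}(1), that $y \in Ax^{m}\muthree$. Hence
\[
  2S(A^{3}) = c + \sum_{x \in \Kx}\sum_{v \in \muthree}\psi\bigl(x + vAx^{m}\bigr).
\]

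\emph{Right-hand side.} Expand $\Phi$ to get
\[
  \widehat\Phi(A) = \sum_{u \in \muthree}\sum_{t \in K}\psi\bigl(ut^{D} + At\bigr).
\]
By Lemma~\ref{lem:powD}(1), the substitution $t = w^{m}$ (equivalently $w = t^{D}$) is a bijection of $K$ fixing $0$. It turns the sum into
\[
  \sum_{u \in \muthree}\sum_{w \in K}\psi\bigl(uw + Aw^{m}\bigr).
\]
The terms with $w = 0$ contribute $\sum_{u}\psi(0) = c$, which matches the first term on the left.

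\emph{Matching the remaining terms.} For $w \neq 0$ and fixed $u$, put $x = uw$; this is a bijection of $\Kx$. Then $w^{m} = u^{-m}x^{m}$, so the summand becomes $\psi\bigl(x + u^{-m}Ax^{m}\bigr)$. It remains to show that $u \mapsto u^{-m}$ is a bijection of $\muthree$.

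\begin{itemize}
\item If $c = 1$, this is trivial.
\item If $c = 3$, then $3 \mid N$, and $\gcd(m,N) = 1$ forces $3 \nmid m$. Hence $u \mapsto u^{-m}$ is an automorphism of the cyclic group $\muthree$ of order $3$.
\end{itemize}

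Reindexing by $v = u^{-m}$ turns the double sum into exactly $\sum_{x \in \Kx}\sum_{v \in \muthree}\psi(x + vAx^{m})$, which proves the lemma.

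The only delicate point is this last reindexing of $\muthree$. It uses the coprimality $\gcd(m,N)=1$, and it could also be deduced from Lemma~\ref{lem:mu3basic}(3), since $u^{-m}$ is the inverse map $v \mapsto (v^{m})^{-1}$ described there. Everything else is bookkeeping with bijections of $K$.
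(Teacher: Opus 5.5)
Your proof is correct and follows essentially the same route as the paper's: both evaluate $2S(A^{3})$ via Lemma~\ref{lem:closedform}, parametrise the cube-root fibre by $\mu_{3}$, absorb the constant $c$ into the zero terms (the paper's $x=0$, your $w=0$), and match the two sides by the substitution $x = v\,t^{D}$ together with a permutation of $\mu_{3}$. The only difference is direction: the paper starts from the left-hand side and reindexes by $u \mapsto (u^{-1})^{D}$, whereas you start from $\widehat\Phi(A)$ and reindex by the inverse map $u \mapsto u^{-m}$, which is exactly the inverse provided by Lemma~\ref{lem:mu3basic}(3).
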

\begin{proof}
Apply Lemma~\ref{lem:closedform} with $a = A^{3}$.

\emph{First term.}  By Lemma~\ref{lem:mu3basic}(1) the number of cube roots of
$A^{3}$ in $\Kx$ is exactly $c$.

\emph{Second term.}  Fix $x \in \Kx$.  The set
$\{y \in \Kx : y^{3} = A^{3}x^{3m}\}$ is the fibre of the cubing map through
$Ax^{m}$, hence equals $\{u\,A\,x^{m} : u \in \muthree\}$, again by
Lemma~\ref{lem:mu3basic}(1); the parametrisation $u \mapsto uAx^{m}$ is
injective.  Therefore
\[
  \sum_{\substack{y\in\Kx\\ y^{3}=A^{3}x^{3m}}}\psi(x+y)
   = \sum_{u \in \muthree}\psi\bigl(x + uAx^{m}\bigr),
\]
and, interchanging the two finite sums,
\[
  2S(A^{3}) = c + \sum_{u \in \muthree}\ \sum_{x \in \Kx}
     \psi\bigl(x+uAx^{m}\bigr).
\]

\emph{Restoring the point $x=0$.}  Since $m \ge 1$, the summand at $x = 0$ is
$\psi(0) = 1$ for each $u$, so
$\sum_{x\in K} = 1 + \sum_{x \in \Kx}$ termwise in $u$, and the $c$ ones
produced this way absorb the leading constant $c$:
\[
  2S(A^{3}) = \sum_{u \in \muthree}\ \sum_{x \in K}\psi\bigl(x+uAx^{m}\bigr).
\]

\emph{Substitution.}  Fix $u \in \muthree$ and put $\tau_{u} = (u^{-1})^{D}$,
which is nonzero.  The map $t \mapsto \tau_{u}t^{D}$ is a bijection of $K$,
being the composition of the bijection $t \mapsto t^{D}$
(Lemma~\ref{lem:powD}(1)) with multiplication by $\tau_{u} \neq 0$.  Writing
$x = \tau_{u}t^{D}$ we get, by Lemma~\ref{lem:powD}(1) again,
\[
  x^{m} = \bigl(\tau_{u}\bigr)^{m}\bigl(t^{D}\bigr)^{m}
        = u^{-1}\,t ,
  \qquad\text{hence}\qquad
  uAx^{m} = A\,t .
\]
Therefore
\[
  \sum_{x \in K}\psi\bigl(x+uAx^{m}\bigr)
   = \sum_{t \in K}\psi\bigl(\tau_{u}t^{D} + At\bigr).
\]

\emph{Reindexing $\muthree$.}  By Lemma~\ref{lem:mu3basic}(3),
$u \mapsto \tau_{u} = (u^{-1})^{D}$ permutes $\muthree$, so
\[
  2S(A^{3}) = \sum_{v \in \muthree}\sum_{t \in K}\psi\bigl(vt^{D}+At\bigr)
   = \sum_{t \in K}\Bigl(\sum_{v \in \muthree}\psi\bigl(vt^{D}\bigr)\Bigr)\psi(At)
   = \sum_{t\in K}\Phi(t)\psi(At) = \widehat\Phi(A),
\]
where the middle equality is $\psi(\alpha+\beta) = \psi(\alpha)\psi(\beta)$ and
an interchange of two finite sums.
\end{proof}

\begin{lemma}[Reindexing the triple sum]\label{lem:eightZ}
Let $A,B \in \Kx$.  Then
\[
  8\,Z(A^{3})\cdot c
   = \sum_{z \in \Kx}\widehat\Phi(z)\,\widehat\Phi(Az)\,\widehat\Phi(Bz),
\]
where $Z(A^{3})$ is the phase triple sum at $(\rho,\sigma) = (A^{3},B^{3})$.
\end{lemma}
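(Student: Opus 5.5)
The plan is to restrict the sum defining $Z(A^{3})$ to cubes, reparametrise the cubes by cube roots, and then apply the transform identity to each factor. Recall that, at the pair $(\rho,\sigma)=(A^{3},B^{3})$,
\[
  Z(A^{3}) = \sum_{\lambda \in \Kx} S(\lambda)\,S(A^{3}\lambda)\,S(B^{3}\lambda).
\]

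\emph{Step 1: restrict to cubes.} If $\lambda \in \Kx$ is not a cube, then $S(\lambda)=0$ by Lemma~\ref{lem:Snoncube}, so the $\lambda$-term vanishes. Hence only cubes $\lambda \in (\Kx)^{3}$ contribute. (When $c=1$ every element is a cube and this step is vacuous.)

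\emph{Step 2: lift to cube roots.} The cubing map $z \mapsto z^{3}$ sends $\Kx$ onto $(\Kx)^{3}$. By Lemma~\ref{lem:mu3basic}(1), each fibre is a coset $z\muthree$ of cardinality exactly $c$. Therefore, for any function $f$ on $\Kx$,
\[
  c\sum_{\lambda \in (\Kx)^{3}} f(\lambda) = \sum_{z \in \Kx} f(z^{3}).
\]
Applying this to $f(\lambda)=S(\lambda)S(A^{3}\lambda)S(B^{3}\lambda)$ and using $A^{3}z^{3}=(Az)^{3}$ and $B^{3}z^{3}=(Bz)^{3}$ gives
\[
  c\,Z(A^{3}) = \sum_{z \in \Kx} S(z^{3})\,S\bigl((Az)^{3}\bigr)\,S\bigl((Bz)^{3}\bigr).
\]

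\emph{Step 3: apply the transform identity.} Multiply by $8 = 2\cdot 2\cdot 2$ and distribute one factor $2$ to each of the three $S$-factors. Since $A,B,z \in \Kx$, the elements $z$, $Az$, $Bz$ all lie in $\Kx$. Lemma~\ref{lem:twoScube} therefore applies to each factor:
\[
  2S(z^{3})=\widehat\Phi(z),\qquad 2S\bigl((Az)^{3}\bigr)=\widehat\Phi(Az),\qquad 2S\bigl((Bz)^{3}\bigr)=\widehat\Phi(Bz).
\]
This yields exactly the asserted identity.

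\emph{Expected obstacle.} There is no real obstacle here; the only care needed is in the counting of Step~2. The multiplicity $c$ must be exactly the fibre size for every cube, which is what Lemma~\ref{lem:mu3basic}(1) provides. One should also note that the hypothesis $A^{3}+B^{3}=1$ is not needed for this lemma: only $A,B \neq 0$ is used, to keep $Az$ and $Bz$ nonzero.
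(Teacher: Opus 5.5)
Your proposal is correct and follows essentially the same route as the paper: it discards non-cubes via Lemma~\ref{lem:Snoncube}, uses the exact fibre size $c$ from Lemma~\ref{lem:mu3basic}(1) to pass to $\sum_{z\in\Kx} f(z^{3})$, and evaluates each factor with Lemma~\ref{lem:twoScube}. The only difference is packaging: the paper combines your Steps~1--2 into the single fibrewise identity $|\{z : z^{3}=\lambda\}|\,f(\lambda) = c\,f(\lambda)$ for all $\lambda\in\Kx$, and builds the factor $8$ into $f$ from the start.
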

\begin{proof}
Put $f(\lambda) = 8\,S(\lambda)S(A^{3}\lambda)S(B^{3}\lambda)$ for
$\lambda \in \Kx$, so that $8Z(A^{3}) = \sum_{\lambda \in \Kx}f(\lambda)$ by
the definition of the phase triple sum.

\emph{Values at cubes.}  For $z \in \Kx$ we have
$A^{3}z^{3} = (Az)^{3}$ and $B^{3}z^{3} = (Bz)^{3}$, so by
Lemma~\ref{lem:twoScube} applied at the three units $z$, $Az$, $Bz$,
\[
  f(z^{3}) = \bigl(2S(z^{3})\bigr)\bigl(2S((Az)^{3})\bigr)\bigl(2S((Bz)^{3})\bigr)
     = \widehat\Phi(z)\widehat\Phi(Az)\widehat\Phi(Bz).
\]

\emph{Fibrewise counting.}  Sorting the sum $\sum_{z \in \Kx}f(z^{3})$ by the
value $\lambda = z^{3}$,
\[
  \sum_{z \in \Kx}f(z^{3})
   = \sum_{\lambda \in \Kx}\bigl|\{z \in \Kx : z^{3}=\lambda\}\bigr|\,f(\lambda).
\]
For each $\lambda$ we claim
$|\{z : z^{3}=\lambda\}|\,f(\lambda) = c\,f(\lambda)$.  Indeed, if $\lambda$ is
a cube the cardinality is $c$ by Lemma~\ref{lem:mu3basic}(1); and if $\lambda$
is not a cube then $S(\lambda) = 0$ by Lemma~\ref{lem:Snoncube}, so
$f(\lambda) = 0$ and both sides vanish.  Hence
\[
  \sum_{z\in\Kx}\widehat\Phi(z)\widehat\Phi(Az)\widehat\Phi(Bz)
   = \sum_{z\in\Kx}f(z^{3})
   = c\sum_{\lambda\in\Kx}f(\lambda)
   = 8\,Z(A^{3})\,c . \qedhere
\]
\end{proof}

\subsection{Parseval and symmetrisation}\label{ssec:parseval}

\begin{lemma}[Vanishing mass]\label{lem:phihatzero}
$\sum_{t \in K}\Phi(t) = 0$, and consequently $\widehat\Phi(0) = 0$.
\end{lemma}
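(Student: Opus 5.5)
The plan is to compute $\sum_{t\in K}\Phi(t)$ directly from the definition of $\Phi$ and then read off $\widehat\Phi(0)$. Unfolding the definition and interchanging the two finite sums gives
\[
  \sum_{t \in K}\Phi(t) = \sum_{u \in \muthree}\ \sum_{t \in K}\psi\bigl(u\,t^{D}\bigr).
\]
For each fixed $u \in \muthree$, apply Lemma~\ref{lem:powD}(2) with $C = u$. Since $u^{3}=1$ forces $u \neq 0$, the inner sum vanishes. Hence every one of the $c$ summands is $0$, and so $\sum_{t}\Phi(t)=0$.

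For the consequence, note that by definition $\widehat\Phi(0) = \sum_{t\in K}\Phi(t)\psi(0\cdot t)$. Using $\psi(0)=1$, this equals $\sum_{t}\Phi(t)$, which is $0$ by the first part.

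There is no real obstacle here. The only points to check are that $0 \notin \muthree$, so that Lemma~\ref{lem:powD}(2) applies in its $C\neq 0$ case, and that the substitution $t\mapsto t^{D}$ underlying that lemma is a bijection of all of $K$, including $0$. Both facts are already recorded in Lemmas~\ref{lem:powD} and~\ref{lem:mu3basic}.
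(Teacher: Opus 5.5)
Your proposal is correct and matches the paper's proof essentially verbatim. Both interchange the sums, kill each inner sum $\sum_{t}\psi(ut^{D})$ by Lemma~\ref{lem:powD}(2) because $u \neq 0$, and then use $\psi(0\cdot t)=1$ to identify the total with $\widehat\Phi(0)$.
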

\begin{proof}
Interchanging the two sums and using Lemma~\ref{lem:powD}(2) for each
$u \in \muthree$, which is nonzero,
\[
  \sum_{t\in K}\Phi(t) = \sum_{u \in \muthree}\sum_{t \in K}\psi\bigl(ut^{D}\bigr)
   = \sum_{u \in \muthree}0 = 0 .
\]
Since $\psi(0\cdot t) = 1$, this is precisely $\widehat\Phi(0)$.
\end{proof}

\begin{lemma}[Parseval]\label{lem:parseval}
For all $A,B \in K$,
\[
  \sum_{z\in K}\widehat\Phi(z)\widehat\Phi(Az)\widehat\Phi(Bz)
   = Q \sum_{x,y \in K}\Phi(x)\Phi(y)\Phi(Ax+By).
\]
\end{lemma}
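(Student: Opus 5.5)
The plan is to expand the left-hand side by the definition of $\widehat\Phi$ and then collapse the resulting sum with additive orthogonality. Write
\[
  \widehat\Phi(z)\widehat\Phi(Az)\widehat\Phi(Bz)
  = \sum_{x,y,w \in K}\Phi(x)\Phi(y)\Phi(w)\,
    \psi(zw)\,\psi(Azx)\,\psi(Bzy),
\]
where the three summation variables are attached to the three transforms in the order that will make the final constraint read $w = Ax+By$.

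Since $\psi$ is additive, the product of characters equals $\psi\bigl(z(w+Ax+By)\bigr)$. Summing over $z \in K$ and interchanging the finite sums, I apply orthogonality of the primitive character $\psi$:
\[
  \sum_{z \in K}\psi(zC) = Q\cdot\mathbf 1_{C=0}.
\]
This is the same fact used in Theorem~\ref{thm:walshcount}, and it is also Lemma~\ref{lem:powD}(2) with $D=1$. The inner $z$-sum is therefore $Q$ exactly when $w = Ax+By$, using characteristic two so that $w + Ax + By = 0$ is the same as $w = Ax+By$, and $0$ otherwise. Hence the sum over $w$ collapses to the single term $w=Ax+By$, leaving
\[
  Q\sum_{x,y\in K}\Phi(x)\Phi(y)\Phi(Ax+By),
\]
which is the claimed right-hand side.

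No step is a genuine obstacle: the identity holds for arbitrary $A,B\in K$, including $0$, because nothing is divided and no bijection is required. The only point needing care is the bookkeeping of which variable pairs with which scaling, and this is fixed by the choice of labels in the first display. In particular, the identity needs no property of $\Phi$ itself, so it applies verbatim to any function $K\to\C$.
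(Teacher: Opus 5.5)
Your proposal is correct and matches the paper's proof: expand the three transforms, combine the characters into a single $\psi\bigl(z(w+Ax+By)\bigr)$, and collapse the $z$-sum by orthogonality of the primitive character $\psi$. The only difference is cosmetic: you label the variables so that the constraint reads $w=Ax+By$ directly, whereas the paper obtains $x=Ay+Bw$ and renames the variables at the end.
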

\begin{proof}
Expanding the three transforms and multiplying out,
\[
  \widehat\Phi(z)\widehat\Phi(Az)\widehat\Phi(Bz)
   = \sum_{x,y,w \in K}\Phi(x)\Phi(y)\Phi(w)\,
      \psi(zx)\psi(Azy)\psi(Bzw),
\]
and $\psi(zx)\psi(Azy)\psi(Bzw) = \psi\bigl(z(x+Ay+Bw)\bigr)$.  Summing over
$z \in K$ and interchanging sums, the orthogonality relation
$\sum_{z \in K}\psi(zs) = Q$ for $s = 0$ and $0$ for $s \neq 0$ (valid because
$\psi$ is primitive; this is the case $D = 1$ of the computation in
Lemma~\ref{lem:powD}(2)) gives
\[
  \sum_{z\in K}\widehat\Phi(z)\widehat\Phi(Az)\widehat\Phi(Bz)
  = Q\sum_{\substack{x,y,w\in K\\x+Ay+Bw=0}}\Phi(x)\Phi(y)\Phi(w).
\]
In characteristic two the constraint is $x=Ay+Bw$, so the last sum is
$\sum_{y,w}\Phi(Ay+Bw)\Phi(y)\Phi(w)$.  Renaming $(y,w)$ as $(x,y)$ and
reordering the factors proves the assertion.
\end{proof}

\begin{lemma}[Symmetrisation]\label{lem:symmetrisation}
For all $A,B \in K$,
\[
  \sum_{x,y \in K}\Phi(x)\Phi(y)\Phi(Ax+By)
   = c \sum_{u,v \in \muthree} W_{u,v}(A,B).
\]
\end{lemma}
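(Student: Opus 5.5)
The plan is to expand the product of the three $\Phi$'s into a triple sum over $\muthree^{3}$, and then use a scaling substitution to normalise the coefficient attached to $y^{D}$ to $1$. The $c$ values of that coefficient each contribute the same quantity $\sum_{u,v}W_{u,v}(A,B)$, which gives the factor $c$.

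\emph{Step 1 (expansion).} By the definition of $\Phi$ and $\psi(\alpha)\psi(\beta)\psi(\gamma)=\psi(\alpha+\beta+\gamma)$,
\[
  \sum_{x,y\in K}\Phi(x)\Phi(y)\Phi(Ax+By)
  = \sum_{a,b,g\in\muthree}\ \sum_{x,y\in K}
    \psi\bigl(a x^{D} + b y^{D} + g (Ax+By)^{D}\bigr),
\]
after interchanging finite sums.

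\emph{Step 2 (normalising $b$).} Fix $b\in\muthree$.
\begin{itemize}
\item The map $s\mapsto s^{D}$ permutes $\muthree$. This follows from Lemma~\ref{lem:mu3basic}(3) composed with inversion, which is itself a bijection of $\muthree$. Hence there is $s\in\muthree$ with $s^{D}=b$.
\item The substitution $x=s^{-1}x'$, $y=s^{-1}y'$ is a bijection of $K^{2}$, since $s\neq 0$.
\item By multiplicativity of the power map, $b\,y^{D} = b\,s^{-D}y'^{D} = y'^{D}$ and $a\,x^{D} = (ab^{-1})\,x'^{D}$.
\item Likewise $g(Ax+By)^{D} = g\,s^{-D}(Ax'+By')^{D} = (gb^{-1})(Ax'+By')^{D}$.
\end{itemize}
Consequently, for this fixed $b$, the inner double sum over $x,y$ is exactly $W_{ab^{-1},\,gb^{-1}}(A,B)$.

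\emph{Step 3 (reindexing).} By Lemma~\ref{lem:mu3basic}(4), the maps $a\mapsto ab^{-1}$ and $g\mapsto gb^{-1}$ are bijections of $\muthree$. So for each fixed $b$, the sum over $(a,g)$ equals $\sum_{u,v\in\muthree}W_{u,v}(A,B)$, which is independent of $b$. Summing over the $c$ elements $b\in\muthree$ gives the claimed identity.

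There is no real obstacle here. The only point requiring care is that $b$ is a $D$-th power inside $\muthree$, which is needed so the rescaling can absorb it. This rests on $\gcd(D,N)=1$ through Lemma~\ref{lem:mu3basic}(3). The identity holds for all $A,B\in K$, including $A=0$ or $B=0$, since no step divides by $A$ or $B$.
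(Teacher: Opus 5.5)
Your proof is correct and follows the paper's argument. Both expand the triple product into the sums $V(a,b,g)$, rescale $(x,y)$ by a common nonzero factor so that the coefficient of $y^{D}$ becomes $1$, reindex $a,g$ by multiplication with $b^{-1}$, and sum over the $c$ values of $b$.

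One small simplification: your $s^{-1}$ is exactly the paper's $\tau=(b^{-1})^{m}$, so you can take $s=b^{m}$ directly by Lemma~\ref{lem:powD}(1). The argument only uses $s\neq 0$ and $s^{D}=b$, not $s\in\muthree$.
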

\begin{proof}
For $a,b,g \in \muthree$ put
\[
  V(a,b,g) = \sum_{x,y \in K}
     \psi\bigl(a x^{D} + b y^{D} + g (Ax+By)^{D}\bigr),
\]
so that, expanding the three copies of $\Phi$,
\begin{equation}\label{eq:VVV}
  \sum_{x,y\in K}\Phi(x)\Phi(y)\Phi(Ax+By)
   = \sum_{a,b,g \in \muthree} V(a,b,g).
\end{equation}
Note that $V(u,1,v) = W_{u,v}(A,B)$.  Fix $b \in \muthree$ and set
$\tau = (b^{-1})^{m} \neq 0$, so $\tau^{D}=b^{-1}$ by Lemma~\ref{lem:powD}(1).
The substitution $(x,y)\mapsto(\tau x,\tau y)$ is bijective and gives
\[
  V(a,b,g) = V(ab^{-1},1,gb^{-1}).
\]
For fixed $b$, multiplication by $b^{-1}$ permutes $\muthree$ in both the
first and third coordinates.  Thus
\[
  \sum_{a,g\in\muthree}V(a,b,g)
   = \sum_{u,v\in\muthree}W_{u,v}(A,B),
\]
independently of $b$.  Summing over the $c$ choices of $b$ and using
\eqref{eq:VVV} proves the claim.
\end{proof}

\subsection{Weil sums are root counts}\label{ssec:weil}

We recall the definitions of $W_{u,v}(A,B)$ and $R_{u,v}(A,B)$ from
Definition~\ref{def:rootcount}:
\begin{align*}
  W_{u,v}(A,B) &= \sum_{x,y\in K}\psi\bigl(ux^{D}+v(Ax+By)^{D}+y^{D}\bigr),\\
  R_{u,v}(A,B) &= \bigl|\{t\in K:ut^{D}+v(At+B)^{D}=1\}\bigr|.
\end{align*}

\begin{lemma}[Weil sum evaluation]\label{lem:weil}
Let $u,v \in \muthree$ and $A,B \in K$ with $A^{3}\neq 1$.  Then
\[
  W_{u,v}(A,B)=Q\bigl(R_{u,v}(A,B)-1\bigr).
\]
\end{lemma}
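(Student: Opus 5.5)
The plan is to split $W_{u,v}(A,B)$ according to whether $y=0$ or $y\neq 0$. On the part $y\neq 0$, a homogeneity substitution turns the double sum into single character sums of the form $\sum_{y}\psi(Cy^{D})$, which Lemma~\ref{lem:powD}(2) evaluates. The part $y=0$ should vanish, and this is exactly where the hypothesis $A^{3}\neq 1$ is used.

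\emph{The part $y\neq 0$.} Fix $y\in\Kx$ and substitute $x=ty$; this is a bijection $t\mapsto ty$ of $K$. Since $D$-th powers are multiplicative,
\[
  (ty)^{D}=t^{D}y^{D}
  \qquad\text{and}\qquad
  (Aty+By)^{D}=(At+B)^{D}y^{D}.
\]
Hence the summand becomes $\psi\bigl(y^{D}C_{t}\bigr)$, where
\[
  C_{t}=u\,t^{D}+v\,(At+B)^{D}+1 .
\]
After interchanging the sums over $t$ and $y$, fix $t$. Lemma~\ref{lem:powD}(2) gives that the sum over all $y\in K$ equals $Q$ if $C_{t}=0$ and $0$ otherwise. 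Removing the term $y=0$, which contributes $\psi(0)=1$, the sum over $y\in\Kx$ equals $Q\cdot\mathbf 1_{C_{t}=0}-1$.

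In characteristic two, $C_{t}=0$ is precisely the root equation $ut^{D}+v(At+B)^{D}=1$. Summing over $t\in K$ therefore gives $Q\,R_{u,v}(A,B)-Q$ for this part.

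\emph{The part $y=0$.} This contribution is
\[
  \sum_{x\in K}\psi\bigl((u+vA^{D})x^{D}\bigr),
\]
using $(Ax)^{D}=A^{D}x^{D}$. By Lemma~\ref{lem:powD}(2) it equals $Q$ if $u+vA^{D}=0$ and $0$ otherwise. So it suffices to show that $u+vA^{D}\neq 0$.

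Suppose instead that $u+vA^{D}=0$. In characteristic two this means $A^{D}=u v^{-1}\in\muthree$. Since $\muthree$ is a group, it is closed under powers, so $A=(A^{D})^{m}\in\muthree$ by Lemma~\ref{lem:powD}(1). Then $A^{3}=1$, contradicting the hypothesis. The same argument covers $A=0$: then $u+vA^{D}=u\neq 0$.

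Adding the two parts gives $W_{u,v}(A,B)=Q\bigl(R_{u,v}(A,B)-1\bigr)$. The only nonroutine point is this nonvanishing of $u+vA^{D}$, which is the role of $A^{3}\neq 1$. Everything else is bookkeeping with Lemma~\ref{lem:powD}.
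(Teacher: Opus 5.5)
Your proof is correct and follows the paper's argument essentially line by line: you split off the slice $y=0$, substitute $x=ty$ on $y\neq 0$ to reduce to $\sum_{y}\psi(C_{t}y^{D})$ via Lemma~\ref{lem:powD}(2), and use $A^{3}\neq 1$ to kill the $y=0$ slice. The only cosmetic difference is how you show $u+vA^{D}\neq 0$: you deduce $A=(uv^{-1})^{m}\in\muthree$, whereas the paper cubes $u=vA^{D}$ to get $(A^{3})^{D}=1$, and these are the same argument.
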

\begin{proof}
If $u+vA^{D}=0$, cubing gives $(A^{3})^{D}=1$, hence $A^{3}=1$ by
Lemma~\ref{lem:powD}(1), a contradiction.  Thus the sum over $x$ in the slice
$y=0$ vanishes by Lemma~\ref{lem:powD}(2).  For $y\neq0$, substitute $x=ty$;
then
\[
  ux^{D}+v(Ax+By)^{D}+y^{D}
   = C(t)y^{D},\qquad C(t)=ut^{D}+v(At+B)^{D}+1.
\]
For fixed $t$, the sum over $y\neq0$ is $Q-1$ if $C(t)=0$ and $-1$ otherwise,
again by Lemma~\ref{lem:powD}(2).  Summing over $t$ gives
$W_{u,v}(A,B)=Q|\{t:C(t)=0\}|-Q$.  Since the characteristic is two,
$C(t)=0$ is equivalent to $ut^{D}+v(At+B)^{D}=1$, proving the formula.
\end{proof}

\subsection{Proof of Theorem~\ref{thm:phaseroot}}\label{ssec:assemblyroot}

\begin{proof}[Proof of Theorem~\ref{thm:phaseroot}]
First, $B \neq 0$: if $B = 0$ then $A^{3} = A^{3}+B^{3} = 1$, contradicting the
hypothesis $A^{3} \neq 1$.  (So the hypothesis $B \neq 0$ need not be
postulated separately.)  Thus $A,B \in \Kx$ and Lemma~\ref{lem:eightZ} applies.

By Lemma~\ref{lem:phihatzero} the summand of
$\sum_{z\in\Kx}\widehat\Phi(z)\widehat\Phi(Az)\widehat\Phi(Bz)$ vanishes at
$z = 0$ --- indeed already its first factor does --- so the sum over $\Kx$ may
be replaced by the sum over $K$.  Chaining
Lemmas~\ref{lem:eightZ}, \ref{lem:parseval}, \ref{lem:symmetrisation}
and~\ref{lem:weil} in this order,
\begin{align*}
  8\,Z(A^{3})\,c
   &= \sum_{z \in \Kx}\widehat\Phi(z)\widehat\Phi(Az)\widehat\Phi(Bz)
    = \sum_{z \in K}\widehat\Phi(z)\widehat\Phi(Az)\widehat\Phi(Bz)\\
   &= Q\sum_{x,y \in K}\Phi(x)\Phi(y)\Phi(Ax+By)
    = Q\,c\sum_{u,v \in \muthree}W_{u,v}(A,B)\\
   &= Q\,c\sum_{u,v\in\muthree}Q\bigl(R_{u,v}(A,B)-1\bigr)
    = c\,Q^{2}\Bigl(\sum_{u,v\in\muthree}R_{u,v}(A,B) - c^{2}\Bigr),
\end{align*}
the last step because the index set $\muthree \times \muthree$ has $c^{2}$
elements.  Since $c \ge 1$ we may cancel the factor $c$, and dividing by $8$,
\[
  Z(A^{3}) = \frac{Q^{2}}{8}
    \Bigl(\sum_{u,v \in \muthree}R_{u,v}(A,B) - c^{2}\Bigr),
\]
as asserted.  Note that the hypothesis $A^{3}+B^{3} = 1$ entered only through
$B \neq 0$; it is the normalisation under which the pair $(\rho,\sigma) =
(A^{3},B^{3})$ satisfies $\sigma = 1+\rho$, i.e.\ under which the left-hand
side is the phase triple sum of Definition~\ref{def:slopes}.
\end{proof}

Two consequences of Theorem~\ref{thm:phaseroot} drive the rest of the proof.

\begin{corollary}[Cube-support collapse]\label{cor:noncube}
Assume the Walsh formula of Theorem~\ref{thm:walshchar}.  If $\rho$ or
$1+\rho$ is not a cube in $K$, then $Z(\rho) = 0$.
\end{corollary}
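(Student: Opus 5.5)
The plan is to read the corollary directly off the definition of the phase triple sum combined with Lemma~\ref{lem:Snoncube}; the root-count identity itself is not needed. Recall that
\[
  Z(\rho) = \sum_{\lambda \in \Kx} S(\lambda)\,S(\rho\lambda)\,S\bigl((1+\rho)\lambda\bigr).
\]
It therefore suffices to show that every summand vanishes, i.e.\ that for each $\lambda \in \Kx$ at least one of the three factors is zero.

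First I dispose of the degenerate cases. If $\rho = 0$ or $\rho = 1$, then one of $\rho, 1+\rho$ is $0 = 0^{3}$ and the other is $1 = 1^{3}$, so both are cubes and the hypothesis cannot hold. Hence we may assume $\rho \notin \{0,1\}$, so that $\rho$ and $1+\rho$ are both units. By symmetry, suppose $\rho$ is not a cube in $K$; the argument for $1+\rho$ is identical.

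Fix $\lambda \in \Kx$ and split into two cases. If $\lambda$ is not a cube, then $S(\lambda) = 0$ by Lemma~\ref{lem:Snoncube}. If $\lambda = z^{3}$ is a cube, then $\rho\lambda$ is not a cube: otherwise $\rho\lambda = w^{3}$ would give $\rho = (w z^{-1})^{3}$, contradicting the choice of $\rho$. Since $\rho\lambda \in \Kx$, Lemma~\ref{lem:Snoncube} gives $S(\rho\lambda) = 0$. In either case the summand vanishes, so $Z(\rho) = 0$.

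The only thing to check is that Lemma~\ref{lem:Snoncube} is available here. It was derived from the closed form (Lemma~\ref{lem:closedform}), which needs only the all-character Walsh formula \eqref{eq:walshhyp}, and that is exactly the hypothesis of the corollary. There is no real obstacle; the argument is bookkeeping. I note that when $c = 1$ every element of $\Kx$ is a cube by Lemma~\ref{lem:mu3basic}(2), so the statement is vacuous for odd $n$.
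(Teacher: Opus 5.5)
Your proof is correct and is essentially the paper's argument: with $\rho$ (say) a non-cube, at least one of the units $\lambda$ and $\rho\lambda$ is a non-cube, so Lemma~\ref{lem:Snoncube} makes every summand of $Z(\rho)$ vanish. The paper first reduces to $c=3$ and then says that the cubes form an index-$3$ subgroup of $\Kx$. You instead check the non-cube claim elementwise and verify explicitly that $\rho\lambda \neq 0$, which amounts to the same reasoning.
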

\begin{proof}
If $c = 1$ every element is a cube and there is nothing to prove; so assume
$c=3$, i.e.\ $n$ even, and let (say) $\rho$ be a non-cube.  The cubes form a
subgroup of index $3$ of $\Kx$, so for each $\lambda \in \Kx$ at least one of
$\lambda$, $\rho\lambda$ is a non-cube; by Lemma~\ref{lem:Snoncube},
$S$ vanishes at non-cubes, so every summand of
$Z(\rho) = \sum_{\lambda}S(\lambda)S(\rho\lambda)S(\sigma\lambda)$ vanishes.
The case where $1+\rho$ is a non-cube is symmetric.
\end{proof}

\begin{corollary}[Positivity from root existence]\label{cor:positivity}
Assume the Walsh formula, let $\rho = A^{3}$ be an admissible cube slope with
$A, B \in K$ and $A \neq 0$ such that $A^{3}+B^{3} = 1$, and suppose that the
following holds:
\begin{equation}\label{eq:solvable}
  \text{for all } p,q \in \Kx \text{ with } p^{3}+q^{3}=1
  \text{ there exist } w,z \in K \text{ with } w+z=1,\; w^{m}+p z^{m} = q .
\end{equation}
Then $\ReX Z(\rho) \ge 0$.
\end{corollary}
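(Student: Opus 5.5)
The plan is to read off the sign of $Z(\rho)$ from Theorem~\ref{thm:phaseroot}. That theorem gives
\[
Z(A^{3})=\frac{Q^{2}}{8}\Bigl(\sum_{u,v\in\muthree}R_{u,v}(A,B)-c^{2}\Bigr).
\]
This is a real number, and it is nonnegative as soon as $R_{u,v}(A,B)\ge 1$ for each of the $c^{2}$ pairs $(u,v)$. Theorem~\ref{thm:phaseroot} applies here: $A\neq0$ and $A^{3}+B^{3}=1$ are assumed, and $A^{3}=\rho\neq 1$ because $\rho$ is admissible. Its proof also shows $B\neq 0$. So everything reduces to producing, for each fixed $u,v\in\muthree$, one $t\in K$ with $u\,t^{D}+v\,(At+B)^{D}=1$.

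To find $t$, rescale \eqref{eq:solvable}. Put
\[
p=\frac{u^{m}}{A\,v^{m}},\qquad q=\frac{B\,u^{m}}{A}.
\]
Both are nonzero since $A,B\neq0$. Because $u^{3}=v^{3}=1$, we have $p^{3}=A^{-3}$ and $q^{3}=B^{3}A^{-3}$. In characteristic two $1+B^{3}=A^{3}$, so $p^{3}+q^{3}=(1+B^{3})/A^{3}=1$. Hypothesis \eqref{eq:solvable} therefore supplies $w,z\in K$ with $w+z=1$ and $w^{m}+p\,z^{m}=q$.

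Now set $t=u^{-m}w^{m}$. By Lemma~\ref{lem:powD}(1), $(u^{-m})^{D}=((u^{-1})^{m})^{D}=u^{-1}$ and $(w^{m})^{D}=w$, so $u\,t^{D}=w$. Multiply the relation $w^{m}+p z^{m}=q$ by $A u^{-m}$. This gives $At+v^{-m}z^{m}=B$, i.e.\ $At+B=v^{-m}z^{m}$ in characteristic two. Hence $v\,(At+B)^{D}=v\cdot v^{-1}z=z$. Therefore $u\,t^{D}+v\,(At+B)^{D}=w+z=1$, and $t$ is a root, so $R_{u,v}(A,B)\ge1$.

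Summing over the $c^{2}$ pairs gives $\sum_{u,v}R_{u,v}(A,B)\ge c^{2}$, hence $\ReX Z(\rho)=Z(\rho)\ge 0$. The only delicate points are the exponent bookkeeping with $D$ and $m$ (handled by Lemma~\ref{lem:powD}(1)) and the check that $p^{3}+q^{3}=1$ uses $u^{3m}=v^{3m}=1$. The genuine difficulty is the hypothesis \eqref{eq:solvable} itself, which is deferred to Section~\ref{sec:geometry}.
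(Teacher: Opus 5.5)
Your proposal is correct and follows the paper's argument: you reduce via Theorem~\ref{thm:phaseroot} to showing $R_{u,v}(A,B)\ge 1$, then rescale the hypothesis to produce a root, and you correctly account for $B\neq 0$. The only difference is cosmetic. You pair $w$ with the $u$-term and divide by $A$, taking $p=u^{m}/(Av^{m})$, $q=Bu^{m}/A$, $t=(w/u)^{m}$. The paper pairs $z$ with the $u$-term, taking $p=v^{m}A/u^{m}$, $q=v^{m}B$, $t=(z/u)^{m}$.
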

\begin{proof}
By Theorem~\ref{thm:phaseroot} it suffices to show
$R_{u,v}(A,B)\ge 1$ for all $u,v \in \muthree$.  Put $a = u^{m}$, $b = v^{m}$;
then $a^{3} = b^{3} = 1$, and $a,b \neq 0$.  Apply \eqref{eq:solvable} to the
target
\[
  p = \frac{bA}{a}, \qquad q = bB,
\]
which is legitimate: $p,q \neq 0$ and
$p^{3}+q^{3} = A^{3}+B^{3} = 1$ because $a^{3}=b^{3}=1$.  We obtain $w,z$ with
$w+z=1$ and $w^{m} = pz^{m}+q$ (recall $-1 = 1$).  Set
\[
  t = \frac{z^{m}}{a} .
\]
Then $t^{D} = \bigl((z/u)^{m}\bigr)^{D} = z/u$, so $u t^{D} = z$.  Moreover
\[
  At+B = \frac{Az^{m}}{a}+B = \frac{1}{b}\bigl(p z^{m} + q\bigr)
       = \frac{w^{m}}{b} = \Bigl(\frac{w}{v}\Bigr)^{m},
\]
whence $(At+B)^{D} = w/v$ and $v(At+B)^{D} = w$.  Therefore
$u t^{D}+v(At+B)^{D} = z+w = 1$, i.e.\ $t$ is a root and
$R_{u,v}(A,B)\ge 1$.  Consequently
$\sum_{u,v}R_{u,v}\ge c^{2}$ and $Z(\rho) = \tfrac{Q^{2}}{8}(\sum R_{u,v}-c^{2})$
is a nonnegative real number.
\end{proof}

Condition \eqref{eq:solvable} is the only geometric input of the whole proof.
We call it $\mathrm{RootEqSolvable}(m,K)$ and establish it in the next section.

\section{The Fermat cubic and the root equation}\label{sec:geometry}

\begin{theorem}[Root existence]\label{thm:rootexistence}
Let $|K| = 2^{n}$, $\gcd(k,n)=1$ and $2^{k}+1 = 3m$, and in the even case also
$\gcd(m,2^{n}-1)=1$.  Then $\mathrm{RootEqSolvable}(m,K)$ holds: for all
$p,q \in \Kx$ with $p^{3}+q^{3}=1$ there are $w,z \in K$ with $w+z=1$ and
$w^{m}+pz^{m}=q$.
\end{theorem}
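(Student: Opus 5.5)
The plan is to turn $\mathrm{RootEqSolvable}(m,K)$ into a surjectivity statement for an endomorphism-type map on the $K$-points of the Fermat cubic $E: W^{3}+T^{3}=1$, as announced in Step~4 of the outline. The key observation is that $3m = 2^{k}+1$, so for any $w \in K$ we have $(w^{m})^{3} = w\cdot w^{2^{k}} = w\,\pi^{k}(w)$, where $\pi$ is the absolute Frobenius. Hence cubing the unknown quantities $w^{m}$ and $z^{m}$ converts the $m$-th powers into products of a coordinate with its $\pi^{k}$-conjugate, which is the kind of expression produced by the group law on $E$.

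\textbf{Step 1 (translation to $E$).} Given $w+z=1$, set $P=(p,q)$, so that $P\in E(K)$. I will search for a point $\mathcal{Q}=(W_{0},T_{0})\in E(K)$, built from $w,z$ and suitable cube roots, together with the chord--tangent (Hessian/Cauchy--Sylvester) addition law on $E$ with a fixed $3$-torsion flex $T_{3}$ as origin shift. The aim is to show that the relation $w^{m}+pz^{m}=q$ is equivalent to
\[
P = \Phi_{k}(\mathcal{Q}) = \boxminus\bigl(\mathcal{Q}\boxplus\pi^{k}\mathcal{Q}\bigr)\boxplus T_{3}.
\]
Concretely, I would use the explicit formula for $\boxminus(\mathcal{Q}_{1}\boxplus\mathcal{Q}_{2})$ on the Hessian model, which is bilinear-fractional in the coordinates. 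I would then check that specialising $\mathcal{Q}_{2}=\pi^{k}\mathcal{Q}_{1}$ produces exactly the products $w\,w^{2^{k}}=(w^{m})^{3}$ and $z\,z^{2^{k}}=(z^{m})^{3}$, up to the linear constraint $w+z=1$. Conversely, a rational preimage $\mathcal{Q}$ of $P$ should yield $w,z$ by reading off coordinates, with the $m$-th roots recovered uniquely because $x\mapsto x^{m}$ is bijective when $\gcd(m,2^{n}-1)=1$ (Lemma~\ref{lem:powD}). For odd $n$, cube roots are unique by Lemma~\ref{lem:mu3basic}(2). This step is a finite identity check, and I expect it to be fiddly but routine, including the degenerate points at infinity and $T_{3}$ itself. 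Showing that $\Phi_{k}\colon E(K)\to E(K)$ is surjective then finishes.

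\textbf{Step 2 (odd $n$).} Since $E(K)$ is finite, surjectivity is equivalent to injectivity. The map $\Phi_{k}$ differs from the endomorphism $-(1+\pi^{k})$ by a translation, so it suffices that $\ker(1+\pi^{k})\cap E(K)$ is trivial. On $E(K)$ the relative Frobenius $\pi^{n}$ acts as $1$. The curve $E$ is supersingular in characteristic two, with $\pi^{2}=-2$ in $\Z[\pi]\cong\Z[\sqrt{-2}]$. A point with $\pi^{k}\mathcal{Q}=-\mathcal{Q}$ and $\pi^{n}\mathcal{Q}=\mathcal{Q}$ is therefore killed by $\gcd$-type combinations of $1+\pi^{k}$ and $1-\pi^{n}$. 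Using $\gcd(k,n)=1$ and $n$ odd, I expect these combinations to generate the unit ideal, or at least an ideal of norm prime to $|E(K)|$, so that $\mathcal{Q}=O$.

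\textbf{Step 3 (even $n$, main obstacle).} Here $E(K)$ contains the full rational $3$-torsion, and $1+\pi^{k}$ has a nontrivial rational kernel, so the bijectivity argument fails. I will instead factor $1+\pi^{k}=(1+\pi)G$ in $\Z[\sqrt{-2}]$, which is possible since $k$ is odd. Then:
\begin{itemize}
\item I will show that $G$ has norm prime to $3$ and is injective on $E(K)$, again via an ideal computation with $1-\pi^{n}$, so that $G$ is bijective on $E(K)$.
\item This reduces the problem to showing that every twisted fibre of the degree-three map $\mathcal{Q}\mapsto\boxminus(1+\pi)\mathcal{Q}\boxplus T_{3}$ contains a $K$-point.
\item I would treat this map as a Hessian $3$-isogeny. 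The plan is to write its fibres explicitly in coordinates as the roots of a cubic. Then I would use the constraint $p^{3}+q^{3}=1$ together with $\gcd(m,2^{n}-1)=1$ (equivalently $k_{0}\not\equiv 3 \pmod 6$) to exhibit a rational root directly.
\end{itemize}
I expect this lifting through the degree-three quotient to be the hardest part. The cokernel of a $3$-isogeny on $E(K)$ is generally nontrivial, and the argument must use the specific target coset, that is, the shift by $T_{3}$ and the twist by $\muthree$, rather than a general surjectivity principle.
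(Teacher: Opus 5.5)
\textbf{Odd $n$.} Your branch is sound and matches the paper in substance. Modulo $\gamma=\gcd(1+\pi^{k},1-\pi^{n})$ in $\Z[\sqrt{-2}]$ one gets $\pi^{2k}\equiv\pi^{n}\equiv 1$, hence $\pi\equiv1$ because $\gcd(2k,n)=1$. That gives $2\equiv0$ and $-2=\pi^{2}\equiv1$, so $\gamma$ is a unit and $1+\pi^{k}$ is injective on $E(K)$. This is an algebraic repackaging of the paper's coordinate computation.

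\textbf{Even $n$: the gap.} Throughout you require the preimage $\mathcal Q$ to be $K$-rational: ``$\mathcal Q\in E(K)$'', ``$\Phi_k\colon E(K)\to E(K)$ surjective'', ``every twisted fibre contains a $K$-point'', ``exhibit a rational root''. For even $n$ this is impossible. With $k$ odd, the three points at infinity $P_a$ are $K$-rational and satisfy $\pi^{k}P_a=P_{a^{2}}=\boxminus P_a$. Hence $\ker(1+\pi^{k})\cap E(K)$ has order $3$, and $\Phi_k(E(K))$ has only $|E(K)|/3$ elements. The targets $(p,q)$ with $p,q\neq0$, however, are all of $E(K)\setminus E[3]$. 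Since $|E(K)|=((-2)^{n/2}-1)^{2}$, already for $n=6$ there are $72$ targets but only $27$ image points, so most targets have no $K$-rational preimage. Equivalently, after inverting $G$, the fibre of $1+\pi$ over $Y=G^{-1}(T_3\boxminus P)$ is $\{\mathcal Q\boxplus P_a\}$. On it $\pi^{n}$ acts as translation by some $P_\alpha$, and when $\alpha\neq1$ none of its points is rational. Neither the shape of the target set nor $\gcd(m,2^{n}-1)=1$ can repair this: the targets meet every coset of the index-$3$ subgroup $(1+\pi^{k})E(K)$, and the gcd hypothesis is already used up in making $G$ bijective. So your even-$n$ branch aims at a statement that is false for $n\ge6$.

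\textbf{The missing idea.} You need a descent through the degree-three quotient, not a rational lift. In your own Step~1 the coordinates of $\mathcal Q$ are cube roots $W,T$ of $w,z$. The identity actually used is $W^{2^{k}+1}=(W^{3})^{m}=w^{m}$, not $(w^{m})^{3}=w^{2^{k}+1}$. So only $W^{3}$ and $T^{3}$ must lie in $K$, and these are invariant under translation by points at infinity, $(W,T)\mapsto(aW,a^{2}T)$.

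Take any $\mathcal Q\in E(\overline K)$ with $\mathcal Q\boxplus\pi\mathcal Q=Y$. Then $(1+\pi)(\pi^{n}\mathcal Q\boxminus\mathcal Q)=O$ forces $\pi^{n}\mathcal Q=\mathcal Q\boxplus P_\alpha$, so $W^{3}$ and $T^{3}$ are fixed by $\pi^{n}$. The paper makes this explicit: $W^{3}=(x+1)/(x+y)$ and $T^{3}=(y+1)/(x+y)$ for $Y=(x,y)$ (Lemma~\ref{lem:quotient}).

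Your ``routine'' identity check must then be carried out for such twisted, non-rational $\mathcal Q$ over $\overline K$. In particular, excluding the total degeneracy $D_0=N_x=N_y=0$ of the Hessian data of $\mathcal Q,\pi^{k}\mathcal Q$ must be proved from the twist relation rather than from $K$-rationality. This is where the paper uses $\gcd(k,n)=1$ (Lemma~\ref{lem:tw-nodegen}).
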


The two cases, $n$ even and $n$ odd, use the same geometric setup and differ in how a preimage under
$1+\pi^{k}$ is produced.

\subsection{The curve, its charts, and the addition law}

Let $E$ be the projective Fermat cubic $X^{3}+Y^{3}+Z^{3}=0$ over $K$ with
origin $O = (1:1:0)$. For the addition on $E(K)$, we use the notation $P_1\boxplus P_2$ for the sum of points $P_1$ and $P_2$, $\boxminus P$ for the inverse of a point $P$, $P_1\boxminus P_2 := P_1\boxplus(\boxminus P_2)$ for their difference, and $[j]P$ for the $j$-fold sum of a point $P$. It is immediate that the points at infinity of $E$ are $(1:a:0)$, where $a^3=1$. The inflection points of $E$ are $(1:a:0)$, $(0:1:a)$, $(a:0:1)$ with $a^3=1$; these form the 3-torsion subgroup $E[3]$. In particular, the number of $K$-rational $3$-torsion points is $3$ or $9$, depending on whether $n$ is odd or even.

The projectivised Hessian formula allows us to describe the group law in a simple form; see \cite[p. 425]{Chudnovsky}. In projective coordinates, for two points
\[
 P_1=(X_1:Y_1:Z_1),\qquad P_2=(X_2:Y_2:Z_2),
\]
on $E$, we define
\begin{align*}
 X_3&=Y_1^2X_2Z_2+Y_2^2X_1Z_1,\\
 Y_3&=X_1^2Y_2Z_2+X_2^2Y_1Z_1,\\
 Z_3&=X_1Y_1Z_2^2+X_2Y_2Z_1^2.
\end{align*}
Whenever $(X_3,Y_3,Z_3)\ne(0,0,0)$, the formula returns
$P_1\boxplus P_2=(X_3:Y_3:Z_3)$.  The exceptional locus is exactly the set of
pairs whose difference is a point at infinity.  Indeed, if all three displayed
coordinates vanish and one of $X_1,Y_1,Z_1$ is zero, the Fermat equation together
with those identities forces the corresponding coordinate of $P_2$ to vanish
as well, and the difference is one of the points $(1:a:0)$ with $a^3=1$.  If
all coordinates are nonzero, after scaling to $Z_1=Z_2=1$ the vanishing
identities give $Y_2=Y_1a$ and $X_2=X_1a^{-1}$ for some $a^3=1$, again
equivalent to $P_1\boxminus P_2=(1:a:0)$.  The converse follows by substituting
the translation formula below.
Inverse and doubling are particularly simple in characteristic $2$:
\[
\boxminus (X:Y:Z) = (Y:X:Z),\qquad [2](X:Y:Z) = (Y^4: X^4: Z^4).
\]
We summarise these formulas in the following proposition for affine points on the Fermat cubic.

\begin{proposition}[Affine Hessian addition laws]\label{prop:hessian}
For two affine Fermat points $(w_{1},t_{1})$, $(w_{2},t_{2})$ define the
Hessian quantities
\[
  D_{0} = w_{1}t_{1}+w_{2}t_{2},
  \qquad
  N_{x} = t_{1}^{2}w_{2}+t_{2}^{2}w_{1},
  \qquad
  N_{y} = w_{1}^{2}t_{2}+w_{2}^{2}t_{1}.
\]
\begin{enumerate}
  \item The quantities $D_0$, $N_x$, and $N_y$ vanish simultaneously if and only if $(w_1,t_1) \boxminus (w_2,t_2)$ is a point at infinity.
  \item An affine Fermat point is $3$-torsion if and only if one of its coordinates vanishes. 
  \item If $D_{0}\neq 0$ then
  \[
    (w_{1},t_{1}) \boxplus (w_{2},t_{2})
    = \Bigl(\frac{N_{x}}{D_{0}},\ \frac{N_{y}}{D_{0}}\Bigr)
  \]
  in the group $E(K)$.  
  \item Inversion and doubling are given by
  \[
    \boxminus (w,t) = (t,w),\qquad [2](w,t) = (t^4, w^4).
  \]
  \item Translation by a point at infinity $P_{a} = (1:a:0)$ is
  \[
    (w,t)\boxplus P_{a} = (aw,\ a^{2}t) \qquad (a^{3}=1),
  \]
  and $P_{a} \in E[3]$ for every $a$ with $a^{3}=1$.  
\end{enumerate}
\end{proposition}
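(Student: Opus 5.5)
The plan is to identify an affine Fermat point $(w,t)$ with the projective point $(w:t:1)$ on $E$. This is legitimate because with $Z=1$ the equation $X^{3}+Y^{3}+Z^{3}=0$ reads $w^{3}+t^{3}=1$ in characteristic two. Each item of the proposition is then a specialisation of the projective Hessian formulas displayed above, which we take from \cite[p.~425]{Chudnovsky} together with the description of the exceptional locus given just before the statement.

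For (1) and (3), put $Z_{1}=Z_{2}=1$, $X_{i}=w_{i}$, $Y_{i}=t_{i}$ in the projective addition law. This gives
\[
X_{3}=t_{1}^{2}w_{2}+t_{2}^{2}w_{1}=N_{x},\qquad
Y_{3}=w_{1}^{2}t_{2}+w_{2}^{2}t_{1}=N_{y},\qquad
Z_{3}=w_{1}t_{1}+w_{2}t_{2}=D_{0}.
\]
Item (1) is then exactly the statement that the exceptional locus is the set of pairs whose difference is a point at infinity. For (3), if $D_{0}\neq 0$ the output $(N_{x}:N_{y}:D_{0})$ is a genuine projective point, and dehomogenising by $D_{0}$ gives the affine formula.

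For (2), we use the listed inflection points, which form $E[3]$. The affine ones, those with $Z\ne0$, are $(0:1:a)$ and $(a:0:1)$ with $a^{3}=1$, that is, $(0,a^{-1})$ and $(a,0)$. Each has a vanishing coordinate. Conversely, if $w=0$ then $t^{3}=1$, so $(0,t)=(0:1:t^{-1})$ is in the list, and similarly if $t=0$.

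For (4), inversion is the projective rule $(X:Y:Z)\mapsto(Y:X:Z)$ read affinely. Doubling cannot be read off the addition formula, since with $P_{1}=P_{2}$ all three coordinates equal $2(\cdot)=0$ in characteristic two. Instead we would invoke the Hessian doubling formula
\[
[2](X:Y:Z)=\bigl(Y(X^{3}-Z^{3}):X(Z^{3}-Y^{3}):Z(Y^{3}-X^{3})\bigr)
\]
from the same source. In characteristic two the curve equation gives $X^{3}+Z^{3}=Y^{3}$, $Z^{3}+Y^{3}=X^{3}$ and $Y^{3}+X^{3}=Z^{3}$, so the formula collapses to $(Y^{4}:X^{4}:Z^{4})$. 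At $Z=1$ this is $(t^{4},w^{4})$.

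For (5), apply the addition law with $P_{1}=(w:t:1)$ and $P_{2}=(1:a:0)$. This gives
\[
X_{3}=a^{2}w,\qquad Y_{3}=t,\qquad Z_{3}=a.
\]
Since $Z_{3}=a\neq0$ we are off the exceptional locus. Dividing by $a$ and using $a^{-1}=a^{2}$ gives $(aw,\,a^{2}t)$. Finally, $P_{a}$ is one of the listed inflection points, hence lies in $E[3]$.

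The main obstacle is the reliance on the cited projective formulas rather than any calculation here. One must check that the conventions of \cite{Chudnovsky} (origin $(1:1:0)$, curve $X^{3}+Y^{3}+Z^{3}=0$) match ours, and that the tangent-doubling formula is quoted correctly. If this is in doubt, the fallback is a direct chord-and-tangent verification for the doubling formula alone, the rest being pure substitution.
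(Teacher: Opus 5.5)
Your proposal is correct and follows the paper's own route: set $Z_1=Z_2=1$ in the displayed projective Hessian formulas and use the stated descriptions of the exceptional locus and of $E[3]$ as the inflection points. Your extra observation is a useful supplement rather than a different approach. In characteristic two the addition formula degenerates at $P_1=P_2$, so doubling has to come from the separate tangent formula, which the curve equation then reduces to $(Y^4:X^4:Z^4)$. The paper leaves this step implicit.
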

\begin{proof}
All assertions follow by direct computation using the formulas above with affine coordinates $(w_i,t_i)=(X_i/Z_i,Y_i/Z_i)$ of $P_i$ for $i=1,2$. 
\end{proof}

Let $\pi$ denote the Frobenius $x \mapsto x^{2}$, both on $K$ and as a group endomorphism of $E$; in Fermat coordinates $\pi(w,t) = (w^{2},t^{2})$, and $\pi(P_{a}) = P_{a^{2}}$. For an integer $\ell$, $1+\pi^\ell$ denotes the endomorphism $P \mapsto P \boxplus \pi^\ell(P)$.
Three structural facts about $\pi$ are used repeatedly:

\begin{proposition}\label{prop:frobfacts}
\begin{enumerate}
\item (CM relation) $\pi^{2} = [-2]$ on $E(L)$ for every field $L \supseteq
      \F_{2}$ of characteristic two.
\item $\ker(1+\pi)$ consists exactly of the three points at infinity
      $P_{a}$, $a^{3}=1$, over any field of characteristic two.
\item If $|K| = 2^{n}$ then $\pi^{n} = \mathrm{id}$ on $E(K)$.
\end{enumerate}
\end{proposition}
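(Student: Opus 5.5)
The three assertions are all formal consequences of the explicit formulas recorded above, so the plan is to verify each one by direct computation in projective coordinates, over an arbitrary field $L$ of characteristic two. First I note why $\pi$ is a group endomorphism of $E$. The curve $X^{3}+Y^{3}+Z^{3}=0$ and the Hessian addition formulas have coefficients in $\F_{2}$, and the origin $O=(1:1:0)$ is $\F_{2}$-rational. Hence coordinatewise squaring maps $E(L)$ to itself, sends $O$ to $O$, and commutes with $\boxplus$ (and with $\boxminus$).

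\emph{Item (3).} If $|K|=2^{n}$, every coordinate $x\in K$ satisfies $x^{2^{n}}=x$. Therefore $\pi^{n}(X:Y:Z)=(X^{2^{n}}:Y^{2^{n}}:Z^{2^{n}})=(X:Y:Z)$ for every $K$-rational point, including the points at infinity.

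\emph{Item (1).} I will combine the inversion and doubling formulas $\boxminus(X:Y:Z)=(Y:X:Z)$ and $[2](X:Y:Z)=(Y^{4}:X^{4}:Z^{4})$:
\[
  [-2](X:Y:Z)=\boxminus[2](X:Y:Z)=\boxminus(Y^{4}:X^{4}:Z^{4})=(X^{4}:Y^{4}:Z^{4})=\pi^{2}(X:Y:Z).
\]
The one point requiring care is that the doubling formula is valid at every point of $E(L)$, including the points at infinity and the $3$-torsion. I would check this by substituting $P_{1}=P_{2}$ into the projective Hessian formulas: in characteristic two this gives $(X^{4}:Y^{4}:Z^{4})$ up to the swap, after dividing by a common factor. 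I would also check separately that it sends $P_{a}\mapsto P_{a^{-1}}=P_{a^{2}}$, consistent with $[2]P_{a}=\boxminus P_{a}$ for $3$-torsion. I expect this verification to be the only mildly delicate step, since the general addition formula has an exceptional locus.

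\emph{Item (2).} A point $P$ lies in $\ker(1+\pi)$ iff $\pi(P)=\boxminus P$, i.e.\ $(X^{2}:Y^{2}:Z^{2})=(Y:X:Z)$.

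For $Z=0$ we have $P=(1:a:0)$ with $a^{3}=1$. Then $\pi(P)=(1:a^{2}:0)$ and $\boxminus P=(a:1:0)=(1:a^{-1}:0)=(1:a^{2}:0)$, so all three points at infinity lie in the kernel.

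For an affine point, scale to $Z=1$; the equation becomes $w^{2}=t$, $t^{2}=w$, hence $w^{4}=w$. If $w=0$ then $t=0$, but $(0,0)$ is not on $w^{3}+t^{3}=1$. Otherwise $w^{3}=1$ and $t=w^{2}$, giving $w^{3}+t^{3}=1+1=0\neq1$, again not on the curve. So no affine point lies in the kernel, and the kernel is exactly $\{P_{a}:a^{3}=1\}$.

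This argument uses only the characteristic, so it holds over any field of characteristic two.
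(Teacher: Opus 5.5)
Your proof is correct and follows the paper's route. Item (3) comes from $x^{2^{n}}=x$. Item (2) comes from solving $\pi(P)=\boxminus P$ in coordinates; your reduction to $w^{2}=t$, $t^{2}=w$ is a slightly tidier form of the paper's computation. Item (1) comes from composing the inversion and doubling formulas, with the points at infinity handled through their $3$-torsion exactly as the paper does.

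One step of your plan would fail, though. You cannot check the doubling formula by setting $P_{1}=P_{2}$ in the Hessian addition law. In characteristic two this gives $X_{3}=2Y^{2}XZ=0$ and likewise $Y_{3}=Z_{3}=0$. The diagonal lies in the exceptional locus, since $P\boxminus P=O$ is a point at infinity, so there is no common factor to cancel.

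You have two ways to repair this. The first is to take $[2](X:Y:Z)=(Y^{4}:X^{4}:Z^{4})$ as given, as the paper does. The second is to obtain (1) directly from the tangent line. In characteristic two the tangent at $P=(X:Y:Z)$ is $X^{2}U+Y^{2}V+Z^{2}W=0$. The point $(X^{4}:Y^{4}:Z^{4})$ lies on it and on $E$, because $X^{6}+Y^{6}+Z^{6}=(X^{3}+Y^{3}+Z^{3})^{2}$ and $X^{12}+Y^{12}+Z^{12}=(X^{3}+Y^{3}+Z^{3})^{4}$. Since $O$ is a flex, this says $[2]P\boxplus\pi^{2}P=O$, i.e.\ $\pi^{2}=[-2]$, uniformly at all points. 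If $\pi^{2}P=P$, then $P$ is $\F_{4}$-rational, hence one of the nine flexes, and the identity is trivial.
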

\begin{proof}
(1) follows from Proposition~\ref{prop:hessian}(4) for affine Fermat points. Points at infinity are 3-torsion and defined over $\mathbb{F}_4$, hence fixed by $[-2]$ and $\pi^2$.  (2) Working with homogeneous coordinates,
$(1+\pi)(X:Y:Z) = O$ means $\pi(X:Y:Z) = \boxminus(X:Y:Z) = (Y:X:Z)$, i.e.\ there is
$c \neq 0$ with $X^{2} = cY$, $Y^{2}=cX$, $Z^{2}=cZ$.  If $Z \neq 0$ then
$c = Z$, so $X^{4} = Z^{2}Y^{2} = Z^{3}X$, i.e.\ the affine coordinates
$x = X/Z$ and $y = Y/Z$ satisfy $x^{4}=x$ and $y^{4}=y$, hence lie in $\F_{4}$
and have cubes in $\{0,1\}$.  The affine equation $x^{3}+y^{3}=1$ then forces
$x = 0$ or $y = 0$; but $x = 0$ gives $X = 0$ and then $ZY = X^{2} = 0$, so
$Y = 0$ as well, contradicting $x^{3}+y^{3}=1$, and symmetrically for $y=0$.
Hence $Z=0$, and conversely every point at infinity is in the kernel.  (3) Every coordinate of a $K$-rational point
satisfies $x^{2^{n}} = x$.
\end{proof}

\subsection{The incidence chart and the twisted root equation}

 Let $T_{3} = (1,0)$, an affine Fermat point that is $3$-torsion.  For an affine
 Fermat point
$\mathcal{Q} = (W,T)$ define
\[
  \Phi_{k}(\mathcal{Q}) = \boxminus\bigl(\mathcal{Q} \boxplus \pi^{k}\mathcal{Q}\bigr) \boxplus T_{3}.
\]
Here $\pi^{k}\mathcal{Q} = (W^{2^{k}}, T^{2^{k}})$.

\begin{theorem}[Twisted root equation]\label{thm:twisted}
Let $W,T$ be nonzero elements in the algebraic closure of $K$ with $W^{3}+T^{3}=1$, let $\gcd(k,n)=1$, suppose
\[
  \pi^{n}\mathcal{Q} = \mathcal{Q} \boxplus P_{\alpha} \quad\text{for some } \alpha^{3}=1
  \qquad (\mathcal{Q} = (W,T)),
\]
and suppose $\Phi_{k}(\mathcal{Q})$ is not $3$-torsion.  Then $\Phi_{k}(\mathcal{Q})$ is an affine
Fermat point $(p,q)$ with $p,q \neq 0$ and
\[
  W^{2^{k}+1} + p\,T^{2^{k}+1} = q .
\]
\end{theorem}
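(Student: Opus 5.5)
The plan is to compute $\Phi_{k}(\mathcal{Q})$ explicitly with the affine Hessian formulas of Proposition~\ref{prop:hessian}, and then check the identity $W^{2^{k}+1}+pT^{2^{k}+1}=q$ by a short polynomial computation. The hypotheses $\pi^{n}\mathcal{Q}=\mathcal{Q}\boxplus P_{\alpha}$ and $\gcd(k,n)=1$ will only be used to exclude the exceptional locus of the addition formula.

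\emph{Step 1: the generic computation.} Write $W'=W^{2^{k}}$ and $T'=T^{2^{k}}$, so $\pi^{k}\mathcal{Q}=(W',T')$ and $W'^{3}+T'^{3}=(W^{3}+T^{3})^{2^{k}}=1$. With
\[
  D_{0}=WT+W'T',\qquad N_{x}=T^{2}W'+T'^{2}W,\qquad N_{y}=W^{2}T'+W'^{2}T,
\]
Proposition~\ref{prop:hessian}(3) gives $\mathcal{Q}\boxplus\pi^{k}\mathcal{Q}=(N_{x}/D_{0},N_{y}/D_{0})$ when $D_{0}\neq0$. By part (4), $\boxminus$ swaps the coordinates. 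Adding $T_{3}=(1,0)$ with the same formula gives $(w,t)\boxplus(1,0)=(t/w,\,1/w)$ whenever $wt\neq0$. Hence
\[
  \Phi_{k}(\mathcal{Q})=(p,q)=\Bigl(\frac{N_{x}}{N_{y}},\ \frac{D_{0}}{N_{y}}\Bigr).
\]
The target identity then becomes $N_{y}WW'+N_{x}TT'=D_{0}$. Expanding the left side gives
\[
  W'T'(W^{3}+T^{3})+WT(W'^{3}+T'^{3})=W'T'+WT=D_{0},
\]
which is the identity we need.

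\emph{Step 2: degenerate cases excluded by the non-$3$-torsion hypothesis.} By Proposition~\ref{prop:hessian}(2),(5), an affine point is $3$-torsion iff a coordinate vanishes, and the points at infinity are $3$-torsion.
\begin{itemize}
\item If $D_{0}=0$ but $(N_{x},N_{y},D_{0})\neq0$, the projective formula puts $\mathcal{Q}\boxplus\pi^{k}\mathcal{Q}$ at infinity. Then it is $3$-torsion, and so is $\Phi_{k}(\mathcal{Q})$, since $T_{3}\in E[3]$.
\item If $D_{0}\neq0$ but $N_{y}=0$ (or $N_{x}=0$), then $\boxminus(\mathcal{Q}\boxplus\pi^{k}\mathcal{Q})$ has a zero coordinate, so it is $3$-torsion, and again so is $\Phi_{k}(\mathcal{Q})$.
\end{itemize}
The hypothesis therefore forces $D_{0},N_{x},N_{y}\neq0$. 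In particular $p,q\neq0$ and $\Phi_{k}(\mathcal{Q})$ is affine, so the computation in Step 1 is legitimate.

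\emph{Step 3: the exceptional locus (the main obstacle).} It remains to rule out $D_{0}=N_{x}=N_{y}=0$. By Proposition~\ref{prop:hessian}(1) this means $\pi^{k}\mathcal{Q}=\mathcal{Q}\boxplus P_{a}$ for some $a^{3}=1$.
\begin{itemize}
\item Since $\pi$ is a group endomorphism with $\pi(P_{a})=P_{a^{2}}$, induction gives $\pi^{jk}\mathcal{Q}=\mathcal{Q}\boxplus P_{a_{j}}$ for every $j\ge1$, with $a_{j}^{3}=1$.
\item In the same way the hypothesis gives $\pi^{ln}\mathcal{Q}=\mathcal{Q}\boxplus P_{\alpha_{l}}$.
\item Choose $j,l\ge0$ with $jk=1+ln$, possible since $\gcd(k,n)=1$. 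Comparing $\pi^{jk}\mathcal{Q}=\pi(\pi^{ln}\mathcal{Q})=\pi\mathcal{Q}\boxplus P_{\alpha_{l}^{2}}$ with the first point gives $\pi\mathcal{Q}=\mathcal{Q}\boxplus P_{b}$ for some $b^{3}=1$.
\end{itemize}
By Proposition~\ref{prop:hessian}(5) this reads $(W^{2},T^{2})=(bW,b^{2}T)$. Since $W,T\neq0$, we get $W=b$ and $T=b^{2}$, hence $W^{3}+T^{3}=1+1=0\neq1$, a contradiction.

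I expect this last step to need the most care. One must track the cube roots of unity through the Frobenius iterations, make sure the projective formula's exceptional locus is exactly the one described in Proposition~\ref{prop:hessian}(1), and handle the fact that $\mathcal{Q}$ lives over the algebraic closure rather than over $K$.
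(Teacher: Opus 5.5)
Your proposal is correct. Steps~1 and~2 follow the paper's own argument: the identity $N_{y}WW'+N_{x}TT'=D_{0}$ is Lemma~\ref{lem:tw-weighted}, and the exclusion of partial vanishing through the non-$3$-torsion hypothesis is Lemma~\ref{lem:tw-nonvanishing}. The paper likewise obtains $\Phi_{k}(\mathcal{Q})=(N_{x}:D_{0}:N_{y})$ by adding $T_{3}$ to $(N_{y}:N_{x}:D_{0})$.

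You differ in how you exclude the totally degenerate case, which is Lemma~\ref{lem:tw-nodegen}. The paper stays in coordinates: $D_{0}=0$ says $(WT)^{2^{k}}=WT$, and Lemma~\ref{lem:tw-unfold} gives $(WT)^{2^{n}}=WT$, so $WT\in\F_{2}$, i.e.\ $WT=1$. Then $N_{x}=0$ becomes $(W^{3})^{2^{k}-1}=1$, which together with $(W^{3})^{2^{n}-1}=1$ forces $W^{3}=1$ and hence $T=0$. You instead use Proposition~\ref{prop:hessian}(1) to read the degenerate case as $\pi^{k}\mathcal{Q}\equiv\mathcal{Q}$ modulo the subgroup of points at infinity. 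You then run Bézout on the Frobenius exponents inside the group and finish with one application of Proposition~\ref{prop:hessian}(5).

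The two arguments rest on the same idea. $WT$ and $W^{3}$ are exactly the invariants of translation by the $P_{a}$, so the paper is performing your Bézout step in the coordinates of the quotient by $\ker(1+\pi)$.

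Your version is more conceptual and never needs the explicit shape of $N_{x}$. In exchange it relies on two inputs:
\begin{itemize}
\item the ``only if'' direction of Proposition~\ref{prop:hessian}(1), which the paper only sketches, although in your situation, with $W,T,W',T'$ all nonzero, it is a two-line computation;
\item $\pi$ acting as a group endomorphism of $E(\overline{K})$ that permutes the points at infinity.
\end{itemize}
The paper's computation uses nothing beyond the definitions of $D_{0}$ and $N_{x}$ and Lemma~\ref{lem:tw-unfold}. That makes it more self-contained and easier to formalise.
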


\begin{corollary}\label{cor:reduction-to-phi}
Suppose that for every affine Fermat point $(p,q)$ of $E$ with $p,q\neq 0$
there exists an affine $\mathcal{Q} = (W,T)$, defined over some extension of $K$ but with
$W^{3}, T^{3} \in K$, such that $W,T \neq 0$, the Frobenius twist relation
$\pi^{n}\mathcal{Q} = \mathcal{Q}\boxplus P_{\alpha}$ holds for some $\alpha^{3}=1$, and
$\Phi_{k}(\mathcal{Q}) = (p,q)$.  Then $\mathrm{RootEqSolvable}(m,K)$ holds.
\end{corollary}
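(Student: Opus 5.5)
The plan is to apply Theorem~\ref{thm:twisted} to the given $\mathcal{Q}$ and read off the root equation after a cube substitution. Fix $p,q\in\Kx$ with $p^{3}+q^{3}=1$, so that $(p,q)$ is an affine Fermat point with both coordinates nonzero. By hypothesis there is an affine $\mathcal{Q}=(W,T)$ over some extension of $K$ with the following properties:
\begin{itemize}
\item $W,T\neq0$ and $W^{3},T^{3}\in K$;
\item $\pi^{n}\mathcal{Q}=\mathcal{Q}\boxplus P_{\alpha}$ for some $\alpha^{3}=1$;
\item $\Phi_{k}(\mathcal{Q})=(p,q)$.
\end{itemize}

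First I check the hypotheses of Theorem~\ref{thm:twisted}. The only one not given outright is that $\Phi_{k}(\mathcal{Q})$ is not $3$-torsion. This follows from Proposition~\ref{prop:hessian}(2): an affine Fermat point is $3$-torsion exactly when one of its coordinates vanishes, and $\Phi_{k}(\mathcal{Q})=(p,q)$ has $p,q\neq0$. The coprimality $\gcd(k,n)=1$ is the standing assumption of Theorem~\ref{thm:rootexistence}, which is where this corollary is used. Theorem~\ref{thm:twisted} then gives
\[
  W^{2^{k}+1}+p\,T^{2^{k}+1}=q .
\]

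Next, put $w=W^{3}$ and $z=T^{3}$; both lie in $K$ by hypothesis. The Fermat equation $W^{3}+T^{3}=1$ gives $w+z=1$. Since $2^{k}+1=3m$, we have $W^{2^{k}+1}=(W^{3})^{m}=w^{m}$ and $T^{2^{k}+1}=z^{m}$. The displayed identity therefore becomes $w^{m}+p\,z^{m}=q$, which is exactly condition \eqref{eq:solvable}. Since $(p,q)$ was arbitrary, $\mathrm{RootEqSolvable}(m,K)$ holds.

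There is no real obstacle here; the argument is bookkeeping once Theorem~\ref{thm:twisted} is available. The one point needing care is that $w$ and $z$ lie in $K$ even though $W$ and $T$ may not, and this is supplied by the hypothesis $W^{3},T^{3}\in K$. The substantive difficulty, namely constructing such a $\mathcal{Q}$ for every $(p,q)$ and verifying the twist relation, is exactly what the corollary assumes, and it is deferred to the odd and even cases that follow.
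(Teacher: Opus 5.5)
Your proposal is correct and follows the paper's proof essentially verbatim. You rule out $3$-torsion of $\Phi_{k}(\mathcal{Q})=(p,q)$ from $p,q\neq 0$ and apply Theorem~\ref{thm:twisted}. You then set $w=W^{3}$, $z=T^{3}$ and use $3m=2^{k}+1$, with the hypothesis $W^{3},T^{3}\in K$ giving $w,z\in K$.
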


\begin{proof}
Given a target $(p,q)$, take $\mathcal{Q}=(W,T)$ as in the hypothesis; $\Phi_{k}(\mathcal{Q}) =
(p,q)$ is not $3$-torsion since $p,q\neq 0$, so Theorem~\ref{thm:twisted}
gives $W^{2^{k}+1}+pT^{2^{k}+1} = q$.  Put $w = W^{3}$, $z = T^{3}$.  Then
$w+z = W^{3}+T^{3} = 1$, and since $3m = 2^{k}+1$,
\[
  w^{m} + p z^{m} = W^{3m}+pT^{3m} = W^{2^{k}+1}+pT^{2^{k}+1} = q .
\]
Both $w$ and $z$ lie in $K$ by hypothesis, so the pair $(w,z)$ witnesses
solvability.
\end{proof}

The proof of Theorem \ref{thm:twisted} occupies the remainder of this subsection.  Throughout it we fix an affine Fermat point $\mathcal{Q} = (W,T)$ with $W,T \neq 0$ and abbreviate
\[
  A = W^{2^{k}}, \qquad B = T^{2^{k}},
\]
so that $\pi^{k}\mathcal{Q} = (A,B)$; raising $W^{3}+T^{3}=1$ to the power $2^{k}$ and
using freshman's dream shows that $A^{3}+B^{3} = 1$, i.e.\ $(A,B)$ is again an
affine Fermat point.  We write
\[
  D_{0} = WT+AB, \qquad
  N_{x} = T^{2}A + B^{2}W, \qquad
  N_{y} = W^{2}B + A^{2}T
\]
for the Hessian data of the ordered pair $\mathcal{Q}$, $\pi^{k}\mathcal{Q}$.  Note that
$A \neq 0$ and $B \neq 0$.  All computations take place in a field of
characteristic two, so signs are irrelevant and $x+x = 0$ throughout.


\begin{lemma}[Weighted sum of the Hessian data]\label{lem:tw-weighted}
For every affine Fermat point $(W,T)$ and every $k$,
\begin{equation}\label{eq:weighted}
  W^{2^{k}+1}\,N_{y} \;+\; N_{x}\,T^{2^{k}+1} \;=\; D_{0}.
\end{equation}
\end{lemma}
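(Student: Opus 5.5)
The identity is a polynomial identity once we use the two Fermat relations $W^{3}+T^{3}=1$ and $A^{3}+B^{3}=1$, where $A=W^{2^{k}}$ and $B=T^{2^{k}}$. The second relation follows from the first by the freshman's dream, as already noted above. So the plan is to rewrite the left-hand side of \eqref{eq:weighted} entirely in terms of $W,T,A,B$, expand, and regroup.

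\emph{Step 1: rewrite the weights.} The key observation is that the weights factor as
\[
W^{2^{k}+1}=W^{2^{k}}\cdot W=AW, \qquad T^{2^{k}+1}=BT.
\]
This is the only place where the exponent $2^{k}+1$ enters. It is also why the lemma holds for every $k$ and needs neither $\gcd(k,n)=1$ nor $W,T\neq 0$.

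\emph{Step 2: expand.} Substituting the definitions of $N_{y}$ and $N_{x}$ gives
\[
AW\,(W^{2}B+A^{2}T) + (T^{2}A+B^{2}W)\,BT
= ABW^{3} + A^{3}WT + ABT^{3} + B^{3}WT .
\]

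\emph{Step 3: group and apply the Fermat relations.} Group the four monomials in pairs:
\[
AB\,(W^{3}+T^{3}) + WT\,(A^{3}+B^{3}) = AB + WT = D_{0}.
\]

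There is no real obstacle; the only point requiring care is the bookkeeping. The pairing must match the cubes of the original coordinates with the factor $AB$, and the cubes of the Frobenius-twisted coordinates with the factor $WT$. Each of the two Fermat relations is used exactly once. Since every step is an identity in a commutative ring of characteristic two, no case distinction is needed.
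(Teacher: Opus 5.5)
Your proposal is correct and follows the paper's proof: write the weights as $AW$ and $BT$, expand, and group the four monomials as $AB(W^{3}+T^{3}) + WT(A^{3}+B^{3})$. The two Fermat relations then reduce this to $AB+WT=D_{0}$.
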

\begin{proof}
Since $W^{2^{k}+1} = AW$ and $T^{2^{k}+1} = BT$, the left-hand side equals
\[
  AW\bigl(W^{2}B+A^{2}T\bigr) + \bigl(T^{2}A+B^{2}W\bigr)BT
   = ABW^{3} + A^{3}WT + ABT^{3} + B^{3}WT .
\]
Grouping the first with the third and the second with the fourth term,
\[
  AB\bigl(W^{3}+T^{3}\bigr) + WT\bigl(A^{3}+B^{3}\bigr) = AB + WT = D_{0},
\]
by the two Fermat equations.
\end{proof}

Identity \eqref{eq:weighted} is the whole point of the construction: after
division by $N_{y}$ it \emph{is} the twisted root equation.  The rest of the
proof consists in showing that the division is legitimate, i.e.\ that
$D_{0}$, $N_{x}$, and $N_{y}$ are all nonzero, and that the resulting pair
$(N_{x}/N_{y},\,D_{0}/N_{y})$ is precisely $\Phi_{k}(\mathcal{Q})$.


\begin{lemma}[Unfolding the twist relation]\label{lem:tw-unfold}
Let $\alpha^{3}=1$.  Then $\pi^{n}\mathcal{Q} = \mathcal{Q}\boxplus P_{\alpha}$ if and only if
\[
  W^{2^{n}} = \alpha W \quad\text{and}\quad T^{2^{n}} = \alpha^{2}T .
\]
In that case $(WT)^{2^{n}}=WT$, $\bigl(W^{3}\bigr)^{2^{n}} = W^{3}$, and
$\bigl(T^{3}\bigr)^{2^{n}} = T^{3}$.
\end{lemma}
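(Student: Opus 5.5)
The plan is to compute both sides in affine coordinates and compare them coordinatewise. First note that $\pi^{n}\mathcal{Q} = (W^{2^{n}}, T^{2^{n}})$, because the Frobenius acts on Fermat coordinates by squaring each coordinate. By Proposition~\ref{prop:hessian}(5), the translate is $\mathcal{Q}\boxplus P_{\alpha} = (\alpha W, \alpha^{2}T)$. Both points are affine: the first because $W,T$ lie in the algebraic closure and are finite, the second because it is given explicitly by this formula.

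Two affine points of $E$ are equal as points of the group exactly when their affine coordinates agree. So the equation $\pi^{n}\mathcal{Q} = \mathcal{Q}\boxplus P_{\alpha}$ is equivalent to the pair of coordinate identities $W^{2^{n}} = \alpha W$ and $T^{2^{n}} = \alpha^{2}T$. This already gives both directions of the ``if and only if''. I expect the only care needed is to make sure the translation formula of Proposition~\ref{prop:hessian}(5) applies over the algebraic closure and not only over $K$. It does: that formula is a direct computation from the projective addition law, which is valid over any field of characteristic two.

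For the consequences, multiply the two identities. This gives $(WT)^{2^{n}} = \alpha^{3}WT = WT$. For the cubes, raise the first identity to the third power: since $x\mapsto x^{2^{n}}$ is multiplicative, $(W^{3})^{2^{n}} = (W^{2^{n}})^{3} = \alpha^{3}W^{3} = W^{3}$. In the same way, $(T^{3})^{2^{n}} = \alpha^{6}T^{3} = T^{3}$, using $\alpha^{3}=1$. No step is a real obstacle; the one point requiring attention is the convention that the point $(w,t)$ with $w,t$ in the algebraic closure denotes $(w:t:1)$, so that equality of affine points is literally equality of coordinates.
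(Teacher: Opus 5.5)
Your proof is correct and follows the paper's argument: you write $\pi^{n}\mathcal{Q}=(W^{2^{n}},T^{2^{n}})$ and $\mathcal{Q}\boxplus P_{\alpha}=(\alpha W,\alpha^{2}T)$ via Proposition~\ref{prop:hessian}(5), compare affine coordinates, and then cube or multiply the two identities using $\alpha^{3}=1$. Your explicit computation $(WT)^{2^{n}}=\alpha^{3}WT=WT$ also fills in the one consequence that the paper states without writing out.
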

\begin{proof}
By Proposition~\ref{prop:hessian}, $\mathcal{Q}\boxplus P_{\alpha} = (\alpha W, \alpha^{2}T)$,
while $\pi^{n}\mathcal{Q} = (W^{2^{n}}, T^{2^{n}})$; two affine Fermat points are equal
exactly when their coordinates agree, whence the stated equivalence.  Cubing
the two coordinate identities and using $\alpha^{3}=1$ gives
$(W^{3})^{2^{n}} = (W^{2^{n}})^{3} = \alpha^{3}W^{3} = W^{3}$ and
$(T^{3})^{2^{n}} = \alpha^{6}T^{3} = T^{3}$.
\end{proof}


\begin{lemma}[The Hessian data never vanish simultaneously]\label{lem:tw-nodegen}
Let $W,T \neq 0$, $\gcd(k,n)=1$, and assume the twist relation
$\pi^{n}\mathcal{Q} = \mathcal{Q}\boxplus P_{\alpha}$ for some $\alpha^{3}=1$.  Then
$D_{0} = N_{x} = N_{y} = 0$ is impossible.
\end{lemma}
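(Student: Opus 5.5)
By Proposition~\ref{prop:hessian}(1), the simultaneous vanishing $D_{0}=N_{x}=N_{y}=0$ for the ordered pair $\mathcal{Q},\pi^{k}\mathcal{Q}$ means exactly that $\mathcal{Q}\boxminus\pi^{k}\mathcal{Q}$ is a point at infinity. Equivalently, $\pi^{k}\mathcal{Q}=\mathcal{Q}\boxplus P_{b}$ for some $b^{3}=1$. I will assume this and derive a contradiction. The plan has three steps: propagate this relation along the Frobenius powers, use $\gcd(k,n)=1$ to reduce to the exponent $1$, and finally show that $\pi\mathcal{Q}=\mathcal{Q}\boxplus P_{\gamma}$ contradicts the Fermat equation.

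First, write $E_{\infty}=\{P_{a}:a^{3}=1\}$. This is a subgroup of $E[3]$ that $\pi$ permutes, since $\pi(P_{a})=P_{a^{2}}$. Consider
\[
  H=\{\,j\ge 0:\ \pi^{j}\mathcal{Q}\in\mathcal{Q}\boxplus E_{\infty}\,\}.
\]
$H$ contains $0$ trivially, $k$ by assumption, and $n$ by the twist hypothesis. It is closed under addition: if $\pi^{i}\mathcal{Q}=\mathcal{Q}\boxplus P_{a}$ and $\pi^{j}\mathcal{Q}=\mathcal{Q}\boxplus P_{b}$, then
\[
  \pi^{i+j}\mathcal{Q}=\pi^{i}\mathcal{Q}\boxplus\pi^{i}P_{b}=\mathcal{Q}\boxplus P_{a}\boxplus P_{b^{2^{i}}},
\]
because $\pi^{i}$ is a group endomorphism. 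It is also closed under subtracting $n$: if $j\ge n$ and $j\in H$, then
\[
  \pi^{j}\mathcal{Q}=\pi^{j-n}(\mathcal{Q}\boxplus P_{\alpha})=\pi^{j-n}\mathcal{Q}\boxplus P_{\alpha^{2^{j-n}}},
\]
so $j-n\in H$. The multiples $jk$ all lie in $H$, and since $\gcd(k,n)=1$ they cover every residue class modulo $n$. Reducing by $n$ then gives $1\in H$, that is, $\pi\mathcal{Q}=\mathcal{Q}\boxplus P_{\gamma}$ for some $\gamma^{3}=1$.

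Second, unfold this relation using Proposition~\ref{prop:hessian}(5), exactly as in Lemma~\ref{lem:tw-unfold} with $n$ replaced by $1$. It gives $W^{2}=\gamma W$ and $T^{2}=\gamma^{2}T$. Since $W,T\neq 0$, this forces $W=\gamma$ and $T=\gamma^{2}$. Then
\[
  W^{3}+T^{3}=1+1=0\neq 1,
\]
which contradicts the Fermat equation and finishes the proof.

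The main obstacle is the bookkeeping in the semigroup argument. One must check that the composed translations really stay inside $E_{\infty}$, which needs the $\pi$-stability of $E_{\infty}$. One must also check that Proposition~\ref{prop:hessian}(1) applies over the algebraic closure, where $\mathcal{Q}$ and $\pi^{k}\mathcal{Q}$ are affine points with nonzero coordinates. The affine Hessian formulas of Proposition~\ref{prop:hessian} are used over $\overline{K}$ as they stand, which is legitimate because they are polynomial identities.
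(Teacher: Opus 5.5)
Your proof is correct, but it follows a different route from the paper's.

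\textbf{The paper's route.} The paper works entirely in coordinates. It reads $D_{0}=0$ as $(WT)^{2^{k}}=WT$. Combining this with $(WT)^{2^{n}}=WT$ from Lemma~\ref{lem:tw-unfold} and $\gcd(k,n)=1$ gives $(WT)^{2}=WT$, hence $WT=1$. Substituting $T=W^{-1}$ into $N_{x}=0$ gives $(W^{3})^{2^{k}-1}=1$. Together with $(W^{3})^{2^{n}-1}=1$ this forces $W^{3}=1$, so $T^{3}=0$.

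\textbf{Your route.} You convert the vanishing into the group relation $\pi^{k}\mathcal{Q}\in\mathcal{Q}\boxplus E_{\infty}$ via Proposition~\ref{prop:hessian}(1). You then run B\'ezout on the exponents of $\pi$ through the semigroup $H$, which is where the $\pi$-stability of $E_{\infty}$ is needed. Finally you rule out $\pi\mathcal{Q}=\mathcal{Q}\boxplus P_{\gamma}$ by a one-line coordinate check.

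\textbf{Comparison.} Both arguments rest on the same fact: the image of $\mathcal{Q}$ in the quotient by $E_{\infty}$ is fixed by $\pi^{k}$ and $\pi^{n}$, hence by $\pi$. The paper sees this through the quotient invariants $WT$ and $W^{3}$, using $\F_{2^{k}}\cap\F_{2^{n}}=\F_{2}$ inside $\overline{K}$; you see it directly on points. Your version is more conceptual and shows exactly why coprimality is the relevant hypothesis. The paper's version is more self-contained: it uses only the explicit equations, not the exceptional-locus characterisation of Proposition~\ref{prop:hessian}(1), whose proof the paper only sketches. In your setting that characterisation is a two-line check anyway. Writing $\pi^{k}\mathcal{Q}=(\lambda W,\mu T)$, the equation $D_{0}=0$ gives $\lambda\mu=1$ and $N_{x}=0$ gives $\lambda=\mu^{2}$. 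Hence $\mu^{3}=1$ and $\pi^{k}\mathcal{Q}=\mathcal{Q}\boxplus P_{\mu^{2}}$, so your proof need not rely on the sketched part at all.
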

\begin{proof}
  Assume for contradiction that 
  \begin{align*}
    D_{0} &= WT + AB = WT + W^{2^k}T^{2^k} = 0, \\
    N_{x} &= T^{2}A + B^{2}W = T^2W^{2^k} + T^{2^{k+1}}W = 0, \\
    N_{y} &= W^{2}B + A^{2}T = W^2T^{2^k} + W^{2^k}T^2 = 0.
  \end{align*}
  Applying Lemma \ref{lem:tw-unfold} and $\gcd(n,k)=1$, we obtain $WT=(WT)^2$. Since $WT \neq 0$, this implies $WT = 1$. But then
  \[ 
    N_{x} = W^{2^k-2} + W^{-2^{k+1}+1} = 0,
  \]
  equivalently, $W^{3(2^k-1)}=1$. Combining this with $(W^3)^{2^n-1}=1$, we get $W^3=1$. Then $T^3=1+W^3=0$, a contradiction. 
\end{proof}


\begin{lemma}[Non-vanishing]\label{lem:tw-nonvanishing}
Under the hypotheses of Theorem~\ref{thm:twisted} we have
\[
  D_{0} \neq 0, \qquad N_{x} \neq 0, \qquad N_{y} \neq 0 .
\]
\end{lemma}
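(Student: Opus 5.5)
The plan is to read off the three quantities as the homogeneous coordinates of the point $\mathcal{Q}\boxplus\pi^{k}\mathcal{Q}$, and then to show that the vanishing of any one of them would make $\Phi_{k}(\mathcal{Q})$ a $3$-torsion point, contradicting the hypothesis of Theorem~\ref{thm:twisted}.

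\emph{Step 1: the Hessian triple is the sum.} The projective Hessian formula applied to $P_{1}=(W:T:1)$ and $P_{2}=(A:B:1)=\pi^{k}\mathcal{Q}$ produces the triple $(X_{3}:Y_{3}:Z_{3})=(N_{x}:N_{y}:D_{0})$. By Lemma~\ref{lem:tw-nodegen} this triple is not $(0,0,0)$, using exactly the hypotheses $W,T\neq 0$, $\gcd(k,n)=1$ and the twist relation. The discussion preceding Proposition~\ref{prop:hessian} then gives
\[
  \mathcal{Q}\boxplus\pi^{k}\mathcal{Q}=(N_{x}:N_{y}:D_{0}),
\]
which is valid in projective coordinates even before we know whether $D_{0}$ vanishes. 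Using the inversion formula $\boxminus(X:Y:Z)=(Y:X:Z)$, we obtain
\[
  \boxminus(\mathcal{Q}\boxplus\pi^{k}\mathcal{Q})=(N_{y}:N_{x}:D_{0}).
\]

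\emph{Step 2: a zero coordinate forces $3$-torsion.} By the description of $E[3]$ as the nine inflection points $(1:a:0)$, $(0:1:a)$, $(a:0:1)$ with $a^{3}=1$, every point of $E$ having a zero homogeneous coordinate is $3$-torsion. This is the projective form of Proposition~\ref{prop:hessian}(2) together with the fact that the points at infinity lie in $E[3]$. Suppose one of $N_{x}$, $N_{y}$, $D_{0}$ vanishes. Then $R:=\boxminus(\mathcal{Q}\boxplus\pi^{k}\mathcal{Q})$ is a point on $E$ with a zero coordinate, so $R\in E[3]$.

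\emph{Step 3: contradiction.} The point $T_{3}=(1,0)$ is $3$-torsion, and $E[3]$ is a subgroup. Hence $\Phi_{k}(\mathcal{Q})=R\boxplus T_{3}\in E[3]$, contrary to the hypothesis that $\Phi_{k}(\mathcal{Q})$ is not $3$-torsion. Therefore $D_{0}$, $N_{x}$ and $N_{y}$ are all nonzero.

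The only delicate point is in Step~1. We must invoke the projective version of the addition law rather than the affine Proposition~\ref{prop:hessian}(3), because that affine statement presupposes $D_{0}\neq 0$. Lemma~\ref{lem:tw-nodegen} is precisely what makes the projective formula applicable, since it rules out the exceptional locus. Once this is in place, Steps~2 and~3 are immediate from the description of $E[3]$.
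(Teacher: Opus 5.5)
Your proposal is correct and follows essentially the same route as the paper. Lemma~\ref{lem:tw-nodegen} makes $(N_{x}:N_{y}:D_{0})$ a genuine projective point equal to $\mathcal{Q}\boxplus\pi^{k}\mathcal{Q}$; a vanishing coordinate puts this point, and hence its negative, into $E[3]$; adding the $3$-torsion point $T_{3}$ then makes $\Phi_{k}(\mathcal{Q})$ $3$-torsion, which is the contradiction. Your insistence on the projective rather than the affine addition law is slightly more careful than the paper's wording, which calls $\mathcal{Q}\boxplus\pi^{k}\mathcal{Q}$ an affine point before $D_{0}\neq 0$ is known.
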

\begin{proof}
By Lemma \ref{lem:tw-nodegen}, $D_{0} = N_{x} = N_{y} = 0$ is impossible. By Proposition~\ref{prop:hessian}, these are the homogeneous coordinates of the affine Fermat point $\mathcal{Q}\boxplus \pi^{k}\mathcal{Q}$. If one of them were zero, then $\mathcal{Q}\boxplus \pi^{k}\mathcal{Q}$ would be $3$-torsion, and then
\[
  [3](\Phi_{k}(\mathcal{Q})) = [3](\boxminus(\mathcal{Q}\boxplus\pi^{k}\mathcal{Q})\boxplus T_3) = [-3](\mathcal{Q}\boxplus\pi^{k}\mathcal{Q}) \boxplus [3]T_3 = O,
\]
contradicting the assumption that $\Phi_{k}(\mathcal{Q})$ is not $3$-torsion.
\end{proof}


\begin{proof}[Proof of Theorem~\ref{thm:twisted}]
The projective coordinates of the sum $\mathcal{Q}\boxplus \pi^{k}\mathcal{Q}$ are given by $(N_{x}:N_{y}:D_{0})$, and all three coordinates are nonzero by Lemma~\ref{lem:tw-nonvanishing}. We can use the Hessian addition laws to compute $\Phi_{k}(\mathcal{Q}) = \boxminus(\mathcal{Q}\boxplus \pi^{k}\mathcal{Q})\boxplus T_{3}$ in these coordinates.
Indeed, $\boxminus(N_{x}:N_{y}:D_{0}) = (N_{y}:N_{x}:D_{0})$, and adding
$T_{3} = (1:0:1)$ by the projective formulas gives
\[
  \Phi_{k}(\mathcal{Q}) = (N_{x}^{2} : N_{x}D_{0} : N_{x}N_{y}) = (N_{x}:D_{0}:N_{y}),
\]
using $N_{x} \neq 0$.
This means that $\Phi_k(\mathcal{Q})$ is an affine point with coordinates $(p,q)$, where 
\[
  p = \frac{N_{x}}{N_{y}} \neq 0, \qquad q = \frac{D_{0}}{N_{y}} \neq 0.
\]
Divide the weighted identity \eqref{eq:weighted} of Lemma~\ref{lem:tw-weighted} by $N_{y} \neq 0$:
\[
  W^{2^{k}+1} + \frac{N_{x}}{N_{y}}\,T^{2^{k}+1} = \frac{D_{0}}{N_{y}},
  \qquad\text{i.e.}\qquad
  W^{2^{k}+1} + p\,T^{2^{k}+1} = q . \qedhere
\]
\end{proof}

\begin{remark}
It is worth recording where each hypothesis is spent. The assumption that
$\Phi_{k}(\mathcal{Q})$ is not $3$-torsion is what rules out the two degenerate
configurations, $D_{0}=0$ (the two points sum to a point at infinity) and
$N_{x}N_{y}=0$ (the sum is an affine $3$-torsion point).  The twist relation
$\pi^{n}\mathcal{Q} = \mathcal{Q}\boxplus P_{\alpha}$ together with $\gcd(k,n)=1$ and $W,T \neq 0$ is
used exactly once, in Lemma~\ref{lem:tw-nodegen}, to exclude the totally
degenerate case in which all three Hessian quantities vanish.  In that case
$W^{3}$ and $T^{3}$ are fixed by $\pi^{k}$, and the twist relation makes them
fixed by $\pi^{n}$ as well; it is only the coprimality of $k$ and $n$ that then
pushes them into the prime field and produces the contradiction.
\end{remark}

\subsection{Odd dimension: the rational kernel is trivial}

\begin{proposition}\label{prop:oddkernel}
Let $|K| = 2^{n}$ with $n$ odd and $\gcd(k,n)=1$.  Then a point of $E(K)$ lies
in $\ker(1+\pi^{k})$ if and only if it is the origin.  Consequently
$1+\pi^{k}$ is a bijection of the finite group $E(K)$, and hence so is the map
$\Phi_k(P)=\boxminus((1+\pi^k)P)\boxplus T_3$.
\end{proposition}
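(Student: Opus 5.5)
The plan is to reduce the kernel question to the kernel of $1+\pi$ via the Frobenius relation $\pi^{n}=\mathrm{id}$ on $E(K)$ (Proposition~\ref{prop:frobfacts}(3)). Let $P\in E(K)$ with $P\boxplus\pi^{k}P=O$, i.e.\ $\pi^{k}P=\boxminus P$. Iterating gives $\pi^{jk}P=[(-1)^{j}]P$ for every $j\ge 0$. Since $\gcd(k,n)=1$ there is $j$ with $jk\equiv 1\pmod n$, and because $n$ is odd, $k$ and $n-k$ have different parities. We may therefore take $j$ odd: if a solution $j$ is even, replace it by $j+n$, which is odd and still satisfies $(j+n)k\equiv 1 \pmod n$ since $nk\equiv 0$. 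Then, using $\pi^{n}=\mathrm{id}$ on $E(K)$,
\[
  \pi P=\pi^{jk}P=\boxminus P.
\]
So $P\in\ker(1+\pi)$, and by Proposition~\ref{prop:frobfacts}(2), $P=P_{a}$ for some $a$ with $a^{3}=1$.

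It remains to show that the only point at infinity rational over $K$ is $O$. Since $n$ is odd, $\muthree(K)=\{1\}$ (Lemma~\ref{lem:mu3basic}), so the only rational point at infinity is $P_{1}=(1:1:0)=O$. Alternatively, $P_{a}\in E(K)$ means $a^{2^{n}}=a$; with $n$ odd this gives $a^{2}=a$, hence $a=1$. Conversely $O$ lies in the kernel trivially, which proves the ``if and only if''.

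For the consequences: $1+\pi^{k}$ is a group endomorphism of the finite abelian group $E(K)$, since $\pi$ is a homomorphism mapping $E(K)$ to itself. An injective endomorphism of a finite group is bijective. Finally, $\Phi_{k}$ is the composition of $1+\pi^{k}$ with the inversion $\boxminus$ and the translation by $T_{3}$, both bijections of $E(K)$, so $\Phi_{k}$ is a bijection as well.

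The only delicate step is the parity argument for choosing $j$ odd, since an even $j$ would only yield $\pi P=P$ rather than $\pi P=\boxminus P$. As shown above, it is settled by the freedom to shift $j$ by $n$, which uses exactly that $n$ is odd. Everything else is direct citation of Proposition~\ref{prop:frobfacts}.
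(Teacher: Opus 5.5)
Your proof is correct, but it takes a different route from the paper's.

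\textbf{The paper's route.} The paper computes directly with the kernel equations of $1+\pi^{k}$ in coordinates. For an affine point it gets $x^{2^{k}}=y$ and $y^{2^{k}}=x$, hence $x^{2^{2k}}=x$. It then combines this with $x^{2^{n}}=x$ and $\gcd(2k,n)=1$ to force $x\in\F_{2}$ and $y=x$, which contradicts $x^{3}+y^{3}=1$. Points at infinity are handled separately, via $\omega^{2^{k}}=\omega^{2}$ and the fact that $\muthree$ is trivial in $K$ for odd $n$.

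\textbf{Your route.} You never write coordinates for $1+\pi^{k}$. You use only that $\pi$ acts on $E(K)$ as an automorphism whose order divides $n$ (Proposition~\ref{prop:frobfacts}(3)). From $\pi^{k}P=\boxminus P$ and an odd inverse $j$ of $k$ modulo $n$, you obtain $\pi P=\boxminus P$. You then reuse the already-established description of $\ker(1+\pi)$ from Proposition~\ref{prop:frobfacts}(2), and finish with the triviality of the cube roots of unity in $K$.

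\textbf{Comparison.} Oddness of $n$ does the same job in both arguments. In the paper it gives $\gcd(2k,n)=1$; in yours it lets you shift $j$ by $n$ to make it odd. Your reduction is more structural: it shows $\ker(1+\pi^{k})\subseteq\ker(1+\pi)$ for any abelian group carrying an automorphism of order dividing an odd $n$. It also avoids repeating the coordinate computation. The cost is that it depends on Proposition~\ref{prop:frobfacts}(2),(3), whereas the paper's argument is self-contained.

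One small point: your remark that $k$ and $n-k$ have different parities is never used. The shift $j\mapsto j+n$ is what actually does the work.
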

\begin{proof}
As in Proposition~\ref{prop:frobfacts}(2), the kernel condition on a
homogeneous triple $(X:Y:Z)$ reads: there is $c\neq 0$ with
$X^{2^{k}} = cY$, $Y^{2^{k}} = cX$, $Z^{2^{k}} = cZ$.  If $Z \neq 0$ then
$c = Z^{2^{k}-1}$ and one computes that the affine coordinate $x = X/Z$
satisfies $x^{2^{2k}} = x$; since also $x^{2^{n}} = x$ and
$\gcd(2k,n) = \gcd(2,n) = 1$ for odd $n$, Bézout gives $x^{2} = x$, so
$x \in \{0,1\}$.  The kernel equations also give $y=x^{2^{k}}=x$, so
$x^{3}+y^{3}=0$, contradicting the affine Fermat equation $x^{3}+y^{3}=1$.
$Z = 0$ and the point is at infinity, $(1:\omega:0)$ with $\omega^{3}=1$;
 the kernel condition becomes $\omega^{2^{k}} = \omega^{2}$.  For odd $n$ the
 only cube root of unity
in $K$ is $\omega = 1$ (as $\gcd(3,2^{n}-1)=1$), giving the origin.  Injectivity of $1+\pi^{k}$ on the finite abelian group
$E(K)$ follows, hence bijectivity, and $\Phi_{k}$ is the composition of
$1+\pi^{k}$ with a negation and a translation.
\end{proof}

\begin{proof}[Proof of Theorem~\ref{thm:rootexistence}, odd $n$]
Let $(p,q)$ be a target, $p,q \neq 0$, $p^{3}+q^{3}=1$; it is an affine Fermat
point $P$ of $E(K)$, not $3$-torsion by
Proposition~\ref{prop:hessian}.  By Proposition~\ref{prop:oddkernel} there is a
unique $\mathcal{Q} \in E(K)$ with $\Phi_{k}(\mathcal{Q}) = P$.  $\mathcal{Q}$ is not $3$-torsion:
$\Phi_{k}$ maps $E[3]$ into $E[3]$, whereas $P$ is not $3$-torsion.  In
particular $\mathcal{Q}$ is not a point at infinity, so it is an affine Fermat
point $(W,T)$, and $W,T \neq 0$ by the torsion criterion of
Proposition~\ref{prop:hessian}.  Since $\mathcal{Q}$ is $K$-rational,
$\pi^{n}\mathcal{Q} = \mathcal{Q} = \mathcal{Q}\boxplus P_{1}$, so the Frobenius twist hypothesis holds with
$\alpha = 1$, and $W^{3},T^{3}\in K$ trivially.  Corollary
\ref{cor:reduction-to-phi} concludes.
\end{proof}

\subsection{Even dimension: invert the prime-to-three factor, then lift}

For even $n$ the rational kernel of $1+\pi^{k}$ is exactly the set of the three
points at infinity, so $\Phi_{k}$ is not
injective on $E(K)$ and the odd argument breaks down.  Instead we factor the
isogeny.

\begin{lemma}[{Factorisation in $\Z[\pi]$}]\label{lem:factor}
Let $k = 2r+1$ be odd with $2^{k}+1 = 3m$.  There are integers $a,b$ with
\[
  a+2b = 1, \qquad a-b = (-2)^{r}, \qquad a^{2}+2b^{2} = m ;
\]
explicitly $b = (1-(-2)^{r})/3$ and $a = (1+2(-2)^{r})/3$.  Set
$G = a - b\pi$ and $H = a+b\pi$, acting on any abelian group equipped with an
endomorphism $\pi$ satisfying $\pi^{2} = [-2]$.  Then
\[
  (1+\pi)\circ G = 1 + \pi^{k},
  \qquad
  H \circ G = G \circ H = [m] .
\]
If moreover some integer $M$ annihilates the group and $\gcd(m,M)=1$, then $G$
is bijective.
\end{lemma}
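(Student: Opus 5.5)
The plan is to verify the integer identities first, then the operator identities in the commutative ring generated by $\pi$ with $\pi^{2}=[-2]$, and finally deduce bijectivity from $H\circ G=[m]$.

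\emph{Integer identities.} Since $k=2r+1$ is odd, $(-2)^{r}\equiv 1\pmod 3$. Hence $b=(1-(-2)^{r})/3$ and $a=(1+2(-2)^{r})/3$ are integers. Direct substitution gives $a+2b=(1+2(-2)^{r}+2-2(-2)^{r})/3=1$ and $a-b=(3(-2)^{r})/3=(-2)^{r}$. For the norm, put $s=(-2)^{r}$:
\[
  9(a^{2}+2b^{2})=(1+2s)^{2}+2(1-s)^{2}=3+6s^{2}.
\]
Thus $a^{2}+2b^{2}=(1+2s^{2})/3=(1+2\cdot 4^{r})/3=(2^{2r+1}+1)/3=m$.

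\emph{Operator identities.} All maps are integer polynomials in $\pi$, so they commute, and we may reduce using $\pi^{2}=[-2]$.
\begin{itemize}
\item First, $H\circ G=(a+b\pi)(a-b\pi)=a^{2}-b^{2}\pi^{2}=a^{2}+2b^{2}=m$, and likewise $G\circ H=[m]$.
\item Next, $(1+\pi)(a-b\pi)=a+(a-b)\pi-b\pi^{2}=(a+2b)+(a-b)\pi=1+(-2)^{r}\pi$.
\item Finally, $\pi^{k}=\pi^{2r}\pi=(\pi^{2})^{r}\pi=[(-2)^{r}]\pi$.
\end{itemize}
Together these give $(1+\pi)\circ G=1+\pi^{k}$. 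The only care needed is that $[-2]^{r}=[(-2)^{r}]$ as endomorphisms and that $\pi$ commutes with integer multiplications. The latter holds because $\pi$ is a group endomorphism, which is implicit in "abelian group equipped with an endomorphism $\pi$".

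\emph{Bijectivity.} Suppose $M$ annihilates the group and $\gcd(m,M)=1$. Choose $m'$ with $mm'\equiv 1\pmod M$. Then $[m]$ is bijective with inverse $[m']$, since $[mm']=[1]$ on a group killed by $M$. From $H\circ G=G\circ H=[m]$, the map $[m']\circ H$ is a two-sided inverse of $G$, using that $[m']$ commutes with $H$. So $G$ is bijective.

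There is no real obstacle here. The only delicate point is the sign bookkeeping for $\pi^{k}=[(-2)^{r}]\pi$, which is consistent with characteristic-two negation because everything is expressed as integer multiples in the group.
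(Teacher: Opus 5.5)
Your proof is correct and follows the paper's argument essentially step for step. Integrality comes from $(-2)^{r}\equiv 1 \pmod 3$, the two linear identities follow by substitution, and the norm identity gives $a^{2}+2b^{2}=m$ (the paper writes it as $((a+2b)^{2}+2(a-b)^{2})/3$, while you substitute directly). The factorisation uses $\pi^{2r}=[(-2)^{r}]$, and bijectivity uses a modular inverse of $m$, giving $G^{-1}=[m']H$. One cosmetic point: $(-2)^{r}\equiv 1 \pmod 3$ holds simply because $-2\equiv 1 \pmod 3$, not because $k$ is odd.
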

\begin{proof}
 The integrality of $a,b$ follows from $(-2)^{r}\equiv 1 \pmod 3$; the three displayed
identities are then direct computations, using $a^{2}+2b^{2} =
\bigl((a+2b)^{2}+2(a-b)^{2}\bigr)/3 = (1+2\cdot 4^{r})/3 = (2^{k}+1)/3 = m$.
For the factorisation, $\pi^{2}=[-2]$ gives $\pi^{2r} = [(-2)^{r}]$, so
$1+\pi^{k} = 1+(-2)^{r}\pi = (1+\pi)(a-b\pi)$ once $a+2b=1$ and
$a-b = (-2)^{r}$.  The norm identity gives $HG = [a^{2}+2b^{2}] = [m]$.
Finally, if $M$ annihilates the group and $\gcd(m,M)=1$, Bézout provides
integers $s,t$ with $sm+tM = 1$, so $[m]$ is invertible (with inverse $[s]$)
and $G$ is bijective with inverse $[s]H$.
\end{proof}

\begin{lemma}[An annihilator in even dimension]\label{lem:annihilator}
Let $|K| = 2^{n}$ with $n$ even, and put $c_{n} = (-2)^{n/2}$ and
$\mathcal N = c_{n}-1$.  Then $c_{n}^{2} = 2^{n}$, $\mathcal N \mid 2^{n}-1$,
and $\mathcal N$ annihilates $E(K)$.  If $\gcd(m,2^{n}-1)=1$ then
$\gcd(\mathcal N, m) = 1$.
\end{lemma}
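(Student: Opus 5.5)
The plan has four parts: two pieces of integer arithmetic, one annihilation statement obtained from the Frobenius characteristic polynomial on $E(K)$, and a final gcd computation.

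\emph{Arithmetic.} Since $n$ is even, $c_{n}^{2} = (-2)^{n} = 2^{n}$ is immediate. This gives $2^{n}-1 = c_{n}^{2}-1 = (c_{n}-1)(c_{n}+1)$, so $\mathcal N = c_{n}-1$ divides $2^{n}-1$ as integers; this holds even though $c_{n}$ may be negative, in which case $\mathcal N$ is negative.

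\emph{Annihilation.} I would use Proposition~\ref{prop:frobfacts}. Part (3) says $\pi^{n} = \mathrm{id}$ on $E(K)$. Part (1) gives $\pi^{2} = [-2]$, hence $\pi^{n} = (\pi^{2})^{n/2} = [(-2)^{n/2}] = [c_{n}]$ on $E(K)$. For $P \in E(K)$ this yields $[c_{n}]P = P$, i.e.\ $[c_{n}-1]P = O$, so $\mathcal N$ annihilates $E(K)$. The one care point is to note that $[c_{n}]$ is meaningful for a negative integer, as $[j]$ composed with $\boxminus$, and that the relation $\pi^{2}=[-2]$ holds on all of $E(K)$, including the points at infinity. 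Both are covered by Proposition~\ref{prop:frobfacts}(1), so no real obstacle arises.

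\emph{Coprimality.} Any common divisor of $\mathcal N$ and $m$ divides $2^{n}-1$ by the first part, hence divides $\gcd(m,2^{n}-1) = 1$. Taking absolute values handles the possible sign of $\mathcal N$.

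\emph{Main obstacle.} I expect no real difficulty. The only point to check is that the identity $\pi^{n}=[c_{n}]$ is justified pointwise on $E(K)$ from the two structural facts, rather than assuming $\Z[\pi]$-module structure beyond what Proposition~\ref{prop:frobfacts} provides. This lemma then feeds Lemma~\ref{lem:factor} with $M = \mathcal N$ (or $|\mathcal N|$) to make $G$ bijective on $E(K)$.
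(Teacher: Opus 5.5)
Your proposal is correct and follows the paper's proof essentially line by line: divisibility comes from $2^{n}-1=(c_{n}-1)(c_{n}+1)$; annihilation comes from comparing $\pi^{n}=\mathrm{id}$ with $\pi^{n}=(\pi^{2})^{n/2}=[c_{n}]$ via Proposition~\ref{prop:frobfacts}(1) and (3); and coprimality is inherited from $\mathcal N\mid 2^{n}-1$. Your remarks about the possible negative sign of $\mathcal N$ and about negative multiples $[j]$ are harmless refinements that the paper leaves implicit.
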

\begin{proof}
$c_{n}^{2} = (-2)^{n} = 2^{n}$ since $n$ is even, and
$2^{n}-1 = (c_{n}-1)(c_{n}+1)$, giving the divisibility.  On $E(K)$,
$\pi^{n} = \mathrm{id}$ by Proposition~\ref{prop:frobfacts}(3), while
$\pi^{n} = (\pi^{2})^{n/2} = [(-2)^{n/2}] = [c_{n}]$ by the CM relation; hence
$[\mathcal N] = [c_{n}]-\mathrm{id} = 0$ on $E(K)$.  Coprimality is inherited
from $\mathcal N \mid 2^{n}-1$.
\end{proof}

\begin{corollary}\label{cor:Gbij}
Let $n$ be even, $k$ odd, $|K| = 2^{n}$, $2^{k}+1 = 3m$, and
$\gcd(m,2^{n}-1)=1$.  Then $G = a-b\pi$ is a bijection of $E(K)$; in
particular every $R \in E(K)$ has a $K$-rational preimage $Y$ with
$G(Y) = R$.
\end{corollary}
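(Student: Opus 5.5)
The plan is to combine Lemma~\ref{lem:factor} with Lemma~\ref{lem:annihilator}, using the abelian group $E(K)$ and the Frobenius endomorphism $\pi$. We must check the hypotheses of the last clause of Lemma~\ref{lem:factor}: the factorisation identities, and the existence of an integer annihilator coprime to $m$.

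First, $\pi$ restricts to a group endomorphism of $E(K)$. Indeed, it is a group endomorphism of $E$ defined over $\F_{2}$, and it maps $K$-rational points to $K$-rational points, since the coordinates of a $K$-rational point are squared within $K$. By Proposition~\ref{prop:frobfacts}(1), $\pi^{2} = [-2]$ holds on $E(K)$. Since $k$ is odd, write $k = 2r+1$. Then Lemma~\ref{lem:factor} gives integers $a,b$ such that $G = a - b\pi$ and $H = a+b\pi$ satisfy $H\circ G = G\circ H = [m]$ as endomorphisms of $E(K)$.

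Next, take $M = \mathcal N = (-2)^{n/2}-1$ from Lemma~\ref{lem:annihilator}. Since $n$ is even, this lemma says $\mathcal N$ annihilates $E(K)$. Since $\gcd(m,2^{n}-1)=1$, it also gives $\gcd(\mathcal N,m)=1$. One subtlety is that $\mathcal N$ may be negative, for example when $n/2$ is odd, or equal to $\pm1$ in small cases. Bézout with a negative annihilator still works, and we may replace $\mathcal N$ by $|\mathcal N|$ in any case.

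Now choose $s,t$ with $sm + t\mathcal N = 1$. On $E(K)$ this gives $[s][m] = \mathrm{id}$, so $[s]H$ is a two-sided inverse of $G$, using that $[s]$ commutes with $H$ and $G$. Hence $G$ is a bijection of $E(K)$. In particular, for every $R \in E(K)$ the point $Y = [s]H(R)$ is $K$-rational and satisfies $G(Y) = R$.

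There is essentially no obstacle. The only points to watch are that the endomorphism identities are applied on the group $E(K)$ rather than on all of $E$, and that the sign of $\mathcal N$ does not matter for Bézout.
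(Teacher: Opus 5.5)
Your proposal is correct and follows the paper's argument: the paper's proof simply combines Lemma~\ref{lem:factor} (the norm identity $H\circ G=G\circ H=[m]$ and its Bézout clause) with Lemma~\ref{lem:annihilator} ($\mathcal N$ annihilates $E(K)$ and is coprime to $m$), which is exactly what you spell out. One harmless inaccuracy: $\mathcal N$ can never equal $\pm 1$, since $|\mathcal N|\ge 3$ for every even $n\ge 2$.
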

\begin{proof}
Combine Lemmas~\ref{lem:factor} and~\ref{lem:annihilator}.
\end{proof}

\begin{lemma}[Explicit quotient coordinates]\label{lem:quotient}
Let $L$ be a field of characteristic two and let $(x,y)$ be an affine point of
$x^{3}+y^{3}=1$ with $x,y\neq 0$.  Let $W \in L$ satisfy
$W^{3} = \dfrac{x+1}{x+y}$ and put
\[
  T = \frac{1}{W}\cdot\frac{1+x+y}{x+y}.
\]
Then
\[
  T^{3} = \frac{y+1}{x+y},
  \qquad W^{3}+T^{3} = 1,
\]
and the Hessian data of the pair $(W,T)$, $(W^{2},T^{2})$ satisfy $D_{0}\neq 0$
and
\[
  \frac{N_{x}}{D_{0}} = x, \qquad \frac{N_{y}}{D_{0}} = y ,
\]
i.e.\ $(W,T)\boxplus \pi(W,T) = (x,y)$.
\end{lemma}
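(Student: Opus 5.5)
The lemma is a direct algebraic verification, and I will organise it around the symmetric quantities $s = x+y$ and $xy$. The only relation available is the Fermat equation $x^{3}+y^{3}=1$, equivalently $s^{3}+sxy\cdot s^{0}\cdot 1 = \ldots$; in characteristic two, $x^{3}+y^{3} = s^{3}+xys$, so $s^{3}+xys = 1$. Two preliminary facts are needed. First, $s\neq 0$: otherwise $x=y$ and $x^{3}+y^{3}=0\neq 1$. Second, $W\neq 0$: if $W=0$ then $x=1$, so $y^{3}=0$, contradicting $y\neq 0$. Hence $T$ is well defined.

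For the cube of $T$, compute
\[
T^{3} = \frac{(1+s)^{3}}{s^{3}}\cdot\frac{s}{x+1} = \frac{(1+s)^{3}}{s^{2}(x+1)},
\]
and show this equals $(y+1)/s$. That reduces to the identity $(1+s)^{3} = s(x+1)(y+1) = s(1+s+xy)$. Expanding the left side gives $1+s+s^{2}+s^{3}$, and the right side is $s+s^{2}+sxy$. The identity is therefore exactly $1+s^{3} = sxy$, which is the Fermat relation. The sum $W^{3}+T^{3} = (x+1+y+1)/s = 1$ is then immediate.

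For the Hessian data, use the affine formulas of Proposition~\ref{prop:hessian} with $(w_{1},t_{1})=(W,T)$ and $(w_{2},t_{2})=(W^{2},T^{2})$. These give
\[
D_{0} = WT+W^{2}T^{2} = WT(1+WT), \qquad N_{x} = T^{2}W^{2}+T^{4}W = WT^{2}(W+T^{3}), \qquad N_{y} = W^{2}T(W^{3}+T)\cdot\tfrac{1}{W}\ldots
\]
Written more carefully, $N_{y} = W^{2}T^{2}+W^{4}T = W^{2}T(T+W^{2})$ and $N_{x} = T^{2}W(W+T^{2})$. Set $P = WT = (1+s)/s$, which is known explicitly, and note that $P\neq 0$ because $s\neq 1$ (if $s=1$ then $xy=0$). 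Then $1+P = 1/s$, so $D_{0} = P/s \neq 0$.

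This leaves
\[
\frac{N_{x}}{D_{0}} = \frac{T(W+T^{2})}{1+P} = s\,(WT + T^{3}) = s\Bigl(P + \frac{y+1}{s}\Bigr) = 1+s+y+1 = x,
\]
and symmetrically $N_{y}/D_{0} = s(P+W^{3}) = 1+s+x+1 = y$. So $(W,T)\boxplus\pi(W,T) = (x,y)$ by Proposition~\ref{prop:hessian}(3). The main obstacle is only the $T^{3}$ identity, which needs the Fermat relation in the form $1+s^{3}=sxy$; the remaining steps become one-line substitutions once $WT=(1+s)/s$ is used.
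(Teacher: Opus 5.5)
Your proof is correct and follows the paper's argument essentially step for step. You reduce the $T^{3}$ formula to $(1+x+y)^{3}=(x+1)(y+1)(x+y)$, i.e.\ to $1+s^{3}=sxy$, and you collapse $N_{x}/D_{0}$ and $N_{y}/D_{0}$ using $WT=(1+s)/s$ and $1+WT=1/s$; the paper names $WT$ as $s$ where you call it $P$. Before writing it up, delete the garbled first attempts: the stray $s^{0}$ expression, and the first display, where $N_{x}=WT^{2}(W+T^{3})$ should read $WT^{2}(W+T^{2})$ and the $N_{y}$ fragment should go. Your explicit checks that $W\neq 0$ and $s\neq 1$ are a useful addition, since the paper leaves them implicit.
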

\begin{proof}
All statements are rational identities in $x,y$ once $W^{3}$ is substituted,
using $x+y\neq 0$, $1+x+y\neq 0$ (both consequences of the cubic equation and
$x,y \neq 0$), and the key identity $1+(x+y)^{3} = (x+y)xy$.  For $T^{3}$:
\[
  T^{3} = \frac{(1+x+y)^{3}}{(x+y)^{3}}\cdot\frac{x+y}{x+1}
        = \frac{(1+x+y)^{3}}{(x+y)^{2}(x+1)} = \frac{y+1}{x+y},
\]
the last step being the polynomial identity
$(1+x+y)^{3} = (y+1)(x+1)(x+y)$ modulo $x^{3}+y^{3}+1$ in characteristic two.
Then $W^{3}+T^{3} = \frac{(x+1)+(y+1)}{x+y} = 1$.  For the Hessian data put
$s = WT = (1+x+y)/(x+y)$, so $1+s = 1/(x+y)$.  Then
$D_{0} = WT + W^{2}T^{2} = s(1+s) \neq 0$,
$N_{x} = T^{2}W^{2} + T^{4}W = WT^{2}(W+T^{2})$, and
$N_{y} = W^{2}T^{2} + W^{4}T = W^{2}T(T+W^{2})$, whence
\[
  \frac{N_{x}}{D_{0}} = \frac{s + T^{3}}{1+s} = \frac{x/(x+y)}{1/(x+y)} = x,
  \qquad
  \frac{N_{y}}{D_{0}} = \frac{s + W^{3}}{1+s} = \frac{y/(x+y)}{1/(x+y)} = y .
\]
\end{proof}

The quantities $W^{3}$, $T^{3}$, and $WT$ in Lemma~\ref{lem:quotient} are
rational functions of $(x,y)$ alone: they do \emph{not} depend on the choice of
the cube root $W$.  This is exactly the statement that they are the invariants
of the degree-three Hessian quotient, whose kernel is the group of the three
points at infinity; cf.~\cite{TwistedHessian,HessianIsogenies}.  It is what
makes the descent in the next proof free of charge.

\begin{proof}[Proof of Theorem~\ref{thm:rootexistence}, even $n$]
Let $(p,q)$ be a target, and let $P = (p,q) \in E(K)$ be the corresponding
affine Fermat point; $P$ is not $3$-torsion.

\emph{(1) Invert $G$ over $K$.}  Put $R = T_{3}\boxminus P \in E(K)$.  By
Corollary~\ref{cor:Gbij} there is $Y \in E(K)$ with $G(Y) = R$.

\emph{(2) $Y$ is affine with nonzero coordinates.}  $R$ is not $3$-torsion,
since $T_{3}$ is and $P$ is not; and $G$ maps $3$-torsion to $3$-torsion, so
$Y$ is not $3$-torsion either.  Hence $Y = (x,y)$ with $x,y \neq 0$
by Proposition~\ref{prop:hessian}.

\emph{(3) Lift through the degree-three quotient.}  Let $\overline{K}$ be an algebraic
closure of $K$ and regard $(x,y)$ in $E(\overline{K})$ via the base-change homomorphism
$E(K)\to E(\overline{K})$.  Choose $W \in \overline{K}$ with
$W^{3} = (x+1)/(x+y)$ and let $T$ be as in Lemma~\ref{lem:quotient}; then
$\mathcal{Q} = (W,T) \in E(\overline{K})$ satisfies $\mathcal{Q}\boxplus \pi \mathcal{Q} = (x,y)$.

 \emph{(4) Reassemble the factorisation.}  Base change commutes with $\pi$,
 and $G$ is a polynomial in $\pi$, hence commutes with
$1+\pi$ and with base change.  Therefore, in $E(\overline{K})$,
\[
  (1+\pi)\bigl(G(\mathcal{Q})\bigr) = G\bigl(\mathcal{Q}\boxplus \pi \mathcal{Q}\bigr) = G\bigl((x,y)\bigr)
    = R = T_{3}\boxminus P,
\]
and by Lemma~\ref{lem:factor}, $(1+\pi^{k})\mathcal{Q} = (1+\pi)G(\mathcal{Q}) = T_{3}\boxminus P$, i.e.
\[
  \Phi_{k}(\mathcal{Q}) = \boxminus(1+\pi^{k})\mathcal{Q} \boxplus T_{3} = P .
\]

\emph{(5) Control the Frobenius defect.}  $\mathcal{Q}$ itself need not be
$K$-rational, but $\mathcal{Q}\boxplus \pi \mathcal{Q} = (x,y)$ is, hence is fixed by $\pi^{n}$.  Since
$\pi^{n}$ commutes with $1+\pi$,
\[
  (1+\pi)\bigl(\pi^{n}\mathcal{Q} \boxminus \mathcal{Q}\bigr) = \pi^{n}(\mathcal{Q}\boxplus \pi \mathcal{Q})\boxminus(\mathcal{Q}\boxplus \pi \mathcal{Q}) = O,
\]
so $\pi^{n}\mathcal{Q}\boxminus \mathcal{Q} \in \ker(1+\pi)$, which by
Proposition~\ref{prop:frobfacts}(2) is the set of the three points at infinity.
Thus $\pi^{n}\mathcal{Q} = \mathcal{Q} \boxplus P_{\alpha}$ for some $\alpha$ with $\alpha^{3}=1$: the
Frobenius twist hypothesis of Theorem~\ref{thm:twisted} holds.  Also
$W,T \neq 0$, since $\Phi_{k}(\mathcal{Q}) = P$ is not $3$-torsion.

\emph{(6) Descend the invariant cubes.}  Theorem~\ref{thm:twisted}, applied
over $\overline{K}$, gives $W^{2^{k}+1}+pT^{2^{k}+1} = q$.  Setting $w = W^{3}$,
$z = T^{3}$ we get $w+z=1$ and $w^{m}+pz^{m} = q$ as in
Corollary~\ref{cor:reduction-to-phi}.  Finally $w$ and $z$ lie in $K$: by
Lemma~\ref{lem:quotient} they equal $(x+1)/(x+y)$ and $(y+1)/(x+y)$
respectively, which are rational expressions in the $K$-rational coordinates
$x,y$.  No fixed-field descent theorem is needed --- the cube root $W$ was
chosen only \emph{after} the $K$-rational data $w,z$ had been written down.
\end{proof}

\section{Assembling the proof}\label{sec:assembly}

\begin{lemma}[Forcing]\label{lem:forcing}
Let $s$ be a nonempty finite index set, $Q>0$, $c \in \R$, and let
$N_{i} = c + Z_{i}/Q$ with $Z_{i}\ge 0$ for all $i \in s$.  If
$\frac{1}{|s|}\sum_{i\in s} N_{i} = c$, then $N_{i} = c$ and $Z_{i}=0$ for
every $i \in s$.
\end{lemma}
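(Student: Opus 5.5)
The argument is an elementary averaging one. We first convert the hypothesis on the mean into a statement about the sum of the corrections $Z_{i}$, and then use that a sum of nonnegative reals vanishes only when every summand does.

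\emph{Step 1: the mean of the $N_{i}$ in terms of the $Z_{i}$.} Since $s$ is nonempty, $|s| \ge 1$, so we may multiply the average hypothesis by $|s|$ to get $\sum_{i\in s} N_{i} = |s|\,c$. Inserting $N_{i} = c + Z_{i}/Q$ gives
\[
  \sum_{i\in s} N_{i} = |s|\,c + \frac{1}{Q}\sum_{i\in s} Z_{i}.
\]
Comparing the two expressions yields $\frac{1}{Q}\sum_{i\in s} Z_{i} = 0$. Because $Q > 0$, this means $\sum_{i\in s} Z_{i} = 0$.

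\emph{Step 2: each $Z_{i}$ vanishes.} Every $Z_{i}$ is nonnegative and their sum over the finite set $s$ is zero. Fix $j \in s$; then
\[
  0 \le Z_{j} \le \sum_{i\in s} Z_{i} = 0,
\]
so $Z_{j} = 0$. Substituting back gives $N_{j} = c + 0/Q = c$.

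The hypothesis $Z_{i} \ge 0$ presumes the $Z_{i}$ are real, and that is how we read the statement. In the application $Z_{i}$ will be $\ReX Z(\rho)$, which is real and nonnegative by Corollary~\ref{cor:positivity}, so this reading is consistent. There is no real obstacle here; the only care needed is to use $|s| \ge 1$ when clearing the denominator and $Q > 0$ when dividing.
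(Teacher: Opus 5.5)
Your proposal is correct and follows essentially the same route as the paper's proof. Both multiply the average hypothesis by $|s|$ and compare it with $\sum_{i\in s} N_i = |s|c + \bigl(\sum_{i\in s} Z_i\bigr)/Q$, which gives $\sum_{i\in s} Z_i = 0$; nonnegativity then forces each $Z_i = 0$ and hence $N_i = c$.
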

\begin{proof}
Summing the hypothesis, $\sum_{i}N_{i} = |s|c + \bigl(\sum_{i}Z_{i}\bigr)/Q$,
while the average condition gives $\sum_{i}N_{i} = |s|c$.  Hence
$\sum_{i}Z_{i}=0$ with all $Z_{i}\ge 0$, so each $Z_{i}=0$ and $N_{i}=c$.
\end{proof}

\begin{theorem}[Normalised slope form]\label{thm:slopeform}
Let $|K| = 2^{n}$, $1\le k<n$, $k$ odd, $\gcd(k,n)=1$, $2^{k}+1 = 3m$,
$\gcd(m,2^{n}-1)=1$, and $2|\Delta_{k}| = Q$.  Then
\[
  N(\rho) = \frac{Q^{2}}{8} \qquad \text{for every admissible } \rho .
\]
\end{theorem}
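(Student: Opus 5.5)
The plan is to combine the exact average of Theorem~\ref{thm:average} with a pointwise lower bound via the forcing Lemma~\ref{lem:forcing}. We use the index set $s = \mathcal S$ of admissible slopes, the constant $c = Q^{2}/8$, and $N_{\rho} = N(\rho)$. By Theorem~\ref{thm:walshcount}, $N(\rho) = Q^{2}/8 + Z(\rho)/Q$, where $Z(\rho)$ is real: $S$ takes real values since $\psi = \pm 1$. The set $\mathcal S$ is nonempty because $n \ge 2$ gives $Q \ge 4 > 2$. Theorem~\ref{thm:average} applies because $2|\Delta_{k}| = Q$, so the average of the $N(\rho)$ is $Q^{2}/8$. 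Once we know $Z(\rho) \ge 0$ for every $\rho \in \mathcal S$, Lemma~\ref{lem:forcing} gives $N(\rho) = Q^{2}/8$ for all admissible $\rho$.

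The real work is therefore the bound $Z(\rho) \ge 0$ for each admissible $\rho$. The hypotheses are exactly \eqref{eq:normalized}, so Theorem~\ref{thm:walshchar} supplies the all-character Walsh formula with $e = 2^{k}+1 = 3m$ and $\gcd(m,N)=1$. We can therefore fix $D$ with $mD \equiv 1 \pmod N$ and bring in the machinery of Section~\ref{sec:rootcount}.

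We split into two cases. If $\rho$ or $1+\rho$ is not a cube in $K$ (possible only when $n$ is even), Corollary~\ref{cor:noncube} gives $Z(\rho) = 0$. Otherwise we write $\rho = A^{3}$ and $1+\rho = B^{3}$. Admissibility gives $A \neq 0$ and $A^{3} = \rho \neq 1$, and in characteristic two $A^{3}+B^{3} = \rho + 1 + \rho = 1$. This is exactly the setting of Corollary~\ref{cor:positivity}, which yields $Z(\rho) \ge 0$ once $\mathrm{RootEqSolvable}(m,K)$ holds. That condition is provided by Theorem~\ref{thm:rootexistence}: its hypotheses $\gcd(k,n)=1$, $2^{k}+1 = 3m$ and, in the even case, $\gcd(m,2^{n}-1)=1$ are all part of the present assumptions.

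The step needing most care is bookkeeping rather than new mathematics. We must check that the pair $(A^{3},B^{3})$ in Corollary~\ref{cor:positivity} and Theorem~\ref{thm:phaseroot} really gives the phase triple sum of Definition~\ref{def:slopes} at $\sigma = 1+\rho$. We must also check that the cube roots $A,B$ can be chosen in $K$ itself. In the odd case this holds by Lemma~\ref{lem:mu3basic}(2); in the even case it is exactly the complement of the non-cube branch.

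With $Z(\rho) \ge 0$ established on all of $\mathcal S$, the forcing lemma concludes the proof.
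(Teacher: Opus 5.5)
Your proposal is correct and follows the paper's proof essentially step for step: Corollary~\ref{cor:noncube} handles the non-cube branch, and Corollary~\ref{cor:positivity}, fed with $\mathrm{RootEqSolvable}(m,K)$ from Theorem~\ref{thm:rootexistence}, handles the cube branch; the resulting bound $Z(\rho)\ge 0$ is then combined with Theorem~\ref{thm:walshcount}, the exact average of Theorem~\ref{thm:average}, and Lemma~\ref{lem:forcing}. Your remarks that $Z(\rho)$ is real and that $A,B$ can be chosen inside $K$ are the same small bookkeeping points the paper's argument relies on.
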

\begin{proof}
Let $D$ be an inverse of $m$ modulo $N = |\Kx|$; 
note $m \neq 0$ and $\mathcal S \neq \emptyset$ because
$n \ge 2$.  Theorem~\ref{thm:phase}
gives the phase formula, and Theorem~\ref{thm:walshchar} converts it into the
all-character Walsh formula for $S$, the Walsh transform of $\Delta_{k}$.

\emph{Pointwise lower bound.}  Let $\rho$ be admissible.  If $\rho$ or
$1+\rho$ is not a cube, then $Z(\rho) = 0$ by Corollary~\ref{cor:noncube}.
Otherwise write $\rho = A^{3}$, $1+\rho = B^{3}$ with $A,B \in K$; then
$A \neq 0$ (as $\rho \neq 0$), $A^{3}+B^{3} = \rho + 1 + \rho = 1$, and
$A^{3} = \rho \neq 1$.  Theorem~\ref{thm:rootexistence} supplies
$\mathrm{RootEqSolvable}(m,K)$ --- through the odd branch if $n$ is odd and
through the even branch if $n$ is even --- so Corollary~\ref{cor:positivity}
gives $\ReX Z(\rho)\ge 0$.

\emph{Conclusion.}  By Theorem~\ref{thm:walshcount},
$N(\rho) = Q^{2}/8 + \ReX Z(\rho)/Q$ for every admissible $\rho$, and by
Theorem~\ref{thm:average} the average of $N$ over $\mathcal S$ is exactly
$Q^{2}/8$.  Lemma~\ref{lem:forcing}, with $c = Q^{2}/8$ and $Z_{\rho} =
\ReX Z(\rho)$, yields $N(\rho) = Q^{2}/8$ for every admissible $\rho$ (and, as a
by-product, $Z(\rho) = 0$ throughout: the Fourier correction vanishes
identically).
\end{proof}

\begin{proof}[Proof of Theorem~\ref{thm:main}]
Let $K$, $n$, $k$, $v_{1}$, $v_{2}$ be as in the statement.  By
Proposition~\ref{prop:reduction}, $n \ge 2$ and we may replace $k$ by
$r = k\bmod n$, which satisfies $1\le r<n$ and $\gcd(r,n)=1$, without changing
$N_{k}(v_{1},v_{2})$.

By Result~\ref{res:halfsize}, $2|\Delta_{r}| = Q$.  By
Proposition~\ref{prop:normparam} choose $k_{0}\in\{r,n-r\}$ odd, coprime to
$n$, with $2^{k_{0}}+1 = 3m$ and $\gcd(m,2^{n}-1)=1$; note $1 \le k_{0}<n$.  By
Proposition~\ref{prop:transport} (used only if $k_{0} = n-r$) there are
distinct nonzero $w_{1},w_{2}$ with
$N_{r}(v_{1},v_{2}) = N_{k_{0}}(w_{1},w_{2})$, and by
Result~\ref{res:halfsize} again $2|\Delta_{k_{0}}| = Q$.

Theorem~\ref{thm:slopeform} applies at the parameter $k_{0}$ and gives
$N(\rho) = Q^{2}/8$ for every admissible slope.  By Lemma~\ref{lem:coefslope},
$N_{k_{0}}(w_{1},w_{2}) = Q^{2}/8$, and by \eqref{eq:mainterm} this equals
$2^{2n-3}$.  Chaining the equalities,
\[
  N_{k}(v_{1},v_{2}) = N_{r}(v_{1},v_{2}) = N_{k_{0}}(w_{1},w_{2}) = 2^{2n-3},
\]
which is the assertion.
\end{proof}

\begin{remark}[What the proof also shows]
The proof yields more than the count.  The forcing step shows that the Fourier
correction $Z(\rho)$ vanishes for every admissible slope, and, through
Theorem~\ref{thm:phaseroot}, that for even $n$ and every admissible slope
$\rho = A^{3}$ with $1+\rho = B^{3}$ the twisted root counts satisfy
$\sum_{u,v\in\muthree}R_{u,v}(A,B) = 9$ exactly, i.e.\ each twisted fibre
contains exactly one root; for odd $n$ the corresponding statement is
$R_{1,1}(A,B) = 1$.  The geometry alone gives ``at least one''; exactness comes
from the conserved average.
\end{remark}


\section*{Acknowledgements}
This research was supported by the National Research, Development and Innovation Fund of Hungary under grant SNN 152582. We gratefully acknowledge the support of Resoloon Technologies Ltd, which enabled the first author's access to Claude Fable~5. Special thanks go to \'Abris Nagy and to Claude Carlet for valuable comments and suggestions. 

\section*{Declaration of generative AI and AI-assisted technologies in the manuscript preparation process}
During the research and preparation of this work, the authors used several AI
systems under their guidance and through sustained mathematical discussions.
Claude Fable~5 (Anthropic) was used in the earlier stage of the project reported
in the Nagy--Vajda paper~\cite{NagyVajda}; Claude Opus~5 (Anthropic) and
ChatGPT~6 Pro (OpenAI) were used more extensively in developing and polishing
the present proof and manuscript.  The authors reviewed and edited the generated
material and take full responsibility for the mathematical claims, references,
and final content of the paper.  The results have been formalised in Lean~4 with
AI assistance, including Aristotle (Harmonic), and verified by the Lean kernel;
see \cite{palomar-2026-09-04-000005-v1}.

\printbibliography

\end{document}